\numberwithin{equation}{section}
\setlist[itemize]{leftmargin=1.6em}
\newtheorem{theorem}{Theorem}[section]
\newtheorem{corollary}[theorem]{Corollary}
\newtheorem{lemma}[theorem]{Lemma}
\newtheorem{proposition}[theorem]{Proposition}
\DeclareRobustCommand{\qbinom}{\genfrac{\lbrack}{\rbrack}{0pt}{}}
\def\multiset#1#2{\ensuremath{\left(\kern-.3em\left(\genfrac{}{}{0pt}{}{#1}{#2}\right)\kern-.3em\right)}}
\begin{document}

\title{Quantum integer-valued polynomials}
\author{Nate Harman}
\author{Sam Hopkins}
\affil{Department of Mathematics, MIT}
\maketitle
\begin{abstract}
We define a $q$-deformation of the classical ring of integer-valued polynomials which we call the ring of quantum integer-valued polynomials. We show that this ring has a remarkable combinatorial structure and enjoys many positivity properties: for instance, the structure constants for this ring with respect to its basis of $q$-binomial coefficient polynomials belong to~$\mathbb{N}[q]$. We then classify all maps from this ring into a field,  extending a known classification in the classical case where $q=1$.

\end{abstract}

\begin{section}*{Introduction}

Throughout this paper $x$ and $q$ are commuting indeterminates (which we will sometimes specialize) whereas other lowercase letters like $n$, $m$, $i$, $j$, $k$, and~$d$ are numbers and $p$ is always a prime number. We use $\mathbb{N} := \{0,1,2,\ldots\}$ for the set of natural numbers, $\mathbb{Z}$ for the ring of integers, $\mathbb{Q}$ for the field of rational numbers, $\mathbb{F}_{p^m}$ for the finite field of cardinality~$p^m$, and $\mathbb{Z}_p$ the ring of $p$-adic integers. If $R$ is a ring, then~$R[x]$ denotes the univariate polynomial ring in~$x$ over~$R$. Polynomials will be denoted by uppercase letters like~$P(x)$. If $\mathbf{k}$ is a field, then~$\mathbf{k}(q)$ denotes the field of rational expressions in~$q$ over $\mathbf{k}$, the fraction field of~$\mathbf{k}[q]$. For $a,b \in \mathbb{Z}$ set $[a,b] := \{a,a+1,a+2,\ldots,b\}$, which is~$\varnothing$ if~$a > b$.

A polynomial is integer-valued if it takes integer values at all integers. In the context of interpolation theory, integer-valued polynomials have been studied at least since the work of Isaac Newton in the 17th century. But the theory of integer-valued polynomials was first systematically developed in two 1919 papers of P\'{o}lya~\cite{polya1915ganzwertige} and Ostrowski~\cite{ostrowski1919ganzwertige}. Their focus was on finding so-called ``regular bases'' (i.e., bases consisting of one polynomial of each degree) for $\mathbb{Z}$-algebras of integer-valued polynomials with coefficients in various number fields~$K$. When~$K=\mathbb{Q}$, the classical case, a regular basis for the ring of integer-valued polynomials is given by the binomial coefficient polynomials. For more background on integer-valued polynomials, consult the book of Cahen and Chabert~\cite{cahen1997integer}.

In this paper we investigate a $q$-deformation of the classical ring of integer-valued polynomials. Let us briefly summarize our results here. Recall the $q$-numbers $[n]_q := (q^n-1)/(q-1)$, the $q$-factorials $[n]_q! := [n]_q \cdot [n-1]_q \cdots [1]_q$, and the $q$-binomial coefficients~$\qbinom{n}{k}_q := \frac{[n]_q!}{[n-k]_q![k]_q!}$. In this paper we study the ring~$\mathcal{R}_q$ of all polynomials $P(x) \in \mathbb{Q}(q)[x]$ with $P([n]_q) \in \mathbb{Z}[q,q^{-1}]$ for all~$n \in \mathbb{Z}$. We show that this ring has a basis as a~$\mathbb{Z}[q,q^{-1}]$-module consisting of the $q$-binomial coefficient polynomials~$\qbinom{x}{k}$ for $k \in \mathbb{N}$ (Propositions~\ref{prop:qbinom} and~\ref{prop:localization}). These~$\qbinom{x}{k}$ are the unique polynomials in $\mathbb{Q}(q)[x]$ with~\mbox{$\qbinom{[n]_q}{k} = \qbinom{n}{k}_q$} for all~$n \in \mathbb{N}$. From~$\mathcal{R}_q$ we recover the classical ring $\mathcal{R}$ of integer-valued polynomials with rational coefficients and its basis of binomial coefficient polynomials~$\binom{x}{k}$ by specializing~$q:=1$. It is well-known that $\mathcal{R}$ is not Noetherian (see~\cite[pg.~xvii]{cahen1997integer}). Thus~$\mathcal{R}_q$ is also non-Noetherian. Nevertheless the rings~$\mathcal{R}_q$ and~$\mathcal{R}$ have remarkable combinatorial structures and positivity properties. For starters, these rings come with the following maps:
\begin{itemize}
\item a shift operator $S\colon \mathcal{R}_q \to \mathcal{R}_q$ defined by $S(x):=qx+1$ with inverse $S^{-1}\colon \mathcal{R}_q \to \mathcal{R}_q$ defined by $S^{-1}(x):=q^{-1}(x-1)$ (Section~\ref{sec:shift});
\item a bar involution $\overline{\raisebox{0.5em}{\;\;}}\colon \mathcal{R}_q \to \mathcal{R}_q$ defined by $\overline{q} := q^{-1}$ and $\overline{x} := -qx$ (Section~\ref{sec:involution});
\item a Frobenius map $\Psi_p\colon \mathcal{R} \otimes_{\mathbb{Z}} \mathbb{F}_p \to \mathcal{R} \otimes_{\mathbb{Z}} \mathbb{F}_p$ defined by $\Psi_p(\binom{x}{k}) := \Psi_p(\binom{x}{pk})$ for all primes $p$ (Section~\ref{sec:frobenius});
\item a quantum Frobenius map $\Psi_d\colon \mathcal{R} \to \mathcal{R}_q/\Phi_d(q)$ defined by~\mbox{$\Psi_d(\binom{x}{k}) := \qbinom{x}{dk}$} for all integers~$d \geq 1$, where $\Phi_d$ is the $d$th cyclotomic polynomial (Section~\ref{sec:frobenius}).
\end{itemize}
And these maps have the following relations between them:
\begin{itemize}
\item $S\overline{P(x)} = \overline{S^{-1}P(x)}$ for all $P(x) \in \mathcal{R}_q$ (Proposition~\ref{prop:otherinvs});
\item $\Psi_p(S\overline{P(x)}) = S\overline{\Psi_p(P(x))}$ for all primes $p$, $P(x) \in \mathcal{R} \otimes_{\mathbb{Z}} \mathbb{F}_p$ (Proposition~\ref{prop:frobcommute});
\item $\Psi_d(S\overline{P(x)}) = S\overline{\Psi_d(P(x))}$ for all $d \geq 1$, $P(x) \in \mathcal{R}$ (Proposition~\ref{prop:qfrobcommute}).
\end{itemize}
We also have the following positivity properties for $\mathcal{R}_q$: for all~$i,j,m \in \mathbb{N}$,
\begin{itemize}
\item with $\qbinom{x}{i}\qbinom{x}{j} = \sum_{k} \alpha_{i,j,k}(q) \qbinom{x}{k}$, $\alpha_{i,j,k}(q) \in \mathbb{N}[q]$ for all $k$ (Theorem~\ref{thm:qbinommult});
\item with $\overline{\qbinom{x}{i}} \, \overline{ \qbinom{x}{j}} = \sum_{k} \overline{\alpha}_{i,j,k}(q)\overline{ \qbinom{x}{k}}$, $\overline{\alpha}_{i,j,k}(q) \in \mathbb{N}[q^{-1}]$ for all $k$ (Corollary~\ref{cor:negqbinom});
\item with $S^{m}\qbinom{x}{i} = \sum_{k} \beta_{m,i,k}(q) \qbinom{x}{k}$, $\beta_{m,i,k}(q) \in \mathbb{N}[q]$ for all $k$ (Equation~(\ref{eq:qbinommultishift}));
\item with $S^{-m}\overline{ \qbinom{x}{i}} = \sum_{k} \overline{\beta}_{m,i,k}(q) \overline{ \qbinom{x}{k}}$, $\overline{\beta}_{m,i,k}(q) \in \mathbb{N}[q^{-1}]$ for all $k$ (Equation~(\ref{eq:negqbinommultishift}));
\item with $\overline{ \qbinom{x}{i}} = \sum_{k} \gamma_{i,k}(q)\qbinom{x}{k}$, $\gamma_{i,k}(q) \in (-1)^i\mathbb{N}[q]$ for all $k$ (Proposition~\ref{prop:invconsts});
\item with $\qbinom{x}{i} = \sum_{k} \overline{\gamma}_{i,k}(q)\overline{ \qbinom{x}{k}}$, $\overline{\gamma}_{i,k}(q) \in (-1)^i\mathbb{N}[q^{-1}]$ for all $k$ (Proposition~\ref{prop:invconsts}).
\end{itemize}
Moreover, we offer simple, combinatorial formulas for all the coefficients above. Finally, using the tools we develop we classify all ring homomorphisms from~$\mathcal{R}_q$ into a field (Theorem~\ref{thm:mapstofields}). In general there is no reason to expect to be able to classify maps from a non-Noetherian commutative ring into a field, so this classification shows that indeed $\mathcal{R}_q$ has a very special structure. Especially important for this classification of maps from~$\mathcal{R}_q$ into a field is the aforementioned quantum Frobenius map. In turn, the construction of this quantum Frobenius map relies on a $q$-analog of Lucas' celebrated theorem~\cite{lucas1878theorie} due (we believe) to Sved~\cite{sved1988divisibility}. In the last section of the paper, Section~\ref{sec:future}, we discuss some open questions and future directions in the investigation of the ring $\mathcal{R}_q$.

In a recent paper by the first author \cite{harman2015stability} the ring $\mathcal{R}$ played an important role in understanding the asymptotic behavior of the modular representation theory of symmetric groups.  Part of the motivation for this paper was to understand what ring plays the role of $\mathcal{R}$ in the asymptotic behavior of Iwahori-Hecke algebras in type $A$. This direction will be addressed in more detail in an upcoming paper by the first author.

\noindent {\bf Acknowledgments}: We thank Fedor Petrov~\cite{petrov2015qint} for directing us to the work of Bhargava~\cite{bhargava1997p}, and for pointing out that the classical method of polynomial interpolation works to prove Proposition~\ref{prop:qbinom}.

\end{section}

\begin{section}{Quantum integer-valued polynomials} \label{sec:intvalpolys}
A polynomial $P(x) \in \mathbb{Q}[x]$ is \emph{integer-valued} if~$P(n) \in \mathbb{Z}$ for all $n \in \mathbb{N}$.  Let $\mathcal{R}$ denote the ring of such polynomials. We have the following proposition about the structure of $\mathcal{R}$, which in fact was essentially known to Newton.

\begin{proposition}[P\'olya 1919~\cite{polya1915ganzwertige}] \label{prop:binom}
$\mathcal{R}$ is freely generated as an abelian group by the binomial coefficient polynomials  $\binom{x}{k}$ for $k \in \mathbb{N}$ defined by
\[\binom{x}{k} := \frac{x(x-1)\dots(x-k+1)}{k!} \qquad \textrm{if $k \geq 1$},\]
with $\binom{x}{0} := 1$.
\end{proposition}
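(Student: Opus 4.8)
The plan is to show two things: first, that each $\binom{x}{k}$ is integer-valued, and second, that these polynomials span $\mathcal{R}$ over $\mathbb{Z}$ with no relations. The linear independence over $\mathbb{Z}$ (indeed over $\mathbb{Q}$) is immediate since $\binom{x}{k}$ has degree exactly $k$, so $\{\binom{x}{k} : 0 \le k \le d\}$ is a $\mathbb{Q}$-basis for the polynomials of degree $\le d$; hence the $\binom{x}{k}$ are $\mathbb{Z}$-linearly independent. The substantive content is the claim that every integer-valued polynomial is a $\mathbb{Z}$-linear combination of the $\binom{x}{k}$.

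For integrality of $\binom{x}{k}$ itself, I would argue that for $n \in \mathbb{N}$ the value $\binom{n}{k}$ is either $0$ (when $0 \le n < k$) or the usual binomial coefficient $\frac{n!}{k!(n-k)!} \in \mathbb{Z}$ (when $n \ge k$), and for negative integers one can use the identity $\binom{-n}{k} = (-1)^k \binom{n+k-1}{k}$, or simply note that a polynomial taking integer values on all of $\mathbb{N}$ automatically takes integer values on all of $\mathbb{Z}$ (since any $d+1$ consecutive integers determine the polynomial via finite differences). Then for the spanning claim, the key tool is the finite difference operator $\Delta P(x) := P(x+1) - P(x)$, which satisfies $\Delta \binom{x}{k} = \binom{x}{k-1}$. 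Given $P(x) \in \mathcal{R}$ of degree $d$, I would induct on $d$: the leading-order analysis shows $P - c\binom{x}{d}$ has degree $< d$ for a suitable $c \in \mathbb{Q}$, but to see $c \in \mathbb{Z}$ and to keep everything integral, it is cleanest to use the Newton forward-difference formula $P(x) = \sum_{k=0}^{d} (\Delta^k P)(0) \binom{x}{k}$, and observe that each coefficient $(\Delta^k P)(0)$ is an integer because $\Delta^k P$ is an integer combination of shifts of $P$, each of which is integer-valued at $0$.

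The main obstacle — really the only non-formal point — is verifying the Newton forward-difference identity $P(x) = \sum_{k=0}^{d} (\Delta^k P)(0)\,\binom{x}{k}$ for any polynomial $P$ of degree $\le d$. I would prove this by noting both sides are polynomials of degree $\le d$ and checking they agree at the $d+1$ points $x = 0, 1, \ldots, d$: at $x = m$ the right side is $\sum_{k=0}^{m} (\Delta^k P)(0) \binom{m}{k}$, and one shows by a short induction (or directly, using $\Delta^k P(0) = \sum_{j=0}^k (-1)^{k-j}\binom{k}{j} P(j)$ together with the Vandermonde-type identity $\sum_k (-1)^{k-j}\binom{m}{k}\binom{k}{j} = [m=j]$) that this telescopes to $P(m)$. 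Alternatively, and more slickly, the identity $\Delta \binom{x}{k} = \binom{x}{k-1}$ means that on the $\mathbb{Q}$-vector space of polynomials of degree $\le d$ the operator $\Delta$ is nilpotent and the $\binom{x}{k}$ form a "Jordan-type" basis, from which the reconstruction formula is the standard Taylor-expansion argument for the nilpotent operator $\Delta$. Once this identity is in hand, the proposition follows: writing $P = \sum_k a_k \binom{x}{k}$ with $a_k = (\Delta^k P)(0) \in \mathbb{Z}$ shows the $\binom{x}{k}$ span $\mathcal{R}$, and linear independence was already noted, so they freely generate $\mathcal{R}$ as an abelian group.
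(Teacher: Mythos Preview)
Your proof is correct, and it is essentially the same argument the paper has in mind: the paper does not prove Proposition~\ref{prop:binom} directly but instead proves its $q$-analog (Proposition~\ref{prop:qbinom}) by an iterative interpolation $P_k(x) = P_{k-1}(x) + (P(k)-P_{k-1}(k))\binom{x}{k}$, and the coefficients $P(k)-P_{k-1}(k)$ appearing there are precisely your $(\Delta^k P)(0)$. The only cosmetic difference is that you invoke the finite-difference operator $\Delta$ and the Newton forward-difference formula explicitly, whereas the paper packages the same computation as a step-by-step Lagrange-style interpolation; both reduce to agreement of two degree-$d$ polynomials at the $d+1$ points $0,1,\ldots,d$.
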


The key observation leading to this paper is that this ring admits a remarkable $q$-deformation called the ring of \emph{quantum integer-valued} polynomials. Recall the \emph{$q$-numbers} defined by $[n]_q := (q^n-1)/(q-1) =  (1 + q  + \dots +q^{n-1})$ for~$n \in \mathbb{N}$, with $[0]_q:=0$ by convention.  With these we may also define the \emph{$q$-factorials} $[n]_q! := [n]_q[n-1]_q\dots[1]_q$ for $n \in \mathbb{N}$, with the convention~$[0]_q! := 1$. For $n \in \mathbb{N}$ and $k \in \mathbb{Z}$ we define the \emph{$q$-binomial coefficients} by~$\qbinom{n}{k}_q := \frac{[n]_q!}{[n-k]_q! [k]_q!}$ when~$0 \leq k \leq n$, and~$\qbinom{n}{k}_q := 0$ if $k > n$ or~$k < 0$. Note the symmetry~$\qbinom{n}{k}_q = \qbinom{n}{n-k}_q$. Also note that~$\qbinom{n}{k}_q$ is a polynomial in $q$. In fact, $\qbinom{n}{k}_q \in \mathbb{N}[q]$, which follows from Lemma~\ref{lem:combint} below.

Now we define our main object of study, a $q$-deformation~$\mathcal{R}^{+}_q$ of $\mathcal{R}$:
\[ \mathcal{R}^{+}_q := \{P(x) \in \mathbb{Q}(q)[x]\colon P([n]_q) \in\mathbb{Z}[q] \textrm{ for all } n \in \mathbb{N}\}.\]
(The plus sign superscript will be explained in Section~\ref{sec:relrings} where we define a slightly larger ring $\mathcal{R}_q$ of which~$\mathcal{R}^{+}_q$ can be seen as the ``positive part.'') Note that $\mathcal{R}_{q}^{+}$ is naturally a $\mathbb{Z}[q]$-algebra. We have the following $q$-analog of Proposition~\ref{prop:binom}:

\begin{proposition}\label{prop:qbinom}
$\mathcal{R}^{+}_q$ is freely generated as a $\mathbb{Z}[q]$-module by the $q$-binomial coefficient polynomials $\qbinom{x}{k}$ for $k \in \mathbb{N}$ defined by
\[\qbinom{x}{k} := \frac{x(x-[1]_q)\dots(x-[k-1]_q)}{q^{\binom{k}{2}}[k]_q!} \qquad \textrm{if $k \geq 1$},\]
with $\qbinom{x}{0} := 1$. These polynomials satisfy $\qbinom{[n]_q}{k} = \qbinom{n}{k}_q$.
\end{proposition}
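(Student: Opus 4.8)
The plan is to run the classical polynomial-interpolation argument in the $q$-setting, where it works essentially verbatim (as pointed out to us by Petrov); the key structural input is the lower-unitriangularity of the array $\bigl(\qbinom{m}{k}_q\bigr)_{m,k \geq 0}$.

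I would begin with the evaluation identity $\qbinom{[n]_q}{k} = \qbinom{n}{k}_q$, since everything else rests on it. Using the elementary $q$-identity $[n]_q - [j]_q = q^{j}[n-j]_q$, substituting $x = [n]_q$ into the numerator $x(x-[1]_q)\cdots(x-[k-1]_q)$ of $\qbinom{x}{k}$ makes the product telescope to $q^{0+1+\cdots+(k-1)}\,[n]_q[n-1]_q\cdots[n-k+1]_q = q^{\binom{k}{2}}\,[n]_q!/[n-k]_q!$ when $n \geq k$; dividing by $q^{\binom{k}{2}}[k]_q!$ yields $\qbinom{n}{k}_q$ exactly. When $n < k$, the factor indexed by $j = n$ equals $[n]_q - [n]_q = 0$, so $\qbinom{[n]_q}{k} = 0 = \qbinom{n}{k}_q$. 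In particular this shows $\qbinom{x}{k} \in \mathcal{R}^{+}_q$ for all $k$.

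For the module statement, observe that $\qbinom{x}{k}$ is a polynomial of degree $k$ in $x$ with leading coefficient $1/(q^{\binom{k}{2}}[k]_q!) \in \mathbb{Q}(q) \setminus \{0\}$, so $\qbinom{x}{0}, \ldots, \qbinom{x}{d}$ is a $\mathbb{Q}(q)$-basis of the space of polynomials of degree $\leq d$ in $\mathbb{Q}(q)[x]$. Hence the $\qbinom{x}{k}$ are $\mathbb{Z}[q]$-linearly independent, and every $P(x) \in \mathbb{Q}(q)[x]$ admits a unique expansion $P(x) = \sum_{k=0}^{d} c_k(q)\,\qbinom{x}{k}$ with $c_k(q) \in \mathbb{Q}(q)$. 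It then suffices to prove that $P(x) \in \mathcal{R}^{+}_q$ if and only if $c_k(q) \in \mathbb{Z}[q]$ for all $k$. The direction ``$\Leftarrow$'' is immediate from $P([n]_q) = \sum_k c_k(q)\,\qbinom{n}{k}_q$ together with $\qbinom{n}{k}_q \in \mathbb{N}[q]$ (Lemma~\ref{lem:combint}). For ``$\Rightarrow$'' I would induct on $k$: since $\qbinom{m}{k}_q = 0$ for $k > m$ and $\qbinom{m}{m}_q = 1$, evaluating at $x = [m]_q$ gives $P([m]_q) = c_m(q) + \sum_{k < m} c_k(q)\,\qbinom{m}{k}_q$, so assuming $c_0(q), \ldots, c_{m-1}(q) \in \mathbb{Z}[q]$ forces $c_m(q) = P([m]_q) - \sum_{k<m} c_k(q)\,\qbinom{m}{k}_q \in \mathbb{Z}[q]$; the base case $m = 0$ reads $c_0(q) = P([0]_q) = P(0) \in \mathbb{Z}[q]$.

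I do not anticipate a genuine obstacle here: the whole argument is the $q$-deformation of Newton's finite-difference interpolation. The only steps that require a moment's care are the telescoping computation behind the evaluation identity and the observation that $\bigl(\qbinom{m}{k}_q\bigr)_{m,k}$ is lower triangular with $1$'s on the diagonal --- the latter being exactly what drives the recursion recovering the $c_k(q)$ and simultaneously shows that the $\qbinom{x}{k}$ span $\mathcal{R}^{+}_q$ over $\mathbb{Z}[q]$.
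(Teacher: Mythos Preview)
Your proof is correct and is essentially the same polynomial-interpolation argument the paper gives: the paper phrases the spanning step as building successive approximants $P_k(x) := P_{k-1}(x) + (P([k]_q)-P_{k-1}([k]_q))\qbinom{x}{k}$, while you phrase it as solving the lower-unitriangular system $P([m]_q)=c_m(q)+\sum_{k<m}c_k(q)\qbinom{m}{k}_q$ for the $c_m(q)$, and these are the same recursion. Your treatment of the evaluation identity is more explicit than the paper's (which simply calls it a ``straightforward calculation''), but nothing in the overall strategy differs.
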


\begin{proof}
This proposition falls into a general framework set up by Bhargava; it can be seen as an instance of~\cite[Theorem 14]{bhargava1997p}. It also is essentially the same as~\cite[Chapter~II, Exercise~15]{cahen1997integer}, which in turn cites~\cite{gramain1990fonctions}. But let us give a self-contained proof based on a well-known proof of Proposition~\ref{prop:binom} using polynomial interpolation.

Verifying that when $x := [n]_q$ the $q$-binomial coefficient polynomials evaluate to the $q$-binomial coefficients $\qbinom{n}{k}_q$, and hence that these polynomials are actually in~$\mathcal{R}^{+}_q$, is a straightforward calculation.  Also, the $\qbinom{x}{k}$ are linearly independent just because of degree considerations. What remains is to check that everything in $\mathcal{R}^{+}_q$ is a $\mathbb{Z}[q]$-linear combination the $q$-binomial coefficient polynomials.

Let $P(x) \in \mathcal{R}^{+}_q$. We will construct polynomials $P_i(x)$ in the $\mathbb{Z}[q]$-span of the $q$-binomial coefficient polynomials for~$i\in \mathbb{N}$ such that $P_k([j]_q) = P([j]_q)$ for~$j\in[0,k]$ and such that~$P_i(x)$ has degree at most $k$.  The construction is given inductively as follows:
\begin{align*}
P_0(x) &:= P(0); \\
P_k(x) &:= P_{k-1}(x) + (P([k]_q) - P_{k-1}([k]_q))\qbinom{x}{k} \qquad \textrm{if $k \geq 1$}.
\end{align*}
By supposition~$P([k]_q)\in \mathbb{Z}[q]$ and so~$P([k]_q) - P_{k-1}([k]_q) \in \mathbb{Z}[q]$ for all $k \in \mathbb{N}$. Also, since~$\qbinom{[k]_q}{k} = \qbinom{k}{k}_q = 1$ and $\qbinom{[n]_q}{k} = \qbinom{n}{k}_q = 0$ if~$n < k$, for all $k\in\mathbb{N}$ we have that~$P(x)-P_k(x)$ vanishes at the points~$x=[0]_q,[1]_q,\ldots,[k]_q$. So if $P(x)$ has degree~$d$ then $P(x) - P_d(x)$ is a polynomial of degree at most~$d$ vanishing at the $(d+1)$ points $x= [0]_q, [1]_q, \dots [d]_q$ and is thus the zero polynomial. We conclude that~$P(x) = P_d(x)$ and so have successfully expressed $P(x)$ as a~$\mathbb{Z}[q]$-linear combination the $q$-binomial coefficient polynomials, as desired.
\end{proof}

\end{section}

\begin{section}{Combinatorial interpretations and polynomial interpolations}

In this section we will review some well-known combinatorial interpretations for the ($q$-)binomial coefficients. In order to do that we need to review some notation for partitions. Recall that a \emph{partition $\lambda = (\lambda_1,\lambda_2,\lambda_3,\ldots)$} is an infinite nonincreasing sequence of nonnegative integers that is eventually zero. We write $\lambda = (\lambda_1,\ldots,\lambda_k)$ to mean that $\lambda_j = 0$ for $j > k$. The \emph{size $|\lambda|$ of $\lambda$} is~$|\lambda| := \sum_{i=0}^{\infty} \lambda_i$. The \emph{length $\ell(\lambda)$ of $\lambda$} is~$\ell(\lambda) := \mathrm{min}\{i\colon i \in \mathbb{N}, \lambda_{i+1}=0\}$. There is a unique partition~$\lambda$ with~$|\lambda| = 0$ (which also has $\ell(\lambda) = 0$) called the \emph{empty partition}, and it is denoted by~$\varnothing$.  Associated to a partition $\lambda$ is its \emph{Young diagram}, which is the topleft-aligned collection of boxes having $\lambda_i$ boxes in row~$i$. For example, the Young diagram of $\lambda = (4,4,2,1)$ is: {\tiny \[\ydiagram{4,4,2,1}\]}Partitions are partially-ordered by containment of Young diagrams: for partitions $\lambda$ and $\mu$ we write $\lambda \subseteq \mu$ to mean $\lambda_i \leq \mu_i$ for all~$i$. The \emph{conjugate partition} of~$\lambda$, denoted $\lambda' = (\lambda'_1,\lambda'_2,\ldots)$, is the partition whose Young diagram is the transpose of the Young diagram of $\lambda$. Equivalently,~$|\{j\colon \lambda'_j = i\}| = \lambda_i - \lambda_{i+1}$ for all $i \geq 1$. Finally, for $m,k \in \mathbb{N}$ the \emph{rectangular partition $m^k$} is the partition~$m^k := (\overbrace{m,m,\ldots,m}^{k})$, which is $\varnothing$ if either $m$ or $k$ are equal to zero.

\begin{lemma} \label{lem:combint}
We have the following interpretations of~$\binom{n}{k}$ and~$\qbinom{n}{k}_q$ for $n,k \in \mathbb{N}$:
\begin{enumerate}
\item \textbf{(Classical)} $\binom{n}{k}$ is the number of $k$-element subsets of $\{1,2,\ldots,n\}$.

\item \textbf{(Quantum)} $\qbinom{n}{k}_q  = \sum_{\lambda \subseteq (n-k)^{k}} q^{|\lambda|}$, where this sum is $0$ if $k > n$.

\item \textbf{(Finite Field)} $\qbinom{n}{k}_q$ evaluated at a prime power $q := p^m$ is the number of $k$-dimensional subspaces of $\mathbb{F}_{p^m}^n$.
\end{enumerate}
\end{lemma}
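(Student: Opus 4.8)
The plan is to prove all three interpretations of Lemma~\ref{lem:combint} essentially in parallel, with the quantum statement (2) as the heart of the matter, since the classical statement (1) is the specialization $q := 1$ of (2) and the finite-field statement (3) is a classical counting argument that ultimately reduces to the same $q$-identity. First I would establish (2) by verifying that the generating function $G_{n,k}(q) := \sum_{\lambda \subseteq (n-k)^k} q^{|\lambda|}$ satisfies the defining recurrence of the $q$-binomial coefficients. Concretely, I would record the base cases $G_{n,0}(q) = G_{n,n}(q) = 1$ (the only partition fitting inside a $0$-width or $0$-height rectangle is $\varnothing$) and $G_{n,k}(q) = 0$ for $k > n$, and then prove the $q$-Pascal recurrence
\[
G_{n,k}(q) = G_{n-1,k-1}(q) + q^{k}\, G_{n-1,k}(q)
\]
by splitting the partitions $\lambda \subseteq (n-k)^k$ according to whether the first column of the ambient $k \times (n-k)$ rectangle is completely filled (i.e. $\lambda_k \geq 1$) or not. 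If the first column is full, removing it gives a bijection with partitions inside $(n-k-1)^k$ that adds $k$ to the size, yielding the $q^k G_{n-1,k}$ term; if not, then $\lambda_k = 0$ so $\lambda$ has at most $k-1$ parts and fits inside $(n-k)^{k-1} = ((n-1)-(k-1))^{k-1}$, giving $G_{n-1,k-1}$. Since $\qbinom{n}{k}_q$ satisfies the identical recurrence and initial conditions (a standard fact, easily checked from the definition $\qbinom{n}{k}_q = [n]_q!/([n-k]_q![k]_q!)$ together with $[n]_q = q^k[n-k]_q + [k]_q$), induction on $n$ gives $\qbinom{n}{k}_q = G_{n,k}(q)$, and in particular $\qbinom{n}{k}_q \in \mathbb{N}[q]$.

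Having (2), statement (1) follows immediately: setting $q := 1$ collapses $\sum_{\lambda \subseteq (n-k)^k} q^{|\lambda|}$ to the number of partitions fitting inside the $(n-k) \times k$ rectangle, which is well known (and easy) to equal $\binom{n}{k}$ — a lattice-path argument, or the direct bijection sending such a $\lambda$ to the $k$-subset $\{\lambda_k + 1, \lambda_{k-1} + 2, \ldots, \lambda_1 + k\}$ of $\{1, \ldots, n\}$, makes this transparent. (Alternatively one can just cite Proposition~\ref{prop:binom} or the elementary fact that $\binom{n}{k}$ counts $k$-subsets, but I would keep the bijective description since it foreshadows the refined statement.) For (3), I would count pairs: a $k$-dimensional subspace $V \subseteq \mathbb{F}_{q}^n$ (with $q = p^m$) has a unique reduced row echelon basis, and the number of such echelon forms with a prescribed set of pivot columns $1 \leq c_1 < \cdots < c_k \leq n$ is exactly $q^{N}$ where $N$ is the number of free entries, which one computes to be $N = \sum_{j=1}^k (c_j - j) \cdot (\text{something})$ — more precisely, the free entries sit to the right of each pivot in non-pivot columns, and summing them gives $N = |\lambda|$ for the partition $\lambda$ with $\lambda_{k+1-j} = c_j - j$. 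Thus the total count is $\sum_\lambda q^{|\lambda|}$ over partitions $\lambda \subseteq (n-k)^k$, which by (2) equals $\qbinom{n}{k}_q$ evaluated at $p^m$.

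The main obstacle is bookkeeping rather than conceptual: in statement (3), getting the exponent count in the row-echelon form to match $|\lambda|$ on the nose requires carefully identifying which matrix entries are free and checking that their total number, as a function of the pivot positions $c_1 < \cdots < c_k$, is precisely the size of the associated partition inside the $(n-k) \times k$ box. One must be careful about the orientation/conjugation convention (whether $\lambda_i$ or $\lambda'_i$ records $c_i - i$) so that the bound $\lambda \subseteq (n-k)^k$ comes out correctly — each part is at most $n - k$ because there are $n - k$ non-pivot columns, and there are at most $k$ parts because there are $k$ pivots. I would state this bijection cleanly and relegate the entry-counting to a one-line justification. Everything else — the $q$-Pascal recurrence and the $q := 1$ specialization — is routine and short. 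If one prefers to avoid the row-echelon computation entirely, an alternative for (3) is to count ordered bases: there are $(q^n - 1)(q^n - q)\cdots(q^n - q^{k-1})$ ordered linearly independent $k$-tuples and $(q^k-1)(q^k-q)\cdots(q^k-q^{k-1})$ ordered bases of a fixed $k$-dimensional space, and the ratio simplifies to $\qbinom{n}{k}_q$; I would likely include this as the cleanest route to (3), using (2) only to assert membership in $\mathbb{N}[q]$.
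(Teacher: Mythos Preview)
Your proposal is correct and follows essentially the same route as the paper: both establish (2) by verifying that $\sum_{\lambda \subseteq (n-k)^k} q^{|\lambda|}$ satisfies the $q$-Pascal recurrence via the split on whether $\lambda_k \geq 1$ (equivalently $\ell(\lambda)=k$), and both deduce (1) from (2) by specializing $q:=1$ and invoking the standard bijection between partitions in $(n-k)^k$ and $k$-subsets of $\{1,\ldots,n\}$. The only difference is in (3): the paper uses the Orbit--Stabilizer count (equivalently your ``ordered bases'' alternative) as its primary argument and merely remarks on the Schubert-cell/row-echelon route, whereas you foreground the row-echelon computation and list the ordered-basis count as the alternative---so the two proofs simply swap which of the two standard arguments for (3) is emphasized.
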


Although Lemma~\ref{lem:combint} is very well-known (see for example~\cite[Propositions~1.7.2 and~1.7.3]{stanley1996ec1}), we include a (standard) proof for completeness and because some of the same ideas that go into the proof will reappear later, especially in the next section when we compute structure constants.

\begin{proof}[Proof of Lemma~\ref{lem:combint}]
Let $n,k \in \mathbb{N}$. We will assume $k \leq n$ as otherwise all quantities in question are zero.

First let us address the finite field statement. So $q := p^m$ is a prime power in this paragraph. Let $V \subseteq \mathbb{F}_{q}^{n}$ be a $k$-dimensional subspace. The orbit of $V$ under the action of the general linear group~$\mathrm{GL}(\mathbb{F}_{q}^{n})$ is exactly the set of all $k$-dimensional subspaces of~$\mathbb{F}_{q}^{n}$. Thus by the Orbit-Stabilizer Theorem we only need to compute~$|\mathrm{GL}(\mathbb{F}_{q}^{n})|$ and~$|\mathrm{Stab}(V)|$ to count the number of $k$-dimensional subspaces of $\mathbb{F}_{q}^{n}$. To compute the order of $\mathrm{GL}(\mathbb{F}_{q}^{n})$ let us represent elements of~$\mathrm{GL}(\mathbb{F}_{q}^{n})$ as matrices. Suppose we build a matrix in~$\mathrm{GL}(\mathbb{F}_{q}^{n})$ one row at a time. For the first row we have $(q^{n}-1)$ choices as any nonzero vector is permissible; for the second row we need to choose a vector not in the span of the first row and so have $(q^{n}-q)$ choices; for the third row we need to choose a vector not in the span of the first two rows and so have $(q^{n}-q^2)$ choices; and so on. Thus
\begin{equation} \label{eq:gln}
|\mathrm{GL}(\mathbb{F}_{q}^{n})| = (q^{n}-1)(q^{n}-q)\cdots(q^{n}-q^{n-1}).
\end{equation}
Computing $|\mathrm{Stab}(V)|$ is similar. Let us suppose without loss of generality that~$V$ is the span of the first $k$ standard basis vectors. Then $\mathrm{Stab}(V)$ consists of all elements of $\mathrm{GL}(\mathbb{F}_{q}^{n})$ of the form $\begin{pmatrix} A & 0 \\ * & * \end{pmatrix}$ where $A \in \mathrm{GL}(\mathbb{F}_{q}^{k})$. By~(\ref{eq:gln}), the number of choices for $A$ is $|\mathrm{GL}(\mathbb{F}_{q}^{k})| = (q^{k}-1)(q^{k}-q)\cdots(q^{k}-q^{k-1})$. The number of choices for the remaining $(n-k)$ rows of the matrix can be computed as follows: we have $(q^{n}-q^{k})$ choices for the $(k+1)$st row as it cannot lie in the span of the first~$k$ rows; we have $(q^{n}-q^{k+1})$ choices for the $(k+2)$nd row; and so on. So,
\[ |\mathrm{Stab}(V)| = (q^{k}-1)(q^{k} - q)\cdots(q^{k}-q^{k-1}) \cdot (q^{n}-q^{k})(q^{n}-q^{k+1})\cdots (q^{n}-q^{n-1}).\]
Therefore, by the Orbit-Stabilizer Theorem the number of $k$-dimensional subspaces of~$\mathbb{F}_{q}^{n}$ is
\[\frac{|\mathrm{GL}(\mathbb{F}_{q}^{n})|}{|\mathrm{Stab}(V)|} = \frac{[n]_q!\,(q-1)^{n}\,q^{\binom{n}{2}}}{[k]_q!\,(q-1)^{k}\,q^{\binom{k}{2}}\,[n-k]_q!\,(q-1)^{n-k}\,q^{\binom{n}{2}-\binom{k}{2}}} = \qbinom{n}{k}_q, \]
as claimed.

Now let us address the quantum statement. It is routine to verify that the $q$-binomial coefficients satisfy the $q$-Pascal identity for $n,k \in \mathbb{N}$:
\begin{equation} \label{eq:qpascal}
\qbinom{n}{k}_q = q^k\qbinom{n-1}{k}_q + \qbinom{n-1}{k-1}_q.
\end{equation}
Clearly if $k = 0$ or $n-k = 0$ we have~$\sum_{\lambda \subseteq (n-k)^k} q^{|\lambda|} = 1 = \qbinom{n}{k}_q$. So assume that~$k > 0$ and $n-k > 0$. To establish that $\sum_{\lambda \subseteq (n-k)^k} q^{|\lambda|} = \qbinom{n}{k}_q$ we need only show that $\sum_{\lambda \subseteq (n-k)^k} q^{|\lambda|}$ satisfies the same recurrence as in~(\ref{eq:qpascal}). We can establish this recurrence bijectively: specifically, we define a bijection 
\begin{align*}
\{\lambda\colon \lambda \subseteq (n-k)^k\} &\xrightarrow{\sim} \{(i,\mu)\colon i \in \{0,1\}, \mu \subseteq (n-1-k+i)^{k-i}\} \\
\lambda &\mapsto \begin{cases} (0,(\lambda_1-1,\lambda_2-1,\ldots,\lambda_k-1)) &\textrm{if $\ell(\lambda)=k$}; \\
(1,(\lambda_1,\lambda_2,\ldots,\lambda_{k-1})) &\textrm{if $\ell(\lambda) < k$}, \end{cases}
\end{align*}
and observe that if $\lambda \mapsto (i,\mu)$ under this bijection then $|\lambda| = k(1-i) + |\mu|$. So the quantum statement is proved. We remark that the quantum and finite field interpretations of the $q$-binomial coefficients are closely connected via the Schubert cell decomposition of the Grassmannian (and indeed this decomposition is an alternative way to prove the quantum statement; see the proof of~\cite[Proposition~1.7.3]{stanley1996ec1}).

Finally, let us show how that classical statement follows from the quantum one. The point is that there is a bijection between partitions contained in~$(n-k)^k$ and $k$-element subsets of~$\{1,\ldots,n\}$. The bijection works as follows. First we represent a partition $\lambda \subseteq (n-k)^k$ by the southeast border path of its Young diagram, which connects the southwest corner of the rectangle $(n-k)^k$ to its northeast corner. This southeast border path consists of exactly~$k$ north steps and~$(n-k)$ east steps. We map $\lambda$ to the set $S \subseteq \{1,\ldots,n\}$ of the indices of the north steps in this path. For example, if $n=7$, $k=3$, and $\lambda = (4,2,0)$, then the following depicts the Young diagram of $\lambda$ inside $(n-k)^k$ (its boxes are shaded) together with its southeast border path (in bold) with the steps of this path labeled by their indices (the labels of north steps are to the right of the step and the labels of east steps are above the step):
\begin{center}
\begin{tikzpicture}
\node at (0,0) {\begin{ytableau} *(lightgray) & *(lightgray) & *(lightgray) & *(lightgray)  \\ *(lightgray) & *(lightgray)  & &  \\ & & &  \end{ytableau}};
\def\x{0.54}
\draw[line width=2.5pt] (-2.0*\x,-1.525*\x) -- (-2.0*\x,-0.5*\x) -- (0*\x,-0.5*\x) -- (0*\x,0.5*\x) -- (2.0*\x,0.5*\x) -- (2.0*\x,1.525*\x);
\node at (-1.7*\x,-1.1*\x) {1};
\node at (-1.5*\x,-0.05*\x) {2};
\node at (-0.5*\x,-0.05*\x) {3};
\node at (0.4*\x,-0.1*\x) {4};
\node at (0.5*\x,0.95*\x) {5};
\node at (1.5*\x,0.95*\x) {6};
\node at (2.4*\x,0.9*\x) {7};
\end{tikzpicture}
\end{center}
So $\lambda$ is sent to $\{1,4,7\}$. This bijection between partitions and subsets establishes the identity
\begin{equation} \label{eq:qbinomsubsets}
\qbinom{n}{k}_q = \sum_{\lambda \subseteq (n-k)^k} q^{|\lambda|} = \sum_{S \subseteq \{1,2,\ldots,n\}} q^{\sum_{i \in S} i - \binom{k+1}{2}}.
\end{equation}
The classical statement follows from~(\ref{eq:qbinomsubsets}) by specializing $q :=1$.
\end{proof}

We will be using the interpretations in Lemma~\ref{lem:combint} to give combinatorial proofs of algebraic identities within these rings. In order to pass from the combinatorial interpretations to algebraic expressions involving $x$ we have the following easy lemma.

\begin{lemma} \label{lem:interpolation} \

\begin{enumerate}

\item \textbf{(Classical interpolation)} Let $F \in \mathbb{Z}[x_0,x_1,...x_k]$. Then 
\[F\left(\binom{n}{0}, \binom{n}{1}, \dots , \binom{n}{k}\right) = 0 \textrm{ in $\mathbb{Z}$}\] 
for all $n\in \mathbb{N}$ iff $F( \binom{x}{0}, \binom{x}{1}, \dots , \binom{x}{k}) = 0 $ in $\mathcal{R}$. 

\item {\textbf{(Quantum interpolation)}} Let $F \in \mathbb{Z}[q][x_0,x_1,...x_k]$. Then 
\[F\left(\qbinom{n}{0}_q, \qbinom{n}{1}_q, \dots , \qbinom{n}{k}_q\right) = 0 \textrm{ in $\mathbb{Z}[q]$}\]
for all $n\in \mathbb{N}$ iff $F(\qbinom{x}{0}, \qbinom{x}{1}, \dots , \qbinom{x}{k}) = 0 $ in $\mathcal{R}^{+}_q$. 

\item {\textbf{(Finite field interpolation)}} Let $F \in \mathbb{Z}[q][x_0,x_1,...x_k]$. Then
\[F\left(\qbinom{n}{0}_{q}, \qbinom{n}{1}_{q}, \dots , \qbinom{n}{k}_{q}\right) = 0 \textrm{ in $\mathbb{Z}[q]/\langle q-p^m\rangle$}\] 
for all $n\in \mathbb{N}$ and prime powers $p^m$ iff $F(\qbinom{x}{0}, \qbinom{x}{1}, \dots , \qbinom{x}{k}) = 0 $ in $\mathcal{R}^{+}_q$. 

\end{enumerate}
\end{lemma}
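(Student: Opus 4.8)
The plan is to prove all three parts by the same soft device: a polynomial over a field (or over $\mathbb{Z}$, which embeds in a field of characteristic zero) that vanishes at infinitely many distinct points must be the zero polynomial. In each part the ``only if'' direction is immediate and carries no content: $\binom{x}{k} \in \mathcal{R}$ and $\qbinom{x}{k} \in \mathcal{R}^{+}_q$ with $\qbinom{[n]_q}{k} = \qbinom{n}{k}_q$ by Proposition~\ref{prop:qbinom}, so substituting $x := n$ (respectively $x := [n]_q$, and then reducing modulo $\langle q - p^m\rangle$) into a polynomial identity that vanishes in the ring yields $0$ at every sample point. So I will concentrate on the ``if'' directions.

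For part (1), I set $G(x) := F(\binom{x}{0},\dots,\binom{x}{k}) \in \mathbb{Q}[x]$ (in fact $G \in \mathcal{R}$) and observe that the hypothesis says $G(n) = 0$ for all $n \in \mathbb{N}$; a polynomial over the field $\mathbb{Q}$ with infinitely many roots vanishes, so $G = 0$. Part (2) is the same argument over $\mathbb{Q}(q)$: put $G(x) := F(\qbinom{x}{0},\dots,\qbinom{x}{k}) \in \mathbb{Q}(q)[x]$, note that $G([n]_q) = F(\qbinom{n}{0}_q,\dots,\qbinom{n}{k}_q) = 0$ for all $n$ by the hypothesis and Proposition~\ref{prop:qbinom}, and use that the $[n]_q$ are pairwise distinct elements of $\mathbb{Q}(q)$ (for instance, $[n]_q$ has degree $n-1$ in $q$ for $n \geq 1$, while $[0]_q = 0$) to conclude $G = 0$.

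For part (3) the natural move is to reduce to part (2) rather than argue directly. Fix $n \in \mathbb{N}$ and set $g_n(q) := F(\qbinom{n}{0}_q,\dots,\qbinom{n}{k}_q)$; since each $\qbinom{n}{j}_q$ lies in $\mathbb{N}[q]$ and $F$ has coefficients in $\mathbb{Z}[q]$, we have $g_n(q) \in \mathbb{Z}[q]$, and under the identification $\mathbb{Z}[q]/\langle q - p^m\rangle \cong \mathbb{Z}$ the hypothesis says precisely that $g_n(p^m) = 0$ for every prime power $p^m$. As there are infinitely many prime powers, the one-variable polynomial $g_n(q) \in \mathbb{Z}[q]$ has infinitely many roots and hence $g_n(q) = 0$. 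Since this holds for every $n \in \mathbb{N}$, we have arrived exactly at the hypothesis of part (2), which we invoke to conclude $F(\qbinom{x}{0},\dots,\qbinom{x}{k}) = 0$ in $\mathcal{R}^{+}_q$.

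I do not anticipate a genuine obstacle here; the one point that needs care is the one just used in part (3): one should not try to substitute $q := p^m$ directly into $G(x) \in \mathbb{Q}(q)[x]$, because the coefficients of $G$ carry denominators built from powers of $q$ and from the factors $[j]_q$, so there is a risk of hitting a pole. Passing instead through the honest integer polynomials $g_n(q) \in \mathbb{Z}[q]$ and then chaining to part (2) sidesteps this entirely. One could also remark that part (1) is the $q := 1$ specialization of part (2), but proving it on its own is just as quick.
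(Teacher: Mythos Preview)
Your proof is correct and follows exactly the approach of the paper: parts (1) and (2) reduce to the vanishing of a one-variable polynomial at infinitely many points, and part (3) first fixes $n$ to deduce $g_n(q)=0$ from its vanishing at all prime powers, then invokes part (2). Your write-up is simply a more detailed unpacking of the paper's terse argument, including the useful remark about avoiding direct substitution of $q:=p^m$ into elements of $\mathbb{Q}(q)[x]$.
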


\begin{proof}
The first two statements just reduce to the fact that a polynomial in one variable vanishes at infinitely many points if and only if it is the zero polynomial.  For the third statement we can use the fact mentioned in the previous sentence to go from $q := p^m$ a prime power to formal $q$, and then apply the second statement.
\end{proof}

\end{section}

\begin{section}{Structure constants} \label{sec:structconsts}

Propositions~\ref{prop:binom} and~\ref{prop:qbinom} give us bases for $\mathcal{R}$ and $\mathcal{R}^{+}_q$ as algebras over~$\mathbb{Z}$ and~$\mathbb{Z}[q]$ respectively, but they do not tell us anything about how a product of basis elements decomposes as a sum of other basis elements. The point of this section will be to give formulas and combinatorial interpretations for these structure constants.

While $\mathcal{R}$ can be obtained from $\mathcal{R}^{+}_q$ by specializing $q := 1$, it is an important specialization so we will treat it on its own as a warm up. The following theorem gives the structure constants for $\mathcal{R}$.

\begin{theorem}\label{thm:binommult} 
Let $i,j \in \mathbb{N}$. Then the following formula holds in $\mathcal{R}$:
\[\binom{x}{i}\binom{x}{j} = \sum_{k = \mathrm{max}(i,j)}^{i+j} \frac{k!}{(k-i)!(k-j)! (i+j-k)!} \binom{x}{k}.\]
In particular, the $\mathbb{Z}$-algebra $\mathcal{R}$ with distinguished basis $\{\binom{x}{k}\colon k \in \mathbb{N}\}$ has structure constants in~$\mathbb{N}$.
\end{theorem}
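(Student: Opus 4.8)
The plan is to prove the identity by the classical-interpolation half of Lemma~\ref{lem:interpolation}: it suffices to verify the identity after substituting $x := n$ for every $n \in \mathbb{N}$, i.e.\ to show
\[
\binom{n}{i}\binom{n}{j} \;=\; \sum_{k=\max(i,j)}^{i+j} \frac{k!}{(k-i)!\,(k-j)!\,(i+j-k)!}\,\binom{n}{k}
\]
as an identity of integers for all $n$. I would prove this numerical identity bijectively, using the ``Classical'' interpretation from Lemma~\ref{lem:combint}: $\binom{n}{i}\binom{n}{j}$ counts ordered pairs $(A,B)$ of subsets of $[1,n]$ with $|A| = i$ and $|B| = j$. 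Given such a pair, let $k := |A \cup B|$; then $\max(i,j) \le k \le i+j$, and $|A \cap B| = i + j - k$. So I would classify the pairs $(A,B)$ by the ``profile'' $C := A \cup B$ (a $k$-element subset of $[1,n]$, contributing $\binom{n}{k}$ once we sum over all $k$-subsets), and then count the pairs $(A,B)$ with a fixed union $C$: choose which $i+j-k$ elements of $C$ lie in $A \cap B$, then which of the remaining $2k - i - j$ elements lie in $A$ only versus $B$ only — but the latter count is forced, since exactly $i - (i+j-k) = k-j$ of them must be in $A \setminus B$. Hence the number of pairs with union $C$ is $\binom{k}{i+j-k}\binom{2k-i-j}{\,k-j\,} = \frac{k!}{(i+j-k)!\,(k-i)!\,(k-j)!}$, which is exactly the claimed coefficient. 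Summing over all $k$ and all $k$-subsets $C$ gives the identity.

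The remaining assertion, that the structure constants lie in $\mathbb{N}$, is then immediate: the coefficient $\frac{k!}{(k-i)!\,(k-j)!\,(i+j-k)!}$ is the multinomial coefficient $\binom{k}{\,k-i,\;k-j,\;i+j-k\,}$ — note $(k-i) + (k-j) + (i+j-k) = k$ — and multinomial coefficients are nonnegative integers; alternatively this is visible directly from the bijective count above, which exhibits each coefficient as a cardinality. Finally, one should observe that the sum ranges only over $\max(i,j) \le k \le i+j$ because outside that range one of $k-i$, $k-j$, $i+j-k$ is negative, so there are no pairs $(A,B)$ of the relevant profile; this matches the stated summation bounds.

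I do not expect any genuine obstacle here. The only point requiring a little care is the bookkeeping in the bijective count — making sure that once $C = A \cup B$ and $A \cap B$ are chosen the rest of the data is a free choice of a $(k-j)$-subset of a $(2k-i-j)$-set, and that this indeed produces every valid pair exactly once — but this is entirely routine. (One could equally well run the whole argument through the quantum interpolation of Lemma~\ref{lem:interpolation} and specialize $q := 1$, but since this theorem is billed as a warm-up for the $q$-case it is cleaner to give the classical argument directly.)
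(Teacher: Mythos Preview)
Your proposal is correct and matches the paper's proof essentially line for line: reduce via the classical interpolation lemma to the numerical identity at $x=n$, then count pairs of subsets $(A,B)$ of $[1,n]$ by first choosing their union $C$ of size $k$, then $A\cap B$ inside $C$, then splitting the remaining $2k-i-j$ elements into $A\setminus B$ and $B\setminus A$. The only cosmetic difference is that the paper writes the inner binomial as $\binom{2k-i-j}{k-i}$ rather than your $\binom{2k-i-j}{k-j}$, which of course are equal.
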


\begin{proof} 
 Note that for $k \in [\mathrm{max}(i,j),i+j]$ we have
\[\frac{k!}{(k-i)!(k-j)! (i+j-k)!} = \binom{k}{i+j-k} \binom{2k-i-j}{k-i},\]
an equality which also explains the comment about the structure constants belonging to~$\mathbb{N}$. So by Lemma~\ref{lem:interpolation} it suffices to verify
\begin{equation} \label{eq:binommult}
\binom{n}{i}\binom{n}{j} = \sum_{k = \mathrm{max}(i,j)}^{i+j}  \binom{n}{k} \binom{k}{i+j-k} \binom{2k-i-j}{k-i}
\end{equation}
for $n \in \mathbb{N}$. For this we claim that both sides of~(\ref{eq:binommult}) count the same thing according to the classical interpretation of the binomial coefficients stated in Lemma~\ref{lem:combint}: namely, the number of pairs of subsets~$I, J \subseteq \{1,2,\dots,n\}$ such that~$|I|=i$ and~$|J|=j$. For the left-hand side of~(\ref{eq:binommult}) this is obvious. The right-hand side of~(\ref{eq:binommult}) can be interpreted as follows: the term $\binom{n}{k}$ counts the number of ways to choose~$K := I \cup J \subseteq \{1,\ldots,n\}$ of size $k := |K| \in [\mathrm{max}(i,j),i+j]$; the term $\binom{k}{i+j-k}$ counts the number of ways to choose the subset $I \cap J$ inside of~$K$; and the term $\binom{2k-i-j}{k-i}$ counts the number of ways to divide $K\setminus (I\cap J)$ into $J \setminus I$ (which has~$|J\setminus I|=k-i$) and~$I \setminus J$ (which has~$|I \setminus J| = k-j$). But these choices are equivalent to just choosing the subsets~$I$ and~$J$.
\end{proof}

We can mimic this entire theorem and proof (using subspaces of finite vector spaces instead of subsets of finite sets) to obtain the structure constants for~$\mathcal{R}^{+}_q$.

\begin{theorem} \label{thm:qbinommult} 
Let $i,j \in \mathbb{N}$. Then the following formula holds in $\mathcal{R}^{+}_q$:
\[ \qbinom{x}{i} \qbinom{x}{j} = \sum_{k = \mathrm{max}(i,j)}^{i+j}  \frac{q^{(k-i)(k-j)} \, [k]_q!}{[k-i]_q![k-j]_q![i+j-k]_q!} \qbinom{x}{k}.\]
In particular, the $\mathbb{Z}[q]$-algebra $\mathcal{R}^{+}_q$ with distinguished basis $\{\qbinom{x}{k}\colon k \in \mathbb{N}\}$ has structure constants in~$\mathbb{N}[q]$.
\end{theorem}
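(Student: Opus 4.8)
The plan is to mirror the proof of Theorem~\ref{thm:binommult}, replacing subsets of $\{1,\ldots,n\}$ by subspaces of $\mathbb{F}_q^n$ and invoking the finite-field interpretation of Lemma~\ref{lem:combint}. First I would rewrite the claimed coefficient as a monomial times a product of honest $q$-binomial coefficients: a routine manipulation of $q$-factorials gives, for $k \in [\max(i,j),\,i+j]$,
\[
\frac{q^{(k-i)(k-j)}\,[k]_q!}{[k-i]_q![k-j]_q![i+j-k]_q!} \;=\; q^{(k-i)(k-j)}\qbinom{k}{i+j-k}_q\qbinom{2k-i-j}{k-i}_q .
\]
Since each $q$-binomial coefficient lies in $\mathbb{N}[q]$ by Lemma~\ref{lem:combint}, this identity simultaneously explains the ``in particular'' clause once the main formula is proved.

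Next, by finite-field interpolation (Lemma~\ref{lem:interpolation}(3)) it suffices to check, for every prime power $q = p^m$ and every $n \in \mathbb{N}$, that
\[
\qbinom{n}{i}_q\qbinom{n}{j}_q \;=\; \sum_{k=\max(i,j)}^{i+j} \qbinom{n}{k}_q\, q^{(k-i)(k-j)}\qbinom{k}{i+j-k}_q\qbinom{2k-i-j}{k-i}_q .
\]
The left-hand side counts pairs $(U,W)$ of subspaces of $\mathbb{F}_q^n$ with $\dim U = i$ and $\dim W = j$. I would sort these pairs by $K := U+W$, whose dimension $k$ ranges over $[\max(i,j),\,i+j]$; the factor $\qbinom{n}{k}_q$ counts the choices of such a $K$. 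For a fixed $K$ of dimension $k$, the dimension formula forces $\dim(U\cap W) = i+j-k$, so I choose $D := U\cap W$ inside $K$ in $\qbinom{k}{i+j-k}_q$ ways and then pass to the quotient $K/D$, of dimension $2k-i-j$. What remains is to count ordered pairs $(\overline U, \overline W)$ of subspaces of $K/D$ that are complementary, with $\dim\overline U = k-j$ and $\dim\overline W = k-i$: choosing $\overline U$ contributes $\qbinom{2k-i-j}{k-j}_q = \qbinom{2k-i-j}{k-i}_q$, and then $\overline W$ may be any complement of the $(k-j)$-dimensional subspace $\overline U$, of which there are exactly $q^{(k-j)(k-i)} = q^{(k-i)(k-j)}$. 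Multiplying the four factors reproduces the summand, and one verifies that $(U,W)\mapsto (K,D,\overline U,\overline W)$ is a bijection onto the data just described, which finishes the count.

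The step I expect to demand the most care is the final combinatorial bookkeeping: establishing that the number of complements to a subspace of dimension $a$ in an $\mathbb{F}_q$-space of dimension $m$ is $q^{a(m-a)}$ (for instance by identifying complements with sections of the quotient map, a torsor over $\mathrm{Hom}(\mathbb{F}_q^{m-a},\mathbb{F}_q^{a})$), and carefully checking that the map $(U,W)\leftrightarrow(K,D,\overline U,\overline W)$ is a genuine bijection, including the degenerate boundary cases $k=i$ and $k=j$ where one of $\overline U,\overline W$ is zero. Everything else is a faithful translation of the classical argument. This complement count is precisely where the quantum story departs from the classical one: there is no canonical complement to a subspace, and counting complements is exactly what produces the extra monomial factor $q^{(k-i)(k-j)}$ that is invisible at $q=1$.
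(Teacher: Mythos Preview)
Your proposal is correct and follows essentially the same route as the paper's first proof (the one ``via finite Grassmannians''): rewrite the coefficient as $q^{(k-i)(k-j)}\qbinom{k}{i+j-k}_q\qbinom{2k-i-j}{k-i}_q$, apply finite-field interpolation (Lemma~\ref{lem:interpolation}(3)), and count pairs of subspaces of $\mathbb{F}_q^n$ by first choosing $K=U+W$, then $D=U\cap W$ inside $K$, then the complementary pair $(\overline U,\overline W)$ in $K/D$. The only cosmetic difference is in how the complementary-pair count is justified: the paper proves the auxiliary identity that the number of ordered complementary pairs $(U,W)$ with $\dim U=u$ in a $v$-space equals $q^{u(v-u)}\qbinom{v}{u}_q$ via an orbit--stabilizer argument, whereas you split it into ``choose $\overline U$'' followed by ``choose a complement $\overline W$'' and count complements as a torsor over $\mathrm{Hom}$; both are standard and give the same factor. (The paper also supplies a second, bijective proof via Young diagrams, which is genuinely different from your approach.)
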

\begin{proof}[Proof of Theorem~\ref{thm:qbinommult} via finite Grassmannians]
First let us prove an auxilliary result. Let $V$ be a $v$-dimensional vector space over $\mathbb{F}_{p^m}$. Then we claim that the number of pairs of subpsaces $U, W \subseteq V$ where $U$ is $u$-dimensional, $W$ is~$(v-u)$-dimensional, $U \cap W = 0$, and $U + W = V$ is $q^{u(v-u)}\qbinom{v}{u}_q$, where $q := p^m$. To prove this claim we will use the Orbit-Stabilizer Theorem as in the proof of Lemma~\ref{lem:combint}. Note that $\mathrm{GL}(V)$ acts on such pairs $U,W$ by simultaneously acting on~$U$ and on~$W$. There is one $\mathrm{GL}(V)$-orbit consisting of all such pairs. By~(\ref{eq:gln}) we have that $|\mathrm{GL}(V)| = (q^{v}-1)\cdots(q^{v}-q^{v-1})$. Also, for any fixed pair~$U,W$ it is clear that $\mathrm{Stab}(U,W) = \mathrm{GL}(U) \times \mathrm{GL}(W)$. So by~(\ref{eq:gln}) we have that $|\mathrm{Stab}(U,W)| = (q^{u}-1)\cdots(q^{u}-q^{u-1})\cdot(q^{v-u}-1)\cdots(q^{v-u}-q^{v-u-1})$. Thus by the Orbit-Stabilizer Theorem the number of such pairs $U,W$ is 
\[\frac{|\mathrm{GL}(V)|}{|\mathrm{Stab}(U,W)|} = \frac{[v]_q!\,(q-1)^v\,q^{\binom{v}{2}}}{[u]_q!\,(q-1)^u\,q^{\binom{u}{2}}\,[v-u]_q!\,(q-1)^{v-u}\,q^{\binom{v-u}{2}}} = q^{u(v-u)}\qbinom{v}{u}_q\]
as claimed.

Now we return to the proof of the theorem. Note that for~$k \in [\mathrm{max}(i,j),i+j]$ we have
\[\frac{q^{(k-i)(k-j)} \, [k]_q!}{[k-i]_q![k-j]_q![i+j-k]_q!} = q^{(k-i)(k-j)} \qbinom{k}{i+j-k}_q \qbinom{2k-i-j}{k-i}_q,\]
an equality which also explains the comment about the structure constants belonging to~$\mathbb{N}[q]$. So by Lemma~\ref{lem:interpolation} it suffices to verify
\begin{equation} \label{eq:qbinommult}
\qbinom{n}{i}_q \qbinom{n}{j}_q = \sum_{k = \mathrm{max}(i,j)}^{i+j}  \qbinom{n}{k}_q \qbinom{k}{i+j-k}_q q^{(k-i)(k-j)} \qbinom{2k-i-j}{k-i}_q
\end{equation}
for all $n \in \mathbb{N}$ and $q := p^m$ a prime power. For this we claim that both sides of~(\ref{eq:qbinommult}) count the same thing according to the finite field interpretation of the $q$-binomial coefficients stated in Lemma~\ref{lem:combint}: namely, the number of pairs of subspaces~$I, J \subseteq \mathbb{F}_{p^m}^n$ such that $I$ is $i$-dimensional and $J$ is $j$-dimensional. For the left-hand side of~(\ref{eq:qbinommult}) this is obvious. The right-hand side of~(\ref{eq:qbinommult}) can be interpreted as follows: the term $\qbinom{n}{k}_q$ counts the number of ways to choose a subspace $K := I+J$ of $\mathbb{F}_{p^m}^{n}$ of dimension $k := \mathrm{dim}(K) \in [\mathrm{max}(i,j),i+j]$; the term $\qbinom{k}{i+j-k}_q$ counts the number of ways to choose the subspace $I \cap J$ of dimension $\mathrm{dim}(I \cap J) = i+j-k$ inside $K$; and, thanks to the first paragraph, the term $q^{(k-i)(k-j)}\qbinom{2k-i-j}{k-i}_q$ counts the number of ways to split $K/(I\cap J)$ into~$\pi(I),\pi(J)$, where $\pi\colon K \to K/(I \cap J)$ is the projection map. But these choices are equivalent to just choosing the subspaces~$I$ and~$J$.
\end{proof}

Lemma~\ref{lem:combint} gives us two different interpretations of $q$-binomial coefficients. It is always worthwhile to interpret an identity of $q$-binomial coefficients like the one in Theorem~\ref{thm:qbinommult} in both the language of finite Grassmannians and of Young diagram combinatorics. Thus we will now give a different, bijective proof of Theorem~\ref{thm:qbinommult} using Young diagrams.

\begin{proof}[Proof of Theorem~\ref{thm:qbinommult} via Young diagrams] Assume by symmetry that~$i \geq j$. Then note that for $k \in [i,i+j]$ we have
\[\frac{q^{(k-i)(k-j)} \, [k]_q!}{[k-i]_q![k-j]_q![i+j-k]_q!} = q^{(k-i)(k-j)} \qbinom{k}{i}_q \qbinom{i}{k-j}_q. \]
So by Lemma~\ref{lem:interpolation} it suffices to verify
\begin{equation} \label{eq:qbinommult2}
\qbinom{n}{i}_q \qbinom{n}{j}_q = \sum_{k = i}^{i+j} q^{(k-i)(k-j)} \qbinom{n}{k}_q \qbinom{k}{i}_q \qbinom{i}{k-j}_q 
\end{equation}
for all~$n \in \mathbb{N}$. Equation~(\ref{eq:qbinommult2}) will follow from the existence of a bijection
\[\arraycolsep=1pt \left\{(\lambda,\mu)\colon \begin{array}{c} \lambda \subseteq (n-i)^i, \\ \mu \subseteq (n-j)^j \end{array}\right\} \xrightarrow{\sim} \left\{(k,\alpha,\beta,\gamma)\colon \begin{array}{c} k \in [i,i+j], \alpha \subseteq (n-k)^k, \\ \beta \subseteq (k-i)^i, \gamma \subseteq (i+j-k)^{k-j} \end{array}\right\} \]
satisfying $|\lambda| + |\mu| = (k-i)(k-j) + |\alpha| + |\beta| + |\gamma|$ when $(\lambda,\mu) \mapsto (k,\alpha,\beta,\gamma)$. Indeed, Lemma~\ref{lem:combint} tells us that the left-hand side of~(\ref{eq:qbinommult2}) is $\sum_{(\lambda,\mu)}q^{|\lambda| + |\mu|}$ where the sum is over the domain of this bijection, and that the right-hand side is~$\sum_{(k,\alpha,\beta,\gamma)}q^{(k-i)(k-j) + |\alpha| + |\beta| + |\gamma|}$ where the sum is over the codomain. So let~$\lambda \subseteq (n-i)^i$ and~$\mu \subseteq (n-j)^j$; we will define $k$, $\alpha$, $\beta$, $\gamma$ such that setting~$(\lambda,\mu) \mapsto (k,\alpha,\beta,\gamma)$ gives us the desired bijection. First: how do we find the number~$k$ from $\lambda$ and~$\mu$? We define $k$ as follows:
\[k := \mathrm{max}\{m\in[i,i+j]\colon(m-j)^{m-i} \subseteq \mu\}. \]
This~$(k-j)^{k-i}$ rectangle in the topleft corner of $\mu$ will be removed from $\mu$ as we construct $\alpha$, $\beta$, $\gamma$ from the remaining boxes of~$\lambda$ and~$\mu$ and will account for the~$(k-i)(k-j)$ term in the desired equality $|\lambda| + |\mu| = (k-i)(k-j) + |\alpha| + |\beta| + |\gamma|$. There are some boxes in~$\mu$ south of the~$(k-j)^{k-i}$ rectangle and some boxes east. The boxes in~$\mu$ south of the rectangle will become $\gamma$; or rather, they will become the transpose of $\gamma$. That is, we set
\[\gamma := (\mu_{k-i+1},\mu_{k-i+2},\ldots,\mu_{j})'.\]
The boxes in~$\mu$ east of the rectangle will mix with boxes in $\lambda$ to form $\alpha$. More specifically, we will pull certain columns off of $\lambda$ to form~$\beta$, and the remaining boxes in~$\lambda$ will mix with the boxes in~$\mu$ to the east of the $(k-i)^{k-j}$ rectangle to form~$\alpha$. These columns of $\lambda$ are defined as follows: for~$s = 1,\ldots,k-i$ we set
\[c_s:= \mathrm{min}\{t\in \mathbb{N}\colon \lambda_{t+1} \leq \mu_s - (i-j+s)\}. \]
Note that $\mu_s \geq k-j \geq (i-j+s)$ for all $s = 1,\ldots,k-i$, and $\lambda_{i+1} = 0$, so~$c_s \in [0,i]$ for all $s =1,\ldots,k-i$. As we said, these columns become $\beta$; that is, we set
\[\beta := (c_{k-i},c_{k-i-1},\ldots,c_{1})'.\]
Finally, set
\[\alpha := \begin{array}{c}(\lambda_1 - (k-i),\lambda_2-(k-i),\lambda_3-(k-i),\ldots,\lambda_{c_1}-(k-i),\mu_1-(k-j), \\ \lambda_{c_1+1} - (k-i-1),\lambda_{c_1+2} - (k-i-1),...,\lambda_{c_2}-(k-i-1),\mu_2-(k-j),\\
\lambda_{c_2+1} - (k-i-2),\lambda_{c_2+2} - (k-i-2),...,\lambda_{c_3}-(k-i-2),\mu_3-(k-j),\\
\vdots \\
\lambda_{c_{k-i-1}+1} - 1,\lambda_{c_{k-i-1}+2} - 1,...,\lambda_{c_{k-i}}-1,\mu_{k-i}-(k-j), \\
\lambda_{c_{k-i}+1},\lambda_{c_{k-i}+2},\ldots,\lambda_{i}).
\end{array}\]

Let us illustrate this construction with an example. Say $n = 14$, $i=7$, $j = 6$, and $\lambda = (7,6,5,5,2,2,1)$ and $\mu = (8,7,6,4,2,1)$. Then we can compute~$k = 10$. Figure~\ref{fig:bijex} shows how the boxes of~$\lambda$ and the boxes of~$\mu$ not in the top left~$4^3$ rectangle are moved around to construct $\alpha$, $\beta$, and $\gamma$. We see that:
\[\alpha=(4,4,4,3,3,3,2,2,2,1); \qquad \beta=(3,2,2,2); \qquad \gamma=(3,2,1,1).\]
In this example the $c$'s are $c_1 = 1$, $c_2=4$, and~$c_3=4$. 

\begin{figure}[ht]
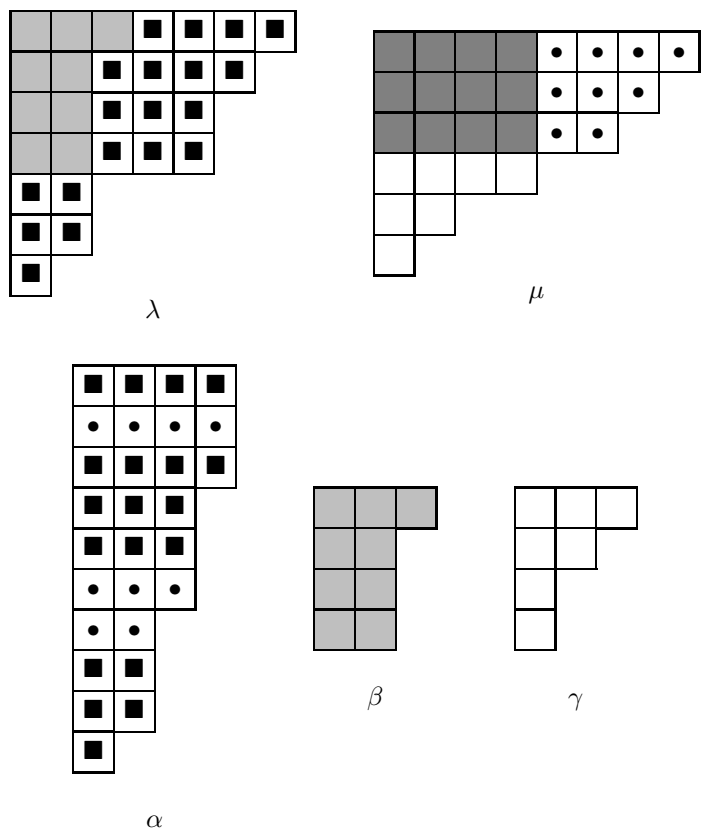

\[\begin{array}{c} \begin{ytableau} 
*(lightgray) & *(lightgray) & *(lightgray) &  \blacksquare  &  \blacksquare  &  \blacksquare  &  \blacksquare  \\ 
*(lightgray) & *(lightgray) &  \blacksquare  &  \blacksquare  &  \blacksquare  &  \blacksquare   \\ 
*(lightgray) & *(lightgray) &  \blacksquare  &  \blacksquare  &  \blacksquare   \\ 
*(lightgray) & *(lightgray) &  \blacksquare  & \blacksquare  &  \blacksquare  \\
\blacksquare  &  \blacksquare  \\
\blacksquare  &  \blacksquare  \\
\blacksquare  \\
\end{ytableau} \\ \lambda \end{array} \qquad
\begin{array}{c} \begin{ytableau} 
*(gray) & *(gray) & *(gray) & *(gray) & \bullet & \bullet & \bullet & \bullet  \\ 
*(gray) & *(gray) & *(gray) & *(gray) & \bullet & \bullet & \bullet  \\ 
*(gray) & *(gray) & *(gray) & *(gray) & \bullet & \bullet  \\ 
*(white) & *(white) & *(white) & *(white) \\
*(white) & *(white) \\
*(white) \\
\end{ytableau} \\ \mu \end{array}\] \medskip
\[ \begin{array}{c} \begin{ytableau} \blacksquare  & \blacksquare  & \blacksquare  & \blacksquare  \\
\bullet & \bullet & \bullet & \bullet \\ 
\blacksquare  & \blacksquare  & \blacksquare  & \blacksquare  \\
\blacksquare  & \blacksquare  & \blacksquare    \\ 
\blacksquare  & \blacksquare  & \blacksquare  \\
\bullet & \bullet & \bullet  \\ 
\bullet & \bullet  \\ 
\blacksquare  & \blacksquare  \\
\blacksquare  & \blacksquare  \\
\blacksquare  \\
\end{ytableau} \\ \\ \alpha \end{array} \qquad
\begin{array}{c} \begin{ytableau}
*(lightgray) & *(lightgray) & *(lightgray) \\
*(lightgray) & *(lightgray) \\
*(lightgray) & *(lightgray) \\
*(lightgray) & *(lightgray) \\
\end{ytableau} \\ \\ \beta \end{array} \qquad
\begin{array}{c} \begin{ytableau}
*(white) & *(white) & *(white) \\
*(white) & *(white) \\
*(white) \\
*(white) \\
\end{ytableau} \\ \\ \gamma \end{array}\]
\caption{An example of the bijection in the second proof of Theorem~\ref{thm:qbinommult}. In this example $n=14$, $i=7$, $j=6$, and $k=10$.} \label{fig:bijex}
\end{figure}

By definition $k \in [i,i+j]$. It is routine to verify that $\alpha \subseteq (n-k)^k$, $\beta \subseteq (k-i)^i$, and $\gamma \subseteq (i+j-k)^{k-j}$, and that $|\lambda| + |\mu| = (k-i)(k-j) + |\alpha| + |\beta| + |\gamma|$. It is also easy to check that this procedure is invertible. So we have indeed defined the desired bijection.
\end{proof}

Unlike the proof of Theorem~\ref{thm:binommult} via subsets or the proof of Theorem~\ref{thm:qbinommult} via subspaces, this last proof of Theorem~\ref{thm:qbinommult} via Young diagrams breaks the symmetry between~$i$ and~$j$. It would be interesting to find a bijective proof of Theorem~\ref{thm:qbinommult} that respects the symmetry between~$i$ and~$j$. In particular, it does not appear possible to directly transfer the bijective proof of Theorem~\ref{thm:binommult} to a bijective proof of Theorem~\ref{thm:qbinommult} via the correspondence between subsets and Young diagrams used to establish~(\ref{eq:qbinomsubsets}).

\end{section}

\begin{section}{Related rings} \label{sec:relrings}
Recall that the ring of integer-valued polynomials $\mathcal{R}$ was defined in Section~\ref{sec:intvalpolys} as the collection of all $P \in \mathbb{Q}[x]$ such that $P(n) \in \mathbb{Z}$ for all $n \in \mathbb{N}$. This was the definition we $q$-deformed to define $\mathcal{R}_q^+$. However, there are (at least) two other well known characterizations of $\mathcal{R}$, as summarized by the following lemma.

\begin{lemma}\label{lem:otherchars}
Let $P\in\mathbb{Q}[x]$ be a polynomial of degree $d$. Then the following are equivalent:
\begin{enumerate}
\item $P(n) \in \mathbb{Z}$ for $n\in[0,d]$.

\item $P(n) \in \mathbb{Z}$ for $n \in \mathbb{N}$ (in other words, $P \in \mathcal{R}$).

\item $P(n) \in \mathbb{Z}$ for $n \in \mathbb{Z}$.

\end{enumerate}
\end{lemma}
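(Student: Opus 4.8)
The plan is to prove the cycle of implications $(3) \Rightarrow (2) \Rightarrow (1) \Rightarrow (3)$. The first two are immediate, since $[0,d] \subseteq \mathbb{N} \subseteq \mathbb{Z}$, so all of the content lies in the implication $(1) \Rightarrow (3)$.

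For $(1) \Rightarrow (3)$, I would reuse the interpolation construction from the proof of Proposition~\ref{prop:qbinom}, specialized to $q = 1$ (equivalently, one can invoke Proposition~\ref{prop:binom} together with the coefficient-extraction below). Given $P$ of degree $d$ with $P(n) \in \mathbb{Z}$ for $n \in [0,d]$, set $P_0(x) := P(0)$ and $P_k(x) := P_{k-1}(x) + (P(k) - P_{k-1}(k))\binom{x}{k}$ for $1 \leq k \leq d$. Since $\binom{k}{k} = 1$ and $\binom{n}{k} = 0$ for $0 \leq n < k$, one checks inductively that $P - P_k$ vanishes at $x = 0,1,\ldots,k$; hence $P - P_d$ has degree at most $d$ with $d+1$ roots and is zero, so $P = P_d$. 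By induction each $P_{k-1}$ is a $\mathbb{Z}$-linear combination of the $\binom{x}{j}$ with $j < k$, so $P_{k-1}(k) \in \mathbb{Z}$ (using that $\binom{k}{j} \in \mathbb{Z}$), and therefore every coefficient $P(k) - P_{k-1}(k)$ lies in $\mathbb{Z}$. Thus $P = \sum_{k=0}^{d} c_k \binom{x}{k}$ with all $c_k \in \mathbb{Z}$.

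It then remains to observe that $\binom{m}{k} \in \mathbb{Z}$ for every $m \in \mathbb{Z}$ and $k \in \mathbb{N}$, not merely for $m \in \mathbb{N}$. For $m \geq 0$ this is the classical interpretation in Lemma~\ref{lem:combint}; for $m < 0$, writing $m = -r$ with $r > 0$, the defining product formula gives
\[\binom{-r}{k} = \frac{(-r)(-r-1)\cdots(-r-k+1)}{k!} = (-1)^k \binom{r+k-1}{k} \in \mathbb{Z}.\]
Consequently $P(m) = \sum_{k} c_k \binom{m}{k} \in \mathbb{Z}$ for all $m \in \mathbb{Z}$, which is exactly $(3)$.

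I do not expect a genuine obstacle here; the only point needing a moment's care is the evaluation of $\binom{x}{k}$ at negative integers, which is dispatched by the negation identity above (or, alternatively, by the Pascal recurrence $\binom{x}{k} = \binom{x-1}{k} + \binom{x-1}{k-1}$ and downward induction from $\mathbb{N}$). Everything else is bookkeeping already carried out in the proof of Proposition~\ref{prop:qbinom}.
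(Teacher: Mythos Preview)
Your proof is correct, but it takes a different route from the paper. The paper proves $(1)\Rightarrow(3)$ by induction on the degree using the finite difference $\widetilde{P}(x):=P(x+1)-P(x)$: this has degree $d-1$ and is integer-valued on $[0,d-1]$, so by induction $\widetilde{P}(n)\in\mathbb{Z}$ for all $n\in\mathbb{Z}$, and then a telescoping sum recovers $P(n)$ from $P(0)$ and values of~$\widetilde{P}$. Your argument instead expands $P$ in the basis $\binom{x}{k}$ with integer coefficients (via the interpolation of Proposition~\ref{prop:qbinom} at $q=1$) and then invokes the negation identity $\binom{-r}{k}=(-1)^k\binom{r+k-1}{k}$ to handle negative inputs. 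Both are perfectly valid; the paper's approach is a bit more self-contained (it does not rely on Proposition~\ref{prop:binom} or the negation identity) and, more to the point, it is the template that generalizes cleanly to the $q$-case in Lemma~\ref{lem:qotherchars}, where the role of $P(x+1)-P(x)$ is played by $P(qx+1)-q^dP(x)$. Your approach would also $q$-deform, but one would then need to verify directly that $\qbinom{[n]_q}{k}\in\mathbb{Z}[q,q^{-1}]$ for negative $n$, which is a separate computation.
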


\begin{proof}
Clearly $3 \implies 2 \implies 1$, so we just need to check that~$1 \implies 3$. The proof will be by induction on the degree $d$.  Note that if~$d=0$ then the polynomial is constant and the theorem holds trivially. So assume the degree of~$P(x)$ is greater than zero and define~$\widetilde{P}(x):= P(x+1)-P(x)$ and note that~$\widetilde{P}$ is a degree $d-1$ polynomial in $\mathbb{Q}[x]$ taking integer values at~$x\in[0,d-1]$. Hence by induction $\widetilde{P}(n)\in \mathbb{Z}$ for all $n \in \mathbb{Z}$. Finally we can conclude~$P(n) \in \mathbb{Z}$ for all~$n \in \mathbb{Z}$ because
\[P(n)= \begin{cases} 
      P(0)+\tilde{P}(0)+\tilde{P}(1)+\dots+\tilde{P}(n-1) & n\geq 0, \\
      P(0)-\tilde{P}(-1)-\tilde{P}(-2)-\dots-\tilde{P}(n) & n< 0; 
   \end{cases}\]
and in either case, each term on the right hand side is an integer.
\end{proof}

We'd now like to have a $q$-analog of Lemma~\ref{lem:otherchars} for our ring $\mathcal{R}_q^+$. We'll note that the definition $[n]_q := (q^n-1)/(q-1) \in \mathbb{Z}[q,q^{-1}]$ makes perfect sense for all integers~$n \in \mathbb{Z}$. Explicitly, for $n \in \mathbb{Z}$ we set
\[[n]_q := \begin{cases} 1+q+q^2 + \cdots + q^{n-1} &\textrm{if $n > 0$};\\
0 &\textrm{if $n =0$};\\
-q^{-1}-q^{-2}-\cdots-q^{n} &\textrm{if $n < 0$}. \end{cases}\]
With these extended $q$-numbers in mind, we have the following $q$-analog of Lemma \ref{lem:otherchars}:

\begin{lemma}\label{lem:qotherchars}
Let $P\in\mathbb{Q}(q)[x]$ be a polynomial of degree $d$ with~$P([n]_q) \in \mathbb{Z}[q]$ for $n\in[0,d]$. Then:
\begin{enumerate}

\item $P([n]_q) \in \mathbb{Z}[q]$ for $n \in \mathbb{N}$ (in other words, $P \in \mathcal{R}_q^+$).

\item $P([n]_q) \in \mathbb{Z}[q,q^{-1}]$ for $n \in \mathbb{Z}$.

\end{enumerate}
\end{lemma}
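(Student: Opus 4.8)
The plan is to bootstrap off of Proposition~\ref{prop:qbinom} for part~(1) and then handle part~(2) by a short computation with $q$-numbers, in the spirit of the reduction in Lemma~\ref{lem:otherchars}.

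For part~(1), the point is that the interpolation argument proving Proposition~\ref{prop:qbinom} never uses more than the first $d+1$ of the numbers $P([n]_q)$. Concretely, I would re-run the same inductive construction $P_0(x) := P(0)$ and $P_k(x) := P_{k-1}(x) + (P([k]_q) - P_{k-1}([k]_q))\qbinom{x}{k}$ for $1 \le k \le d$. By induction each $P_k$ is a $\mathbb{Z}[q]$-linear combination of $\qbinom{x}{0}, \ldots, \qbinom{x}{k}$: indeed $P([k]_q) \in \mathbb{Z}[q]$ by hypothesis (this is the only place $k \le d$ is used) and $P_{k-1}([k]_q) \in \mathbb{Z}[q]$ since $\qbinom{[k]_q}{j} = \qbinom{k}{j}_q \in \mathbb{N}[q]$ by Lemma~\ref{lem:combint}. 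As $P - P_d$ has degree at most $d$ and vanishes at the $d+1$ distinct points $[0]_q, \ldots, [d]_q$, it is zero, so $P = P_d$ lies in the $\mathbb{Z}[q]$-span of the $\qbinom{x}{k}$; in particular $P \in \mathcal{R}_q^+$.

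For part~(2), write $P = \sum_{j=0}^{d} a_j(q)\qbinom{x}{j}$ with $a_j(q) \in \mathbb{Z}[q]$, as just produced. For $n \ge 0$, part~(1) already gives $P([n]_q) \in \mathbb{Z}[q] \subseteq \mathbb{Z}[q,q^{-1}]$, so it remains to treat $n = -r$ with $r \ge 1$, and for this it suffices to show $\qbinom{[-r]_q}{j} \in \mathbb{Z}[q,q^{-1}]$ for each $j$. The key input is the identity $[a]_q - [b]_q = q^b[a-b]_q$, which holds for all $a,b \in \mathbb{Z}$ directly from $[n]_q = (q^n-1)/(q-1)$. Substituting $x = [n]_q$ into the definition of $\qbinom{x}{j}$, this identity collapses the numerator and exactly cancels the factor $q^{\binom{j}{2}}$ in the denominator, yielding, for every integer $n$,
\[\qbinom{[n]_q}{j} = \frac{[n]_q\,[n-1]_q\cdots[n-j+1]_q}{[j]_q!}.\]
Then, using $[-s]_q = -q^{-s}[s]_q$ (again immediate from the formula for $[n]_q$), for $n = -r$ this rearranges to
\[\qbinom{[-r]_q}{j} = (-1)^j\, q^{-jr - \binom{j}{2}}\, \qbinom{r+j-1}{j}_q,\]
the $q$-analog of $\binom{-r}{j} = (-1)^j \binom{r+j-1}{j}$. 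Since $\qbinom{r+j-1}{j}_q \in \mathbb{N}[q]$ by Lemma~\ref{lem:combint}, the right-hand side is in $\mathbb{Z}[q,q^{-1}]$, and hence so is $P([-r]_q) = \sum_j a_j(q)\qbinom{[-r]_q}{j}$.

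There is no genuine obstacle here; the only care required is bookkeeping of powers of $q$ in the negative-argument computation (verifying that the product of the $q^m[n-m]_q$ really contributes exactly the compensating $q^{\binom{j}{2}}$, and tracking the sign $(-1)^j$ and the exponent $-jr-\binom{j}{2}$), together with the observation that the proof of Proposition~\ref{prop:qbinom} only consults $P([0]_q),\ldots,P([d]_q)$. One could alternatively attempt a direct analog of the proof of Lemma~\ref{lem:otherchars} via the $q$-difference operator $P(x) \mapsto (P(qx+1)-P(x))/((q-1)x+1)$, which has degree $d-1$ and takes the value $(P([n+1]_q)-P([n]_q))/q^n$ at $x = [n]_q$; but making that route work requires knowing $q^n \mid P([n+1]_q)-P([n]_q)$ in $\mathbb{Z}[q]$, which in turn seems to need essentially the content of Proposition~\ref{prop:qbinom}, so the interpolation argument above is the cleaner path.
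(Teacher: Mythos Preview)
Your proof is correct but takes a different route from the paper's. The paper proceeds by induction on $d$, exactly paralleling Lemma~\ref{lem:otherchars}: it sets $\widetilde{P}(x) := P(qx+1) - q^d P(x)$, which has degree $d-1$ (the factor $q^d$ is chosen to kill the leading term) and satisfies $\widetilde{P}([n]_q) = P([n+1]_q) - q^d P([n]_q) \in \mathbb{Z}[q]$ for $n \in [0,d-1]$, then telescopes to recover $P([n]_q)$ for all $n \in \mathbb{Z}$. Your approach instead observes that part~(1) is already implicit in the proof of Proposition~\ref{prop:qbinom} (the interpolation only consults $P([0]_q),\ldots,P([d]_q)$), and handles part~(2) by an explicit evaluation of $\qbinom{[-r]_q}{j}$. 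The paper's argument is self-contained and uniform across the two parts; yours is shorter given Proposition~\ref{prop:qbinom}, and as a bonus your part~(2) computation produces the identity $\qbinom{[-r]_q}{j} = (-1)^j q^{-jr-\binom{j}{2}}\qbinom{r+j-1}{j}_q$, which the paper only derives later (essentially Proposition~\ref{prop:invconsts}) by other means. Incidentally, the speculative ``alternative'' difference operator you mention at the end, $(P(qx+1)-P(x))/((q-1)x+1)$, is not the one the paper uses; the paper's choice $\widetilde{P}(x) = P(qx+1) - q^d P(x)$ stays polynomial in $x$ and avoids the divisibility concern you raised.
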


\begin{proof}
We will follow the previous proof pretty closely, inducting on the degree~$d$ with the $d=0$ case holding trivially. So we assume that $P(x)$ has degree greater than zero, define~$\widetilde{P}(x):= P(qx+1)-q^dP(x)$ and again note that~$\widetilde{P}$ is a degree~$d-1$ polynomial in $\mathbb{Q}(q)[x]$ with $\widetilde{P}([n]_q) \in \mathbb{Z}[q]$ for all~$n\in[0,d-1]$. Hence by induction $\widetilde{P}(x)$ satisfies the conclusions of the lemma. Finally we conclude the statement for $P$ since
\[ P([n]_q)= \begin{cases} 
      q^{nd}P([0]_q)+\sum_{i=0}^{n-1}q^{(n-1-i)d}\widetilde{P}([i]_q) & n\geq 0, \\
      q^{nd}P([0]_q)-\sum_{i=1}^{-n}q^{(n+i)d}\widetilde{P}([-i]_q) & n< 0;
   \end{cases}
\]
and by induction we know each term on the right hand side is in either $Z[q]$ or~$Z[q,q^{-1}]$ as required.
\end{proof}

The second part of the previous lemma suggests that if we want a $q$-analog of~$\mathcal{R}$ which is symmetric for the positive and negative $q$-integers, then we should have the target be $\mathbb{Z}[q,q^{-1}]$ rather than $\mathbb{Z}[q]$.  This motivates the following definition:
\[\mathcal{R}_q := \{P(x) \in \mathbb{Q}(q)[x]\colon P([n]_q) \in\mathbb{Z}[q,q^{-1}] \textrm{ for all } n \in \mathbb{Z}\}.\] 
This is clearly a $\mathbb{Z}[q,q^{-1}]$-algebra, which by the previous lemma contains $\mathcal{R}_q^+$ as a $\mathbb{Z}[q]$-subalgebra. The following proposition says there is essentially nothing else in $\mathcal{R}_q$.

\begin{proposition} \label{prop:localization} 
$\mathcal{R}_q = \mathcal{R}_{q}^{+} \otimes_{\mathbb{Z}[q]} \mathbb{Z}[q,q^{-1}]$ viewed as subrings of $\mathbb{Q}(q)[x]$.
\end{proposition}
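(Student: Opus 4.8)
The plan is to prove the stronger statement that $\mathcal{R}_q$ is \emph{freely} generated as a $\mathbb{Z}[q,q^{-1}]$-module by the very same $q$-binomial coefficient polynomials $\qbinom{x}{k}$, $k \in \mathbb{N}$, that freely generate $\mathcal{R}_q^+$ over $\mathbb{Z}[q]$ (Proposition~\ref{prop:qbinom}). The asserted equality is then immediate: for a module free over $\mathbb{Z}[q]$ on a given basis, extending scalars along $\mathbb{Z}[q] \hookrightarrow \mathbb{Z}[q,q^{-1}]$ (a localization) yields the module free over $\mathbb{Z}[q,q^{-1}]$ on that same basis, realized concretely as the $\mathbb{Z}[q,q^{-1}]$-span of the basis inside the common ambient ring $\mathbb{Q}(q)[x]$.

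First I would dispatch the easy inclusion $\mathcal{R}_q^+ \otimes_{\mathbb{Z}[q]} \mathbb{Z}[q,q^{-1}] \subseteq \mathcal{R}_q$. Since $\mathcal{R}_q^+$ is free, hence torsion-free, over $\mathbb{Z}[q]$, the natural map $\mathcal{R}_q^+ \otimes_{\mathbb{Z}[q]} \mathbb{Z}[q,q^{-1}] \to \mathcal{R}_q^+ \otimes_{\mathbb{Z}[q]} \mathbb{Q}(q) \subseteq \mathbb{Q}(q)[x]$ is injective, so the left-hand side is exactly the $\mathbb{Z}[q,q^{-1}]$-span of the $\qbinom{x}{k}$ inside $\mathbb{Q}(q)[x]$. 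As $\mathcal{R}_q$ is by definition a $\mathbb{Z}[q,q^{-1}]$-submodule of $\mathbb{Q}(q)[x]$, it is enough to check that each generator $\qbinom{x}{k}$ actually belongs to $\mathcal{R}_q$, i.e.\ that $\qbinom{[n]_q}{k} \in \mathbb{Z}[q,q^{-1}]$ for all $n \in \mathbb{Z}$. For $n \ge 0$ this value equals $\qbinom{n}{k}_q \in \mathbb{N}[q]$; for $n < 0$ it lies in $\mathbb{Z}[q,q^{-1}]$ by part~(2) of Lemma~\ref{lem:qotherchars} applied to $P(x) = \qbinom{x}{k} \in \mathcal{R}_q^+$ (whose defining hypothesis, integrality at $[0]_q,\ldots,[k]_q$, is clear).

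For the reverse inclusion $\mathcal{R}_q \subseteq \mathcal{R}_q^+ \otimes_{\mathbb{Z}[q]} \mathbb{Z}[q,q^{-1}]$, I would simply re-run the interpolation argument from the proof of Proposition~\ref{prop:qbinom}, replacing the coefficient ring $\mathbb{Z}[q]$ by $\mathbb{Z}[q,q^{-1}]$ throughout. Given $P(x) \in \mathcal{R}_q$ of degree $d$, the polynomials $P_0(x) := P(0)$ and $P_k(x) := P_{k-1}(x) + \bigl(P([k]_q) - P_{k-1}([k]_q)\bigr)\qbinom{x}{k}$ lie in the $\mathbb{Z}[q,q^{-1}]$-span of the $\qbinom{x}{k}$ (the only change is that $P([k]_q)$ is now merely in $\mathbb{Z}[q,q^{-1}]$), and $P - P_k$ vanishes at $x = [0]_q,[1]_q,\ldots,[k]_q$ thanks to $\qbinom{[m]_q}{k} = \qbinom{m}{k}_q$; the degree-$d$ comparison at $d+1$ points forces $P = P_d$. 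Note this direction uses only the values $P([n]_q)$ for $n \in \mathbb{N}$.

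Honestly there is no serious obstacle in this proposition: modulo the bookkeeping about localization of free modules, it is a mild variation on the proof of Proposition~\ref{prop:qbinom}. The one step that is not a pure formality — and the reason the proposition is placed after the results of this section — is the verification that the $\qbinom{x}{k}$ lie in $\mathcal{R}_q$ rather than only in $\mathcal{R}_q^+$, where part~(2) of Lemma~\ref{lem:qotherchars}, controlling $P([n]_q)$ for negative $n$, is exactly what is needed.
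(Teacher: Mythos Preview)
Your proof is correct. The forward inclusion is handled exactly as in the paper, via part~(2) of Lemma~\ref{lem:qotherchars}. For the reverse inclusion, however, you take a slightly different route: you re-run the Newton interpolation from the proof of Proposition~\ref{prop:qbinom} with coefficients in $\mathbb{Z}[q,q^{-1}]$, thereby obtaining directly that the $\qbinom{x}{k}$ form a $\mathbb{Z}[q,q^{-1}]$-basis of $\mathcal{R}_q$. The paper instead argues that for $P \in \mathcal{R}_q$ of degree $d$ one can choose $m$ so that $q^m P([0]_q),\ldots,q^m P([d]_q) \in \mathbb{Z}[q]$, whence $q^m P \in \mathcal{R}_q^{+}$ by part~(1) of Lemma~\ref{lem:qotherchars}, and therefore $P = q^{-m}(q^m P)$ lies in the $\mathbb{Z}[q,q^{-1}]$-span of $\mathcal{R}_q^{+}$. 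The paper's trick is a touch shorter because it reduces to the already-established Proposition~\ref{prop:qbinom} rather than repeating its proof; your approach has the virtue of making the $\mathbb{Z}[q,q^{-1}]$-basis statement explicit, which the paper only records afterward as a consequence.
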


\begin{proof} 
As mentioned above, the inclusion $\mathcal{R}_{q}^{+} \otimes_{\mathbb{Z}[q]} \mathbb{Z}[q,q^{-1}] \subseteq \mathcal{R}_{q}$ is immediate from the second part of Lemma $\ref{lem:qotherchars}$. To see the other direction, let~$P(x)\in \mathcal{R}_q$ be of degree $d$.  There is a positive integer~$m$ such that~$q^mP([0]_q)$, $q^mP([1]_q)$, $\ldots$, $q^mP([d]_q)$ are all in $\mathbb{Z}[q]$. By the first part of Lemma~\ref{lem:qotherchars} it follows that the polynomial $q^mP(x)$ is in $\mathcal{R}_q^+$. Hence we have~$P(x) \in \mathcal{R}_{q}^{+} \otimes_{\mathbb{Z}[q]} \mathbb{Z}[q,q^{-1}]$ as desired.
\end{proof}

Proposition~\ref{prop:localization} means that many results we have proved about $\mathcal{R}_q^{+}$ transfer directly to $\mathcal{R}$: for example, the $q$-binomial coefficient polynomials $\qbinom{x}{k}$ are a~$\mathbb{Z}[q,q^{-1}]$-basis of~$\mathcal{R}$ and their structure constants are still as in Theorem~\ref{thm:qbinommult}. 

There is an obvious counterpart to~$\mathcal{R}_q^{+}$ where we plug in negative $q$-numbers instead of positive ones. We define this ``negative'' part $\mathcal{R}_q^{-}$ of $\mathcal{R}_q$ as follows:
\[\mathcal{R}_q^{-} := \{P(x) \in \mathbb{Q}(q)[x]\colon P([-n]_q) \in\mathbb{Z}[q^{-1}] \textrm{ for all } n \in \mathbb{N}\}.\]
Note that $\mathcal{R}_q^{-}$ is a $\mathbb{Z}[q^{-1}]$-subalgebra of $\mathcal{R}_q$. We will see in Section~\ref{sec:involution} that in fact~$\mathcal{R}_q^{+} \simeq \mathcal{R}_q^{-}$ and this isomorphism is compatible with the obvious isomorphism~$\mathbb{Z}[q] \simeq \mathbb{Z}[q^{-1}]$. Hence we also have $\mathcal{R}_q = \mathcal{R}_q^{-} \otimes_{\mathbb{Z}[q^{-1}]} \mathbb{Z}[q]$ as subrings of~$\mathbb{Q}(q)[x]$ by Proposition~\ref{prop:localization}. Our main motivation for considering~$\mathcal{R}_q^{-}$ is that the ismorphism~$\mathcal{R}_q^{+} \simeq \mathcal{R}_q^{-}$ extends to an interesting involution of~$\mathcal{R}_q$.

Let us also briefly mention that in our setup there is another natural choice of generator~$z := (q-1)x + 1$ for $\mathbb{Q}(q)[x]$. Evaluating $x := [n]_q$ is the same as evaluating $z := q^n$ for all $n \in \mathbb{Z}$. And $x$ is obtainable from $z$ by the linear transformation $x = (q-1)^{-1}(z-1)$. Thus the ring of all polynomials $P \in \mathbb{Q}(q)[x]$ with $P(q^n) \in \mathbb{Z}[q]$ ($P(q^n) \in \mathbb{Z}[q,q^{-1}]$) for all $n \in \mathbb{N}$ ($n \in \mathbb{Z}$) is evidently isomorphic to $\mathcal{R}^{+}_q$ ($\mathcal{R}_q$). For $q$ specialized to a natural number this ``$z$-variable'' version of $\mathcal{R}_q^{+}$ is discussed in~\cite{gramain1990fonctions}. Going one step further, if we formally adjoin square roots~$K^2 = z$ and~$v^2 =q$, then the ring $\mathcal{R}_q[K,v]$ is equivalent to the Cartan part of Luztig's integral form of the quantum group $U_{v}(\mathfrak{sl}_2)$ \cite{lusztig1987modular}. Much of the theory developed here can be done just as easily in the $z$ or $K$ variable formulations of these rings. We chose the $x$ variable convention since it highlights the combinatorics and is the most transparent for specializing $q$ to $1$.

\end{section}
\begin{section}{A shift operator} \label{sec:shift}

The above proof of Lemma~\ref{lem:otherchars} relied on the fact that if $P(x) \in \mathcal{R}$ then the related polynomial~$P(x+1)$ is also in $\mathcal{R}$. In this section we study this operation of replacing $x$ by $x+1$ in more detail. So define the \emph{shift operator}~$S\colon \mathbb{Q}[x] \to \mathbb{Q}[x]$ to be the ring homomorphism given by~$S(x) := x+1$ and extended $\mathbb{Q}$-linearly. Note that $S$ is evidently invertible: we have~$S^{-1}(x) = x-1$. And also note that for $P \in \mathbb{Q}[x]$, we have $SP(n) = P(n+1)$ and $S^{-1}P(n) = P(n-1)$ for all~$n \in \mathbb{Z}$, which means that~$S$ restricts to an isomorphism $S\colon \mathcal{R}\to \mathcal{R}$. We can then ask how $S$ acts on the basis of binomial coefficient polynomials. The answer, thanks to Pascal's identity (the $q:=1$ specialization of~(\ref{eq:qpascal})) together with Lemma~\ref{lem:interpolation}, is that for all $k \in \mathbb{N}$
\begin{equation} \label{eq:binomshift}
S\binom{x}{k} = \binom{x}{k} + \binom{x}{k-1}
\end{equation}
with the convention that $\binom{x}{k} := 0$ for $k < 0$. From~(\ref{eq:binomshift}) it follows by induction that for all $m,k \in \mathbb{N}$ we have
\begin{equation} \label{eq:binommultishift}
S^m\binom{x}{k} = \sum_{i=0}^{m} \binom{m}{i}\binom{x}{k-i}.
\end{equation}
These shift operators appear in the translation between the binomial coefficient polynomial basis of $\mathcal{R}$ and the ``multichoose'' polynomial basis of $\mathcal{R}$. That is, if we set for all $k \in \mathbb{N}$
\[\multiset{x}{k} := S^{k-1}\binom{x}{k},\]
then it is well-known that for all $n\in\mathbb{N}$, $\multiset{n}{k}$ is the number of $k$-element multisets whose elements belong to $\{1,2,\ldots,n\}$ (see~\cite[pg.~26]{stanley1996ec1}). As we will see in Section~\ref{sec:involution}, we have $\binom{-x}{k} = (-1)^k\multiset{x}{k}$ and so the polynomials $(-1)^k\multiset{x}{k}$ for $k \in \mathbb{N}$ are also a $\mathbb{Z}$-basis for $\mathcal{R}$. A $q$-deformation of these multichoose polynomials will feature prominently in Section~\ref{sec:involution}, where we define a certain involution~\mbox{$\mathcal{R}_q \to \mathcal{R}_q$} (extending the involution~$\mathcal{R} \to \mathcal{R}$ given by $x \mapsto -x$) that restricts to an isomorphism~$\mathcal{R}_q^{+} \xrightarrow{\sim} \mathcal{R}_q^{-}$. But first we need a $q$-deformation of the shift operator. 

It is easy to $q$-deform the shift operator: we define $S\colon \mathbb{Q}(q)[x] \to\mathbb{Q}(q)[x]$ as the ring homomorphism with~$S(x) := qx + 1$ extended $\mathbb{Q}(q)$-linearly. Once again~$S$ is evidently invertible: $S^{-1}(x) = q^{-1}(x-1)$.  For $P \in \mathbb{Q}(q)[x]$, we have that~$SP([n]_q) = P([n+1]_q)$ and~$S^{-1}P([n]_q) = P([n-1]_q)$ for all~$n \in \mathbb{Z}$, which means that~$S$ restricts to an isomorphism $S\colon \mathcal{R}_q\to \mathcal{R}_q$. We can analogously ask how $S$ acts on the basis of $q$-binomial coefficient polynomials. The answer, thanks to the $q$-Pascal's identity~(\ref{eq:qpascal}) together with Lemma~\ref{lem:interpolation}, is that for all~$k \in \mathbb{N}$ we have
\begin{equation} \label{eq:qbinomshift}
S\qbinom{x}{k} = q^{k}\qbinom{x}{k} + \qbinom{x}{k-1},
\end{equation}
with the convention that $\qbinom{x}{k} := 0$ for $k < 0$. From~(\ref{eq:qbinomshift}) it follows by induction that for all $m,k \in \mathbb{N}$ we have
\begin{equation} \label{eq:qbinommultishift}
S^m\qbinom{x}{k} = \sum_{i=0}^{m} q^{(m-i)(k-i)}\qbinom{m}{i}_q \qbinom{x}{k-i}.
\end{equation}
Indeed, it is clear that~(\ref{eq:qbinommultishift}) holds for $m=0$. Then, supposing that~(\ref{eq:qbinommultishift}) holds for $m-1$, we have
\begin{align*}
S^m\qbinom{x}{k} &= \sum_{i=0}^{m-1} q^{(m-1-i)(k-i)}\qbinom{m-1}{i}_q S\qbinom{x}{k-i} \\
&= \sum_{i=0}^{m-1} q^{(m-1-i)(k-i)}\qbinom{m-1}{i}_q \left(q^{k-i}\qbinom{x}{k-i} + \qbinom{x}{k-(i+1)} \right) \\
&= \sum_{i=0}^{m} \left(q^{(m-1-i)(k-i)+(k-i)}\qbinom{m-1}{i}_q+q^{(m-i)(k-i+1)}\qbinom{m-1}{i-1}_q\right)\qbinom{x}{k-i} \\
&= \sum_{i=0}^{m} q^{(m-i)(k-i)}\left(\qbinom{m-1}{i}_q+q^{m-i}\qbinom{m-1}{i-1}_q\right)\qbinom{x}{k-i}\\
&= \sum_{i=0}^{m} q^{(m-i)(k-i)}\qbinom{m}{i}_q\qbinom{x}{k-i},
\end{align*}
as desired. In the last line of this computation we used the other $q$-Pascal's identity for $n,k\in\mathbb{N}$:
\[\qbinom{n}{k}_q = \qbinom{n-1}{k}_q + q^{n-k}\qbinom{n-1}{k-1}_q,\]
which follows from~(\ref{eq:qpascal}) by the symmetry~$\qbinom{n}{k}_q = \qbinom{n}{n-k}_q$. It is also straightforward to prove~(\ref{eq:qbinommultishift}) bijectively using Young diagrams. 

Lastly, we remark that $S$ restricts to a $\mathbb{Z}[q]$-linear map $S\colon \mathcal{R}_q^{+}\to \mathcal{R}_q^{+}$ and~$S^{-1}$ restricts to a $\mathbb{Z}[q^{-1}]$-linear map $S^{-1}\colon \mathcal{R}_q^{-}\to \mathcal{R}_q^{-}$, but these shift operators are no longer invertible when restricted to $\mathcal{R}_q^{+}$ or $\mathcal{R}_q^{-}$. However, we can easily describe the images of $\mathcal{R}_q^{+}$ and~$\mathcal{R}_q^{-}$ under the shift operator. For~$m \in \mathbb{Z}$ define
\begin{align*}
\mathcal{R}_q^{+,m} &:= \{P \in \mathbb{Q}(q)[x]\colon P([n]_q) \in \mathbb{Z}[q] \textrm{ for all } n\in\mathbb{Z}, n \geq m\};\\
\mathcal{R}_q^{-,m} &:= \{P \in \mathbb{Q}(q)[x]\colon P([n]_q) \in \mathbb{Z}[q^{-1}] \textrm{ for all } n\in\mathbb{Z}, n \leq m\}.
\end{align*}
Thus $R_q^{+} = R_q^{+,0}$, $R_q^{-} = R_q^{-,0}$, and
\begin{align*}
\cdots \subseteq \mathcal{R}_q^{+,-2} \subseteq \mathcal{R}_q^{+,-1} &\subseteq \mathcal{R}_q^{+} \subseteq \mathcal{R}_q^{+,1} \subseteq \mathcal{R}_q^{+,2} \subseteq \cdots; \\
\cdots \supseteq \mathcal{R}_q^{-,-2} \supseteq \mathcal{R}_q^{-,-1} &\supseteq \mathcal{R}_q^{-} \supseteq \mathcal{R}_q^{-,1} \supseteq \mathcal{R}_q^{-,2} \supseteq \cdots.
\end{align*}

\begin{proposition}
For $m \in \mathbb{Z}$, $S^m\colon \mathcal{R}_q^{+} \xrightarrow{\sim} \mathcal{R}_q^{+,-m}$ is an isomorphism of~$\mathbb{Z}[q]$-algebras, and $S^{-m}\colon \mathcal{R}_q^{-} \xrightarrow{\sim} \mathcal{R}_q^{-,m}$ is an isomorphism of~$\mathbb{Z}[q^{-1}]$-algebras.
\end{proposition}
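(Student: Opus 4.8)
The plan is to deduce everything from the single identity $S^a P([n]_q) = P([n+a]_q)$, valid for all $a \in \mathbb{Z}$, $n \in \mathbb{Z}$, and $P \in \mathbb{Q}(q)[x]$. This is just the $m$-fold iteration of the already-recorded fact $SP([n]_q) = P([n+1]_q)$ (which itself comes from $S(x) = qx+1$ and $[n+1]_q = q[n]_q+1$). Since $S$ is a ring automorphism of $\mathbb{Q}(q)[x]$ fixing $q$ — hence fixing $\mathbb{Z}[q]$ and $\mathbb{Z}[q^{-1}]$ pointwise — so is every power $S^a$, and $S^a$ and $S^{-a}$ are mutually inverse on $\mathbb{Q}(q)[x]$. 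Thus it suffices to check that the powers of $S$ carry the subrings $\mathcal{R}_q^{+,b}$ (and $\mathcal{R}_q^{-,b}$) onto one another in the claimed fashion; compatibility with the ring and module structures is then automatic because the maps in question are restrictions of $\mathbb{Z}[q]$-algebra (resp. $\mathbb{Z}[q^{-1}]$-algebra) automorphisms of $\mathbb{Q}(q)[x]$.

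First I would establish the general one-sided containment: for all $a,b \in \mathbb{Z}$ one has $S^a(\mathcal{R}_q^{+,b}) \subseteq \mathcal{R}_q^{+,b-a}$. Indeed, if $P \in \mathcal{R}_q^{+,b}$ and $n \geq b-a$, then $n+a \geq b$, so $S^a P([n]_q) = P([n+a]_q) \in \mathbb{Z}[q]$ by the defining property of $\mathcal{R}_q^{+,b}$; hence $S^a P \in \mathcal{R}_q^{+,b-a}$. Specializing $(a,b) = (m,0)$ gives $S^m(\mathcal{R}_q^{+}) \subseteq \mathcal{R}_q^{+,-m}$, and specializing $(a,b) = (-m,-m)$ gives $S^{-m}(\mathcal{R}_q^{+,-m}) \subseteq \mathcal{R}_q^{+,0} = \mathcal{R}_q^{+}$. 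Since $S^m \circ S^{-m} = S^{-m} \circ S^m = \mathrm{id}$ on $\mathbb{Q}(q)[x]$, these two containments force $S^m$ and $S^{-m}$ to restrict to mutually inverse bijections between $\mathcal{R}_q^{+}$ and $\mathcal{R}_q^{+,-m}$. Each is a restriction of a ring automorphism of $\mathbb{Q}(q)[x]$ fixing $q$, so each is a $\mathbb{Z}[q]$-algebra isomorphism, giving the first assertion.

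The second assertion is the mirror image: one shows $S^a(\mathcal{R}_q^{-,b}) \subseteq \mathcal{R}_q^{-,b-a}$ for all $a,b$ — now using $n \leq b-a \iff n+a \leq b$ and the fact that the relevant evaluations lie in $\mathbb{Z}[q^{-1}]$ — then specializes $(a,b) = (-m,0)$ and $(a,b)=(m,m)$ and invokes $S^{-m}\circ S^m = \mathrm{id}$, observing that $S^{-1}$ fixes $q^{-1}$ and hence $\mathbb{Z}[q^{-1}]$. I do not expect a genuine obstacle here: the proof is essentially a bookkeeping argument about integrality on shifted half-lines of integers. The only points that require care are the direction of the index shift (note the built-in sign flip: $S^m$ sends $\mathcal{R}_q^{+} = \mathcal{R}_q^{+,0}$ to $\mathcal{R}_q^{+,-m}$, not $\mathcal{R}_q^{+,m}$) and the asymmetry that $S$ preserves $\mathbb{Z}[q]$ but $S^{-1}$ does not — precisely the reason, as already noted in the text, that $S$ is not invertible on $\mathcal{R}_q^{+}$ even though it becomes invertible on the larger ring $\mathcal{R}_q$.
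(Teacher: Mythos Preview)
Your proof is correct and follows essentially the same approach as the paper's: both establish the two containments $S^m(\mathcal{R}_q^{+}) \subseteq \mathcal{R}_q^{+,-m}$ and $S^{-m}(\mathcal{R}_q^{+,-m}) \subseteq \mathcal{R}_q^{+}$ directly from the identity $S^a P([n]_q) = P([n+a]_q)$, then conclude bijectivity from $S^m \circ S^{-m} = \mathrm{id}$. Your version is slightly more polished in that you state and prove the general containment $S^a(\mathcal{R}_q^{+,b}) \subseteq \mathcal{R}_q^{+,b-a}$ once and then specialize, and you are explicit about why the restricted maps are $\mathbb{Z}[q]$-algebra homomorphisms; but the substance is identical.
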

\begin{proof}
We check that $S^{m}\mathcal{R}_q^{+} \subseteq  \mathcal{R}_q^{+,-m}$ and $S^{-m}\mathcal{R}_q^{+,-m} \subseteq \mathcal{R}_q^{+}$: for~$P(x) \in \mathcal{R}_q^{+}$, we have $S^{m}P([n]_q) = P([n+m]_q) \in \mathbb{Z}[q,q^{-1}]$ for all $n \in \mathbb{Z}$ with $n \geq m$, so indeed~$S^{m}P \in \mathcal{R}_q^{+,-m}$; similarly, for~$P(x) \in \mathcal{R}_q^{+,-m}$, we have $S^{-m}P([n]_q) = P([n-m]_q) \in \mathbb{Z}[q,q^{-1}]$ for all $n \in \mathbb{N}$, so indeed $S^{-m}P \in \mathcal{R}_q^{+}$. The statement about $\mathcal{R}_q^{-}$ is analogous.
\end{proof}

\end{section}

\begin{section}{A bar involution} \label{sec:involution}
The ring $\mathbb{Z}[q,q^{-1}]$ of Laurent polynomials has an obvious $\mathbb{Z}$-linear involution given by~$q \mapsto q^{-1}$. This involution is fundamental in Kazhdan-Lusztig theory~\cite{kazhdan1979representations}, where it is extended to a \emph{bar involution} (or \emph{bar operator}) of the Hecke algebra of a Coxeter group. Thus we refer to the involution $q \mapsto q^{-1}$ of~$\mathbb{Z}[q,q^{-1}]$ as the \emph{bar involution} $\overline{\raisebox{0.5em}{\;\;}}\colon \mathbb{Z}[q,q^{-1}]\to \mathbb{Z}[q,q^{-1}]$ as well. Accordingly, we write $\overline{c}$ for the result of applying this involution to an element $c \in \mathbb{Z}[q,q^{-1}]$. We will now describe an extension of the bar involution to~$\mathcal{R}_q$. 

First note that $\overline{\raisebox{0.5em}{\;\;}}\colon \mathbb{Z}[q,q^{-1}]\to \mathbb{Z}[q,q^{-1}]$ extends uniquely to a field involution~$\overline{\raisebox{0.5em}{\;\;}}\colon \mathbb{Q}(q) \to \mathbb{Q}(q)$. And there is a unique extension of~$\overline{\raisebox{0.5em}{\;\;}}\colon \mathbb{Q}(q) \to \mathbb{Q}(q)$ to a ring involution~$\overline{\raisebox{0.5em}{\;\;}}\colon \mathbb{Q}(q)[x] \to \mathbb{Q}(q)[x]$ satisfying $\overline{x} := -qx$. If we write any~$P(x) \in \mathbb{Q}(q)[x]$ as $P(x) = c_0 + c_1x+\cdots + c_kx^k$ with $c_i \in \mathbb{Q}(q)$, then for all~$n \in \mathbb{Z}$ we have~$\overline{P}([-n]_q) = \overline{c_0} + \overline{c_1}\overline{[n]_{q}} + \cdots + \overline{c_k}(\overline{[n]_{q}})^{k}$, which means that~$\overline{\overline{P}([-n]_q)} = P([n]_q)$ for all $n \in \mathbb{Z}$. In other words, we have the following commutative diagram for all $n \in \mathbb{Z}$: \\
\centerline{
\xymatrix{
\mathbb{Q}(q)[x] \ar[d]_{P(x) \mapsto \overline{P(x)}} \ar[rr]^{x\mapsto [n]_q}& & \mathbb{Q}(q) \ar[d]^{c\mapsto \overline{c}} \\
\mathbb{Q}(q)[x] \ar[rr]^{x\mapsto [-n]_q}& & \mathbb{Q}(q)
}}
Now suppose that $P(x) \in \mathcal{R}_q$. Thus $P([n]_q) \in \mathbb{Z}[q,q^{-1}]$ for all $n \in \mathbb{Z}$. So as a result of the above diagram, $\overline{P}([-n]_q) \in \overline{\mathbb{Z}[q,q^{-1}]} = \mathbb{Z}[q,q^{-1}]$ for all $n \in \mathbb{Z}$. Thus~$\overline{\raisebox{0.5em}{\;\;}}\colon \mathbb{Q}(q)[x] \to \mathbb{Q}(q)[x]$ restricts to a ring involution~$\overline{\raisebox{0.5em}{\;\;}}\colon \mathcal{R}_q \to \mathcal{R}_q$, which again we call the bar involution. Also as a result of the above commutative diagram, the bar involution restricts to an isomorphism $\overline{\raisebox{0.5em}{\;\;}}\colon \mathcal{R}_q^{+}\xrightarrow{\sim} \mathcal{R}_q^{-}$ that respects the obvious isomorphism $\mathbb{Z}[q] \xrightarrow{\sim} \mathbb{Z}[q^{-1}]$.

The relationship between the shift operator and the bar involution is as follows.

\begin{proposition} \label{prop:otherinvs}
For all $P(x) \in \mathcal{R}_q$, we have~$\overline{SP(x)} = S^{-1}\overline{P(x)}$. Thus for all $m \in \mathbb{Z}$, the map~$P(x) \mapsto S^{m}\overline{P(x)}$ is an involution $\mathcal{R}_q \to \mathcal{R}_q$.
\end{proposition}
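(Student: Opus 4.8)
The plan is to establish the identity $\overline{SP(x)}=S^{-1}\overline{P(x)}$ by checking that $\overline{\raisebox{0.5em}{\;\;}}\circ S$ and $S^{-1}\circ\overline{\raisebox{0.5em}{\;\;}}$ are literally the same ring homomorphism $\mathbb{Q}(q)[x]\to\mathbb{Q}(q)[x]$, and then to bootstrap the involution statement from it. Since $S$, $S^{-1}$, and the bar involution are all ring homomorphisms of $\mathbb{Q}(q)[x]$, so are both composites; two ring homomorphisms out of $\mathbb{Q}(q)[x]$ coincide as soon as they agree on $\mathbb{Q}(q)$ and on $x$. On $\mathbb{Q}(q)$ both composites act as the field involution $q\mapsto q^{-1}$, because $S$ and $S^{-1}$ are $\mathbb{Q}(q)$-linear. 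On $x$ one computes directly $\overline{S(x)}=\overline{qx+1}=q^{-1}(-qx)+1=1-x$ and $S^{-1}(\overline{x})=S^{-1}(-qx)=-q\cdot q^{-1}(x-1)=1-x$. Hence $\overline{SP(x)}=S^{-1}\overline{P(x)}$ holds for every $P(x)\in\mathbb{Q}(q)[x]$, in particular for every $P(x)\in\mathcal{R}_q$. (Alternatively one could evaluate both sides at the points $x=[-n]_q$, $n\in\mathbb{Z}$: using $SP([n]_q)=P([n+1]_q)$, $S^{-1}Q([n]_q)=Q([n-1]_q)$, and the commutative square relating the bar involution with $x\mapsto[n]_q$ and $x\mapsto[-n]_q$, both sides evaluate to $\overline{P([n+1]_q)}$, so they agree as polynomials; but the generator check is cleaner.)

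Next I would upgrade this to the commutation relation $\overline{S^m Q(x)}=S^{-m}\overline{Q(x)}$ valid for all $m\in\mathbb{Z}$ and all $Q(x)\in\mathcal{R}_q$. Applying the basic identity with $P(x)$ replaced by $S^{-1}P(x)$ gives $\overline{S^{-1}P(x)}=S\overline{P(x)}$, so the relation $\overline{S^{\pm1}Q(x)}=S^{\mp1}\overline{Q(x)}$ holds, and the general case follows by a trivial induction on $|m|$. I would also record here that each $S^m$ restricts to an automorphism of $\mathcal{R}_q$ and the bar involution restricts to an involution of $\mathcal{R}_q$, as already established earlier in this section, so the map $P(x)\mapsto S^m\overline{P(x)}$ genuinely sends $\mathcal{R}_q$ into $\mathcal{R}_q$.

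Finally, to see this map is an involution, write $T_m$ for $P(x)\mapsto S^m\overline{P(x)}$ and compute $T_m(T_m(P(x)))=S^m\,\overline{S^m\overline{P(x)}}$. Pushing the bar past $S^m$ via the relation just proved, $\overline{S^m\overline{P(x)}}=S^{-m}\,\overline{\overline{P(x)}}=S^{-m}P(x)$, since the bar involution is an involution. Therefore $T_m(T_m(P(x)))=S^mS^{-m}P(x)=P(x)$, as desired.

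I do not anticipate a genuine obstacle here: the only point needing a moment's care is that the commutation relation $\overline{S^m\,(\cdot)}=S^{-m}\,\overline{(\cdot)}$ must be available for negative $m$ as well as positive, which is precisely why I isolate the observation $\overline{S^{-1}P(x)}=S\overline{P(x)}$ before inducting on $|m|$.
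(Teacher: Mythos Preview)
Your proof is correct. The only difference from the paper is in how you verify the first identity $\overline{SP(x)}=S^{-1}\overline{P(x)}$: the paper invokes interpolation, checking that both sides evaluate to the same thing at $x=[n]_q$ for all $n\in\mathbb{N}$ (both give $\overline{P([-n+1]_q)}$), whereas you verify equality of the two ring homomorphisms $\overline{\raisebox{0.5em}{\;\;}}\circ S$ and $S^{-1}\circ\overline{\raisebox{0.5em}{\;\;}}$ on the generators $\mathbb{Q}(q)$ and $x$. Your route is arguably cleaner, since it avoids any appeal to an interpolation principle and works directly in $\mathbb{Q}(q)[x]$; the paper's route has the virtue of staying within the evaluation framework used throughout. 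For the involution statement both you and the paper compute $S^m\overline{S^m\overline{P(x)}}$ and simplify via the commutation relation; you are simply more explicit in first extending $\overline{S^{\pm1}(\cdot)}=S^{\mp1}\overline{(\cdot)}$ to all powers $m\in\mathbb{Z}$, which the paper uses without comment.
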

\begin{proof}
For the first statement, by Lemma~\ref{lem:interpolation} we need only check that 
\[\overline{SP(x)}\rvert_{x:=[n]_q} = S^{-1}\overline{P(x)}\rvert_{x:=[n]_q}\] 
for all $n \in \mathbb{N}$. But by what we already know about the shift operator and the bar involution, these are both equal to $P([-n-1]_q)$. As for the second statement: by the first statement, we have $S^m\overline{S^m\overline{P(x)}} = \overline{S^{-m}S^{m}\overline{P(x)}} = \overline{\overline{P(x)}} = P(x)$ for all~$m \in \mathbb{Z}$ and all~$P(x) \in \mathcal{R}_q$.
\end{proof}

We also have the following corollary of the existence of the bar involution, giving a distinguished basis for $\mathcal{R}_q^{-}$.

\begin{corollary} \label{cor:negqbinom}
$\mathcal{R}_q^{-}$ is freely generated as a $\mathbb{Z}[q^{-1}]$-module by~$\overline{ \qbinom{x}{k}}$ for $k\in \mathbb{N}$. For $i,j \in \mathbb{N}$ these basis elements multiply as
\[ \overline{ \qbinom{x}{k}} \; \overline{ \qbinom{x}{j}} = \sum_{k = \mathrm{max}(i,j)}^{i+j}  \frac{q^{i(i-k)+j(j-k)} \, [k]_q!}{[k-i]_q![k-j]_q![i+j-k]_q!} \overline{ \qbinom{x}{k}}.\]
In particular, the $\mathbb{Z}[q^{-1}]$-algebra $\mathcal{R}^{-}_q$ with distinguished basis $\{\overline{ \qbinom{x}{k}}\colon k \in \mathbb{N}\}$ has structure constants in~$\mathbb{N}[q^{-1}]$.
\end{corollary}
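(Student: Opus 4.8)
\textbf{Proof plan for Corollary~\ref{cor:negqbinom}.}
The plan is to transport everything through the bar involution, using that $\overline{\raisebox{0.5em}{\;\;}}\colon \mathcal{R}_q^{+}\xrightarrow{\sim} \mathcal{R}_q^{-}$ is a ring isomorphism compatible with $\mathbb{Z}[q]\xrightarrow{\sim}\mathbb{Z}[q^{-1}]$, together with the results of Proposition~\ref{prop:qbinom} and Theorem~\ref{thm:qbinommult} for $\mathcal{R}_q^{+}$. First I would note that since $\qbinom{x}{k}$ for $k\in\mathbb{N}$ is a $\mathbb{Z}[q]$-module basis of $\mathcal{R}_q^{+}$ and $\overline{\raisebox{0.5em}{\;\;}}$ is an additive bijection onto $\mathcal{R}_q^{-}$ sending $\mathbb{Z}[q]$-combinations to $\mathbb{Z}[q^{-1}]$-combinations, the images $\overline{\qbinom{x}{k}}$ form a $\mathbb{Z}[q^{-1}]$-module basis of $\mathcal{R}_q^{-}$; this gives the first sentence with essentially no work. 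Then, applying $\overline{\raisebox{0.5em}{\;\;}}$ to the identity of Theorem~\ref{thm:qbinommult} and using that it is a ring homomorphism, I get
\[\overline{\qbinom{x}{i}}\;\overline{\qbinom{x}{j}} = \sum_{k=\mathrm{max}(i,j)}^{i+j}\overline{\left(\frac{q^{(k-i)(k-j)}[k]_q!}{[k-i]_q![k-j]_q![i+j-k]_q!}\right)}\;\overline{\qbinom{x}{k}},\]
so everything reduces to computing the bar of the structure constant.

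The main (and only real) computation is therefore to check that
\[\overline{\left(\frac{q^{(k-i)(k-j)}[k]_q!}{[k-i]_q![k-j]_q![i+j-k]_q!}\right)} = \frac{q^{i(i-k)+j(j-k)}[k]_q!}{[k-i]_q![k-j]_q![i+j-k]_q!}.\]
Here I would use the elementary identity $\overline{[n]_q} = q^{-(n-1)}[n]_q$, valid for $n\in\mathbb{N}$, which follows directly from $[n]_q = 1+q+\cdots+q^{n-1}$. Hence $\overline{[n]_q!} = q^{-\binom{n}{2}}[n]_q!$, and the bar of a ratio of $q$-factorials $\frac{[k]_q!}{[k-i]_q![k-j]_q![i+j-k]_q!}$ picks up the power $q^{-\binom{k}{2}+\binom{k-i}{2}+\binom{k-j}{2}+\binom{i+j-k}{2}}$. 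Combining this with $\overline{q^{(k-i)(k-j)}} = q^{-(k-i)(k-j)}$ and simplifying the resulting exponent of $q$ (a routine manipulation of binomial coefficients: $-\binom{k}{2}+\binom{k-i}{2}+\binom{k-j}{2}+\binom{i+j-k}{2}-(k-i)(k-j)$ should collapse to $i(i-k)+j(j-k)$) yields the claimed formula. This bookkeeping with quadratic exponents is the one place where care is needed, but it is a finite algebraic check.

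Finally, for the positivity statement: the original structure constants lie in $\mathbb{N}[q]$ by Theorem~\ref{thm:qbinommult}, and applying the bar involution sends $\mathbb{N}[q]$ into $\mathbb{N}[q^{-1}]$, so the new structure constants lie in $\mathbb{N}[q^{-1}]$; one can also see this directly from the product form $q^{i(i-k)+j(j-k)}\qbinom{k}{i}_q\qbinom{i}{k-j}_q$ (or the analogous symmetric product) since each $q$-binomial coefficient is in $\mathbb{N}[q]$ and the prefactor exponent $i(i-k)+j(j-k)$ is nonpositive for $k\geq\mathrm{max}(i,j)$, making the whole term a nonnegative integer combination of powers $q^{-t}$ with $t\geq0$. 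I expect no genuine obstacle here; the proof is almost entirely formal once the exponent identity is verified.
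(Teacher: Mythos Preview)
Your proposal is correct and follows essentially the same approach as the paper's own proof: both transport Proposition~\ref{prop:qbinom} and Theorem~\ref{thm:qbinommult} through the bar involution isomorphism $\mathcal{R}_q^{+}\xrightarrow{\sim}\mathcal{R}_q^{-}$, using $\overline{[n]_q}=q^{-(n-1)}[n]_q$ to compute the barred structure constants. You simply flesh out the exponent bookkeeping that the paper leaves implicit.
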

\begin{proof}
That the $\overline{ \qbinom{x}{k}}$ give a $\mathbb{Z}[q^{-1}]$-basis for $\mathcal{R}_q^{-}$ follows immediately from the bar involution isomorphism. The formula for the structure constants is obtained by applying the bar involution to the formula in Theorem~\ref{thm:qbinommult}. Note especially that we have~$\overline{[n]_q} = q^{-(n-1)}[n]_q$ for all $n\in\mathbb{Z}$.
\end{proof}

As a consequence of Proposition~\ref{prop:localization} together with Proposition~\ref{prop:qbinom} and Corollary~\ref{cor:negqbinom}, both $\qbinom{x}{k}$ and $\overline{ \qbinom{x}{k}}$ are $\mathbb{Z}[q,q^{-1}]$-bases of~$\mathcal{R}_q$. So it makes sense to ask how to write one of these bases is terms of the other. The answer is given by the following proposition.

\begin{proposition} \label{prop:invconsts}
For all $k \in \mathbb{N}$ we have
\begin{align*}
\overline{ \qbinom{x}{k}} &= (-1)^k q^{\binom{k+1}{2}}S^{k-1}\qbinom{x}{k} \\
&= (-1)^k \sum_{i=0}^{k-1} q^{\binom{k+1}{2}+(k-1-i)(k-i)} \qbinom{k-1}{i}_q \qbinom{x}{k-i}
\end{align*}
In particular the coefficients expressing $\overline{ \qbinom{x}{k}}$ in the basis of $\qbinom{x}{i}$ are in~$(-1)^k\mathbb{N}[q]$, which means that in fact $\overline{ \qbinom{x}{k}} \in \mathcal{R}_q^{+}$.

For all $k \in \mathbb{N}$ we also have
\begin{align*}
\qbinom{x}{k} &= (-1)^k q^{-\binom{k+1}{2}}S^{-(k-1)}\overline{ \qbinom{x}{k}} \\
&=(-1)^k \sum_{i=0}^{k-1} q^{-\binom{k+1}{2}+(i-k+1)k} \qbinom{k-1}{i}_q \overline{ \qbinom{x}{k-i}},
\end{align*}
In particular the coefficients expressing~$\qbinom{x}{k}$ in the basis of $\overline{ \qbinom{x}{i}}$ are in~$(-1)^k\mathbb{N}[q^{-1}]$, which means that in fact $\qbinom{x}{k} \in \mathcal{R}_q^{-}$.

\end{proposition}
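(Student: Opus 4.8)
The plan is to prove the first equality in the first display, namely
$\overline{\qbinom{x}{k}} = (-1)^k q^{\binom{k+1}{2}} S^{k-1}\qbinom{x}{k}$, directly by interpolation; everything else is then bookkeeping. The second equality in that display is just Equation~(\ref{eq:qbinommultishift}) specialized to $m = k-1$, after pulling the scalar $(-1)^k q^{\binom{k+1}{2}}$ inside the sum. Both equalities in the second display follow by applying the bar involution to the first display, using $\overline{\overline{P(x)}} = P(x)$, Proposition~\ref{prop:otherinvs} in the iterated form $\overline{S^{k-1}P(x)} = S^{-(k-1)}\overline{P(x)}$, and the identity $\overline{\qbinom{n}{k}_q} = q^{-k(n-k)}\qbinom{n}{k}_q$ (immediate from $\overline{[n]_q} = q^{-(n-1)}[n]_q$ recorded in the proof of Corollary~\ref{cor:negqbinom}, or from the complementation $\lambda \mapsto (n-k)^k \setminus \lambda$ on the Young diagrams of Lemma~\ref{lem:combint}); barring the second line of the first display, together with $\overline{\qbinom{k-1}{i}_q} = q^{-i(k-1-i)}\qbinom{k-1}{i}_q$, collapses the exponent of $q$ to $-\binom{k+1}{2}+(i-k+1)k$, which is the second line of the second display.

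To prove the key identity: both sides are polynomials in $\mathbb{Q}(q)[x]$ of degree $k$, and $[0]_q,[1]_q,\ldots,[k]_q$ are $k+1$ distinct elements of $\mathbb{Q}(q)$, so (exactly as in Lemma~\ref{lem:interpolation}) it suffices to check agreement after the substitution $x := [n]_q$ for all $n \in \mathbb{N}$. On the right-hand side, the shift property $S^{k-1}P([n]_q) = P([n+k-1]_q)$ gives the value $(-1)^k q^{\binom{k+1}{2}}\qbinom{n+k-1}{k}_q$. On the left-hand side, the commutative square of Section~\ref{sec:involution} gives $\overline{\qbinom{x}{k}}\rvert_{x:=[n]_q} = \overline{\qbinom{[-n]_q}{k}}$, so I must evaluate the polynomial $\qbinom{x}{k}$ at the negative $q$-integer $x:=[-n]_q$. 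Here the crucial input is the telescoping identity $[-n]_q - [j]_q = -q^{-n}[n+j]_q$, which turns the numerator $\prod_{j=0}^{k-1}([-n]_q - [j]_q)$ into $(-1)^k q^{-nk}\,[n+k-1]_q!/[n-1]_q!$ and hence yields $\qbinom{[-n]_q}{k} = (-1)^k q^{-nk-\binom{k}{2}}\qbinom{n+k-1}{k}_q$. Applying the bar involution and then $\overline{\qbinom{n+k-1}{k}_q} = q^{-k(n-1)}\qbinom{n+k-1}{k}_q$, the three powers of $q$ collapse, via $\binom{k}{2}+k = \binom{k+1}{2}$, to exactly $(-1)^k q^{\binom{k+1}{2}}\qbinom{n+k-1}{k}_q$, matching the right-hand side.

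For the membership and positivity assertions: in the second line of the first display every exponent $\binom{k+1}{2}+(k-1-i)(k-i)$ is nonnegative and $\qbinom{k-1}{i}_q \in \mathbb{N}[q]$ by Lemma~\ref{lem:combint}, so each coefficient lies in $(-1)^k\mathbb{N}[q]$; since the $\qbinom{x}{j}$ form a $\mathbb{Z}[q]$-basis of $\mathcal{R}_q^{+}$ (Proposition~\ref{prop:qbinom}), this shows $\overline{\qbinom{x}{k}} \in \mathcal{R}_q^{+}$. The dual statement needs no new computation: expanding $\qbinom{x}{k} = \overline{\overline{\qbinom{x}{k}}}$ converts coefficients in $(-1)^k\mathbb{N}[q]$ into coefficients in $(-1)^k\overline{\mathbb{N}[q]} = (-1)^k\mathbb{N}[q^{-1}]$ attached to the $\mathbb{Z}[q^{-1}]$-basis $\overline{\qbinom{x}{j}}$ of $\mathcal{R}_q^{-}$ (Corollary~\ref{cor:negqbinom}), whence $\qbinom{x}{k} \in \mathcal{R}_q^{-}$.

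The main obstacle is the evaluation of $\qbinom{x}{k}$ at $x:=[-n]_q$ in the second paragraph: one must establish $[-n]_q-[j]_q = -q^{-n}[n+j]_q$ correctly and then carefully track the separate powers of $q$ coming from this identity, from the factor $q^{\binom{k}{2}}[k]_q!$ in the denominator of $\qbinom{x}{k}$, and from barring a $q$-binomial coefficient, verifying that they telescope to $q^{\binom{k+1}{2}}$. Once that computation is in hand, the remaining work is routine manipulation with Equation~(\ref{eq:qbinommultishift}) and the bar involution.
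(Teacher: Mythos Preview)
Your proposal is correct and follows essentially the same strategy as the paper: prove the key identity $\overline{\qbinom{x}{k}} = (-1)^k q^{\binom{k+1}{2}}S^{k-1}\qbinom{x}{k}$ by interpolation at $x=[n]_q$, deduce the expanded sum from Equation~(\ref{eq:qbinommultishift}), and obtain the second display from the first. The only cosmetic differences are that the paper computes $\overline{\qbinom{x}{k}}\rvert_{x:=[n]_q}$ by barring the product formula factor by factor (rather than via the commutative square and the evaluation $\qbinom{[-n]_q}{k}$ as you do), and derives the second display by applying $S^{-(k-1)}$ and then Equation~(\ref{eq:negqbinommultishift}) rather than by barring directly; both routes are equivalent through Proposition~\ref{prop:otherinvs}.
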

\begin{proof}
Let us first address how to express $\overline{ \qbinom{x}{k}}$ in the basis of $\qbinom{x}{i}$. The second equality follows from the first by an application of~(\ref{eq:qbinommultishift}). For the first equality, by Lemma~\ref{lem:interpolation} it suffices to verify
\[ \left.\overline{ \qbinom{x}{k}}\right\rvert_{x:=[n]_q} = (-1)^k q^{\binom{k+1}{2}}\left.S^{k-1}\qbinom{x}{k}\right\rvert_{x:=[n]_q}  \]
for all $n \in \mathbb{N}$. This follows from straightforward algebraic manipulation: we verify directly from the definitions of $\qbinom{x}{k}$, the bar involution, and the shift operator (as well as the fact that~$\overline{ [n]_q} = q^{-(n-1)}[n]_q$ for all $n\in\mathbb{Z}$) that
\begin{align*}
\left.\overline{ \qbinom{x}{k}}\right\rvert_{x:=[n]_q} &= \left.\overline{\prod_{i=0}^{k-1} \frac{x-[i]_q}{q^i [i+1]_q}}\right\rvert_{x:=[n]_q} \\
&=  \left.\prod_{i=0}^{k-1} \frac{-qx-q^{-(i-1)}[i]_q}{q^{-2i} [i+1]_q}\right\rvert_{x:=[n]_q} \\
&=\prod_{i=0}^{k-1} \frac{-q[n]_q-q^{-(i-1)}[i]_q}{q^{-2i} [i+1]_q} \\
&=(-1)^k \prod_{i=0}^{k-1} q^{i+1} \frac{[n+i]_q}{[i+1]_q} \\
&=(-1)^k q^{\binom{k+1}{2}} \prod_{i=0}^{k-1}  \frac{[n+k-1]_q-[i]_q}{q^{i}[i+1]_q} \\
&= (-1)^k q^{\binom{k+1}{2}}\left.S^{k-1}\qbinom{x}{k}\right\rvert_{x:=[n]_q},
\end{align*}
as desired.

Now let us address how to express $\qbinom{x}{k}$ in the basis of $\overline{ \qbinom{x}{i}}$. The first equality follows from the claim about how to express $\overline{ \qbinom{x}{k}}$ in the basis of $\qbinom{x}{i}$ by applying~$S^{-(k-1)}$ to both sides. Then by applying the bar involution to both sides of~(\ref{eq:qbinommultishift}), and using the fact proved in Proposition~\ref{prop:otherinvs} that $\overline{SP} = S^{-1}\overline{P}$, we get for $m,k \in \mathbb{N}$ that
\begin{equation} \label{eq:negqbinommultishift}
S^{-m}\overline{ \qbinom{x}{k}} = \sum_{i=0}^{m} q^{(i-m)k} \qbinom{m}{i}_q \qbinom{x}{k-i}.
\end{equation}
Equation~(\ref{eq:negqbinommultishift}) lets us deduce the second equality from the first.
\end{proof}

It is worth remarking, as mentioned in Section~\ref{sec:shift}, that the~$q:=1$ case of Proposition~\ref{prop:invconsts} says that
\[ \binom{-n}{k}=(-1)^k\multiset{n}{k} \]
for all $n,k \in \mathbb{N}$. This duality between ``$n$ choose $k$'' and ``$n$ multichoose $k$,'' an observation which has been attributed to Riordan~\cite{riordan1958introduction}, is the starting point for the study of combinatorial reciprocity theorems~\cite{stanley1974reciprocity}. We also note that it is possible to give a combinatorial interpretation for~$\overline{\qbinom{n}{k}}$ with~$n,k \in \mathbb{N}$ as a generating function for certain lattice paths by area under the path, generalizing the multiset interpretation of $(-1)^k\binom{-n}{k}$.

\end{section}

\begin{section}{Lucas' theorem and a quantum Frobenius map} \label{sec:frobenius}

We'd now like to define a quantum Frobenius map on (a base change of)~$\mathcal{R}_q$.  To highlight the analogy we will first review the usual Frobenius map on~$\mathcal{R}\otimes_{\mathbb{Z}} \mathbb{F}_p$.  We recall the celebrated Lucas' theorem on binomial coefficients.

\begin{theorem}[Lucas 1878~\cite{lucas1878theorie}]\label{thm:lucas}
Let $p$ be a prime. Let $n,m \in \mathbb{N}$. Suppose that~$n = n_0 + n_1p+ \dots + n_kp^k$ and~$m = m_0 + m_1p+\dots + m_kp^k$ are the base $p$ expansions for $n$ and $m$ (so $n_i,m_i \in [0,p-1]$ for all $1 \leq i \leq k$). Then,
\[\binom{n}{m} \equiv \binom{n_0}{m_0} \binom{n_1}{m_1} \binom{n_2} {m_2} \dots \binom{n_k}{m_k}  \mod p.\]
\end{theorem}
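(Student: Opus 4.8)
The plan is to prove Lucas' theorem by a generating-function computation in the polynomial ring $\mathbb{F}_p[X]$. The engine of the whole argument is the Frobenius-type congruence $(1+X)^p \equiv 1 + X^p \pmod{p}$, which holds because every intermediate coefficient $\binom{p}{j}$ for $1 \leq j \leq p-1$ is divisible by $p$ (the numerator $p!$ is divisible by $p$ while the denominator $j!(p-j)!$ is not). Iterating this congruence gives $(1+X)^{p^i} \equiv 1 + X^{p^i} \pmod{p}$ for every $i \geq 0$.

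Next I would use the base-$p$ expansion $n = n_0 + n_1 p + \cdots + n_k p^k$ to factor
\[ (1+X)^n = \prod_{i=0}^{k} \left((1+X)^{p^i}\right)^{n_i} \equiv \prod_{i=0}^{k} \left(1 + X^{p^i}\right)^{n_i} \pmod{p}. \]
The coefficient of $X^m$ on the left-hand side is $\binom{n}{m} \bmod p$. On the right-hand side I expand each factor as $(1 + X^{p^i})^{n_i} = \sum_{j_i = 0}^{n_i} \binom{n_i}{j_i} X^{j_i p^i}$, so that the coefficient of $X^m$ in the product is $\sum \prod_{i=0}^{k} \binom{n_i}{j_i}$, the sum ranging over all tuples $(j_0, \ldots, j_k)$ with $0 \leq j_i \leq n_i$ for each $i$ and $\sum_{i=0}^{k} j_i p^i = m$.

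The key point is then the uniqueness of base-$p$ expansions: because $n_i \leq p-1$, any admissible tuple has $j_i \leq p-1$ as well, so the equation $\sum_i j_i p^i = m$ forces $(j_0, \ldots, j_k)$ to be the base-$p$ digits of $m$, i.e.\ $j_i = m_i$. Hence there is at most one admissible tuple. If $m_i \leq n_i$ for all $i$, that tuple occurs and contributes $\prod_i \binom{m_i}{m_i}\cdots$; more precisely the coefficient is $\prod_i \binom{n_i}{m_i}$. If instead $m_i > n_i$ for some $i$, then no admissible tuple exists, so the coefficient is $0$, which matches $\prod_i \binom{n_i}{m_i} = 0$ since that factor vanishes. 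Comparing the two expressions for the coefficient of $X^m$ yields $\binom{n}{m} \equiv \prod_{i=0}^k \binom{n_i}{m_i} \pmod{p}$.

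The main obstacle is not conceptual but bookkeeping: one must make sure the ranges of the $j_i$ match up correctly and that the degenerate case $m_i > n_i$ (where both sides are $0 \bmod p$) is handled, including the case where $m$ itself needs more than $k+1$ base-$p$ digits. Essentially all the real content is in the single congruence $(1+X)^p \equiv 1+X^p$; everything else is extracting coefficients and invoking uniqueness of base-$p$ representations.
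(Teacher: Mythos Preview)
Your argument is correct and is in fact the standard generating-function proof of Lucas' theorem. The paper does not supply its own proof of this statement: it is recorded as a classical result with a citation to Lucas~(1878), so there is nothing to compare against. One minor remark: the case you flag at the end, where $m$ would require more than $k+1$ base-$p$ digits, does not arise under the hypotheses as stated, since both $n$ and $m$ are assumed to have expansions $\sum_{i=0}^k a_i p^i$ with $a_i\in[0,p-1]$ (one simply takes $k$ large enough for both and pads with zeros).
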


We have the following corollary of Lucas' theorem, proving the existence of a Frobenius map for $\mathcal{R} \otimes_{\mathbb{Z}} \mathbb{F}_p$.

\begin{corollary} Let $p$ be a prime.
\begin{enumerate}
\item The map $\Psi_p: \mathcal{R} \otimes_{\mathbb{Z}} \mathbb{F}_p\to \mathcal{R} \otimes_{\mathbb{Z}} \mathbb{F}_p$ defined by $\Psi_p\colon\binom{x}{k} \mapsto \binom{x}{pk}$ and extended $\mathbb{F}_p$-linearly is a ring homomorphism.

\item $\Psi_p$ admits a one-sided inverse $\widetilde{\Psi}_p: \mathcal{R} \otimes_{\mathbb{Z}} \mathbb{F}_p\to \mathcal{R} \otimes_{\mathbb{Z}} \mathbb{F}_p$ defined by
\[ \widetilde{\Psi}_p\left(\binom{x}{k}\right) := \begin{cases} 
      \binom{x}{k/p} & \textrm{ if $p\mid k$}; \\
      0 & \textrm{ otherwise}. \\
   \end{cases} \]
\end{enumerate}
\end{corollary}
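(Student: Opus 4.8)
The plan is to verify both statements by evaluating polynomials at natural numbers. The basic tool is that the evaluation map $\mathcal{R}\otimes_{\mathbb{Z}}\mathbb{F}_p \to \mathrm{Map}(\mathbb{N},\mathbb{F}_p)$ sending $P$ to the function $n \mapsto P(n)\bmod p$ is injective: if $\sum_k a_k\binom{x}{k}$ with $a_k \in \mathbb{F}_p$ vanishes at every $n\in\mathbb{N}$, then evaluating successively at $n=0,1,2,\ldots$ and using $\binom{n}{n}=1$ and $\binom{n}{k}=0$ for $k>n$ forces $a_n=0$ for all $n$. (Lemma~\ref{lem:interpolation} does not apply verbatim here, since a nonzero polynomial over $\mathbb{F}_p$ can vanish on all of $\mathbb{F}_p$; but this triangularity argument is all that is needed.) So to prove $\Psi_p$ is a ring homomorphism and that $\widetilde{\Psi}_p\circ\Psi_p=\mathrm{id}$, it suffices to check the corresponding identities of $\mathbb{F}_p$-valued functions on $\mathbb{N}$.

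The key input is two consequences of Lucas' theorem (Theorem~\ref{thm:lucas}). Writing $n=\sum_i n_i p^i$ in base $p$, the base-$p$ digits of $pk$ are those of $k$ shifted up one place (with a $0$ inserted in the units place), while those of $\lfloor n/p\rfloor$ are those of $n$ shifted down one place; comparing the Lucas products then gives, for all $n,k\in\mathbb{N}$,
\[\binom{n}{pk} \equiv \binom{\lfloor n/p\rfloor}{k}\pmod p,\qquad \binom{pn}{k}\equiv \begin{cases}\binom{n}{k/p} & \text{if }p\mid k,\\ 0 & \text{if }p\nmid k,\end{cases}\pmod p.\]

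Given the first congruence, for $P=\sum_k a_k\binom{x}{k}\in\mathcal{R}\otimes_{\mathbb{Z}}\mathbb{F}_p$ we get $\Psi_p(P)(n)=\sum_k a_k\binom{n}{pk}=\sum_k a_k\binom{\lfloor n/p\rfloor}{k}=P(\lfloor n/p\rfloor)$ in $\mathbb{F}_p$, for every $n\in\mathbb{N}$. Thus, under the evaluation embedding, $\Psi_p$ is precomposition with the set map $n\mapsto\lfloor n/p\rfloor$; since evaluation at each fixed $n$ is a ring homomorphism $\mathcal{R}\otimes_{\mathbb{Z}}\mathbb{F}_p\to\mathbb{F}_p$ and precomposition with a map of sets is a ring endomorphism of $\mathrm{Map}(\mathbb{N},\mathbb{F}_p)$, injectivity of evaluation forces $\Psi_p$ to be multiplicative; it is unital since $\Psi_p(1)=\binom{x}{0}=1$. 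Likewise the second congruence gives $\widetilde{\Psi}_p(P)(n)=P(pn)$, so $\widetilde{\Psi}_p$ is precomposition with $n\mapsto pn$ and is also a ring homomorphism. Finally $\widetilde{\Psi}_p\circ\Psi_p$ is precomposition with $n\mapsto\lfloor pn/p\rfloor=n$, i.e.\ the identity — or directly on the basis, $\widetilde{\Psi}_p(\Psi_p(\binom{x}{k}))=\widetilde{\Psi}_p(\binom{x}{pk})=\binom{x}{k}$ since $p\mid pk$ — whereas $\Psi_p\circ\widetilde{\Psi}_p$ kills $\binom{x}{k}$ for $p\nmid k$, so the inverse is genuinely one-sided.

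I do not expect any real obstacle: the whole statement is an unwinding of Lucas' theorem. The only points needing care are (i) noting that one may verify identities in $\mathcal{R}\otimes_{\mathbb{Z}}\mathbb{F}_p$ by checking values on $\mathbb{N}$, which requires the short triangularity argument rather than a direct appeal to Lemma~\ref{lem:interpolation}, and (ii) getting the base-$p$ digit bookkeeping right in the two displayed congruences — in particular keeping the roles of $n\mapsto pn$ and $n\mapsto\lfloor n/p\rfloor$ straight.
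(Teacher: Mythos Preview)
Your proof is correct and takes a genuinely different route from the paper's. The paper argues directly with the structure constants: it expands $\binom{x}{pi}\binom{x}{pj}$ using the explicit multiplication formula of Theorem~\ref{thm:binommult}, applies Lucas' theorem to kill the terms with $p\nmid k$ and to reduce the surviving coefficients $\binom{pk'}{pi}\binom{pi}{p(i+j-k')}$ to $\binom{k'}{i}\binom{i}{i+j-k'}$, and then recognizes the result as $\Psi_p\bigl(\binom{x}{i}\binom{x}{j}\bigr)$; for $\widetilde{\Psi}_p$ it separately checks that the span of the $\binom{x}{i}$ with $p\nmid i$ is an ideal. Your approach bypasses the structure constants entirely: after observing (via triangularity) that the evaluation map $\mathcal{R}\otimes_{\mathbb{Z}}\mathbb{F}_p\hookrightarrow\mathrm{Map}(\mathbb{N},\mathbb{F}_p)$ is an injective ring map, you identify $\Psi_p$ and $\widetilde{\Psi}_p$ with precomposition by $n\mapsto\lfloor n/p\rfloor$ and $n\mapsto pn$ respectively, from which multiplicativity and $\widetilde{\Psi}_p\circ\Psi_p=\mathrm{id}$ are immediate. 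This is cleaner and more conceptual, and it yields for free that $\widetilde{\Psi}_p$ is itself a ring homomorphism (something the paper obtains only after the ideal check). The paper's approach, on the other hand, has the virtue of being a template that carries over verbatim to the quantum Frobenius $\Psi_d$ in the next corollary, where no analogous ``evaluation at $\lfloor n/d\rfloor$'' interpretation is available over $\mathbb{Z}[q,q^{-1}]/\Phi_d(q)$.
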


\begin{proof} 
All we need to do is check these formulas are compatible with the multiplication formula in Theorem~\ref{thm:binommult}.  So let us expand~$\Psi_p(\binom{x}{i})\Psi_p(\binom{x}{j})$ in $\mathcal{R}\otimes_{\mathbb{Z}}\mathbb{F}_p$:
\[\Psi_p\left(\binom{x}{i}\right)\Psi_p\left(\binom{x}{j}\right) = \binom{x}{pi}\binom{x}{pj} = \sum_{k = \mathrm{max}(pi,pj)}^{pi+pj}\binom{k}{pi}\binom{pi}{pj+pi-k}\binom{x}{k}.\]
Now Lucas' theorem tells us that $\binom{pi}{pj+pi-k} \equiv 0 \mod p$ unless $p\mid k$.  So throwing out those terms that vanish we may rewrite this with $k = pk'$ as
\[\Psi_p\left(\binom{x}{i}\right)\Psi_p\left(\binom{x}{j}\right) = \sum_{k' = \mathrm{max}(i,j)}^{i+j}\binom{pk'}{pi}\binom{pi}{pj+pi-pk'}\binom{x}{pk'}.\]
Using the definition of $\Psi_p$ and simplifying with Lucas' theorem one more time we get
\[\Psi_p\left(\binom{x}{i}\right)\Psi_p\left(\binom{x}{j}\right) = \sum_{k' = \mathrm{max}(i,j)}^{i+j}\binom{k'}{i}\binom{i}{j+i-k'}\Psi_p\left(\binom{x}{k'}\right),\]
which is exactly $\Psi_p\left(\binom{x}{j}\binom{x}{k}\right)$ according to Theorem~\ref{thm:binommult}. Hence $\Psi_p$ is a ring homomorphism, as desired.

For the second part, above calculation shows that $\widetilde{\Psi}_p$ is multiplicative for those basis vectors it does not send to zero. So all that remains is to check that the span of the polynomials $\binom{x}{i}$ with $p \nmid i$ is an ideal of $\mathcal{R}\otimes \mathbb{F}_p$. This again can be seen directly from Theorem~\ref{thm:binommult} and Lucas' theorem. Let $i \in \mathbb{N}$ satisfy~$p \nmid i$ and $j \in \mathbb{N}$ be arbitrary. We have that
\[\binom{x}{i}\binom{x}{j} = \sum_{k=\mathrm{max}(i,j)}^{i+j}\binom{k}{i}\binom{i}{i+j-k}\binom{x}{k}.\]
Lucas' theorem tells us that if $p \mid k$ then $\binom{k}{i} \equiv 0 \mod p$, hence we may take the sum to just be over those $k$ such that $p \nmid k$, as desired.
\end{proof}

The Frobenius map $x \mapsto x^p$ defined on $\mathbb{F}_p[x]$ commutes with the shift operator $x \mapsto x+1$ and with the ``bar involution'' $x \mapsto -x$. However, note crucially that $\mathcal{R} \otimes_{\mathbb{Z}} \mathbb{F}_p$ does not naturally sit inside $\mathbb{F}_p[x]$ and that moreover~$\Psi_p\colon \mathcal{R} \otimes_{\mathbb{Z}} \mathbb{F}_p \to \mathcal{R} \otimes_{\mathbb{Z}} \mathbb{F}_p$ does not extend $x \mapsto x^p$. Indeed, the Frobenius map~$\Psi_p\colon \mathcal{R} \otimes_{\mathbb{Z}} \mathbb{F}_p \to \mathcal{R} \otimes_{\mathbb{Z}} \mathbb{F}_p$ does not appear to have a simple relationship to the shift operator $S\colon \mathcal{R} \otimes_{\mathbb{Z}} \mathbb{F}_p \to \mathcal{R} \otimes_{\mathbb{Z}} \mathbb{F}_p$ or the bar involution~$\overline{\raisebox{0.5em}{\;\;}}\colon \mathcal{R} \otimes_{\mathbb{Z}} \mathbb{F}_p \to \mathcal{R} \otimes_{\mathbb{Z}} \mathbb{F}_p$. (These maps are obtained from the ones defined on~$\mathcal{R}_q$ by specializing $q:=1$ and then tensoring with~$\mathbb{F}_p$.) However, we do have the following proposition which says that $\Psi_p$ commutes with one of the related involutions from Proposition~\ref{prop:otherinvs}.

\begin{proposition} \label{prop:frobcommute}
For all $P(x) \in \mathcal{R} \otimes_{\mathbb{Z}} \mathbb{F}_p$, we have $\Psi_p(S\overline{P(x)}) = S\overline{\Psi_p(P(x))}$.
\end{proposition}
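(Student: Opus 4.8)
Since $\Psi_p$, $S$, and the bar involution are all $\mathbb{F}_p$-linear, it suffices to check the asserted identity on the basis $\{\binom{x}{k}\colon k \in \mathbb{N}\}$ of $\mathcal{R} \otimes_{\mathbb{Z}} \mathbb{F}_p$. The first step is to record a closed form for $S\overline{\binom{x}{k}}$, which we do by working in $\mathcal{R}$ before tensoring with $\mathbb{F}_p$. At $q := 1$ the bar involution is the substitution $x \mapsto -x$, so the composite $P(x) \mapsto S\overline{P(x)}$ is the substitution $x \mapsto -x-1$ (this is the $q:=1$, $m:=1$ case of the involution in Proposition~\ref{prop:otherinvs}). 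Hence
\[ S\overline{\binom{x}{k}} \;=\; \binom{-x-1}{k} \;=\; (-1)^k\binom{x+k}{k} \;=\; (-1)^k\sum_{j=0}^{k}\binom{k}{j}\binom{x}{j}, \]
the last equality being the Chu--Vandermonde identity. (Equivalently, this closed form follows from the $q:=1$ specialization of Proposition~\ref{prop:invconsts} together with~(\ref{eq:binommultishift}).) All coefficients appearing are integers, so this identity descends to $\mathcal{R}\otimes_{\mathbb{Z}}\mathbb{F}_p$.

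Applying $\Psi_p$ and using $\Psi_p(\binom{x}{j}) = \binom{x}{pj}$ gives, in $\mathcal{R}\otimes_{\mathbb{Z}}\mathbb{F}_p$,
\[ \Psi_p\!\left(S\overline{\binom{x}{k}}\right) \;=\; (-1)^k\sum_{j=0}^{k}\binom{k}{j}\binom{x}{pj}. \]
For the other side, $\Psi_p(\binom{x}{k}) = \binom{x}{pk}$, and the same closed form applied to $\binom{x}{pk}$ yields
\[ S\overline{\Psi_p\!\left(\binom{x}{k}\right)} \;=\; S\overline{\binom{x}{pk}} \;=\; (-1)^{pk}\sum_{j=0}^{pk}\binom{pk}{j}\binom{x}{j}. \]
The key step is now to simplify this last sum modulo $p$ using Lucas' theorem (Theorem~\ref{thm:lucas}): the base-$p$ digits of $pk$ are those of $k$ shifted up one place, with units digit $0$, so $\binom{pk}{j} \equiv 0 \pmod p$ unless $p \mid j$, and for $j = pj'$ one gets $\binom{pk}{pj'} \equiv \binom{k}{j'} \pmod p$. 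Since also $(-1)^{pk} = (-1)^k$ in $\mathbb{F}_p$ (clear for $p$ odd, and both are $1$ when $p=2$), the right-hand side collapses to $(-1)^k\sum_{j'=0}^{k}\binom{k}{j'}\binom{x}{pj'}$, which is exactly $\Psi_p(S\overline{\binom{x}{k}})$. This proves the proposition.

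The argument is essentially bookkeeping once the closed form for $S\overline{(\cdot)}$ is in hand; the only point requiring a little care is the invocation of Lucas' theorem, which is why it is convenient to write $S\overline{\binom{x}{pk}}$ as a sum of ordinary binomial coefficients $\binom{pk}{j}$ rather than leaving it in a form like $S^{pk}\binom{x}{pk}$. I would also emphasize in the writeup that the reduction modulo $p$ must be performed at the level of coefficients in the $\binom{x}{k}$-basis: although $S\overline{(\cdot)}$ on the honest ring $\mathcal{R}$ is literally polynomial evaluation $x\mapsto -x-1$ in $\mathbb{Q}[x]$, the ring $\mathcal{R}\otimes_{\mathbb{Z}}\mathbb{F}_p$ does not embed in $\mathbb{F}_p[x]$ and $\Psi_p$ is not an operation on $\mathbb{F}_p[x]$, so there is no shortcut via polynomials over $\mathbb{F}_p$.
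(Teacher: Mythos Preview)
Your proof is correct and follows essentially the same approach as the paper's: reduce to the basis $\binom{x}{k}$, write $S\overline{\binom{x}{pk}}$ as an explicit $\binom{pk}{j}$-weighted sum of basis elements, and then collapse that sum modulo $p$ using Lucas' theorem and $(-1)^{pk}\equiv(-1)^k$. The only cosmetic difference is that you derive the closed form for $S\overline{\binom{x}{k}}$ via $\binom{-x-1}{k}=(-1)^k\binom{x+k}{k}$ and Chu--Vandermonde, whereas the paper quotes the $q:=1$ case of Proposition~\ref{prop:invconsts} together with~(\ref{eq:binommultishift}); as you yourself note, these give the same formula.
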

\begin{proof}
It suffices to verify that $\Psi_p(S\overline{ \binom{x}{k}}) = S\overline{ \Psi_p(\binom{x}{k})}$ for all $k \in \mathbb{N}$ because the~$\binom{x}{k}$ are a $\mathbb{Z}$-basis of $\mathcal{R}$ and both expressions are clearly $\mathbb{Z}$-linear. To that end, using Proposition~\ref{prop:invconsts} and Equation~(\ref{eq:binommultishift}) we compute that
\begin{align*}
S\overline{ \Psi_p\left(\binom{x}{k}\right)} &= S\overline{ \binom{x}{pk}} \\
&= (-1)^{pk} S^k\binom{x}{pk} \\
&= (-1)^{pk} \sum_{i=0}^{pk} \binom{pk}{i} \binom{x}{pk-i}
\end{align*}
We can simplify this expressing, noting first of all that $(-1)^{pk} \equiv (-1)^{k} \mod p$, and also, thanks to Lucas' theorem, that $\binom{pk}{i} \equiv 0 \mod p$ if $p \nmid i$. Ignoring the terms that vanish and writing $k =pk'$ we have
\[S\overline{ \Psi_p\left(\binom{x}{k}\right)} = (-1)^k \sum_{i'=0}^{k} \binom{pk}{pi'} \binom{x}{pk-pi'}.\]
Again applying Lucas' theorem, we get
\[S\overline{ \Psi_p\left(\binom{x}{k}\right)} = (-1)^k \sum_{i'=0}^{k} \binom{k}{i'} \binom{x}{p(k-i')},\]
which is exactly $\Psi_p(S\overline{ \binom{x}{k}})$ according to Proposition~\ref{prop:invconsts} and Equation~(\ref{eq:binommultishift}).
\end{proof}

Now let us try to extend the above to $\mathcal{R}_q$. A naive thing to try would be to lift this to a map from $\mathcal{R}_q \otimes \mathbb{F}_p$ with $\qbinom{x}{k} \mapsto \qbinom{x}{pk}$.  However looking at the degree in $q$ of the multiplicative constants it is clear that such a map cannot be a ring homomorphism.

Instead, the connection to representation theory suggest that there should be certain similarities between working in positive characteristic at $q:=1$, and specializing $q$ to a root of unity. Indeed, we can generalize the above argument to define a quantum Frobenius map on certain quotients of $\mathcal{R}_q$, but first we will need the following $q$-analog of Lucas' theorem due (we believe) to Sved henceforth referred to as the $q$-Lucas' theorem.

\begin{theorem}[Sved 1988~\cite{sved1988divisibility}]\label{thm:qlucas}
Let $d$ be a positive integer and $n,m \in \mathbb{N}$. Suppose that~$n = dn'+n_0$ and~$m = dm'+m_0$ with~$n_0,m_0 \in [0,d-1]$.  Then
\[\qbinom{n}{m}_q \equiv \binom{n'}{m'}\qbinom{n_0}{m_0}_q \mod \Phi_d(q),\]
where $\Phi_d$ denotes the $d$th cyclotomic polynomial.
\end{theorem}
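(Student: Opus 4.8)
The plan is to reduce the congruence modulo $\Phi_d(q)$ to a numerical identity by evaluating at a primitive $d$th root of unity $\zeta \in \mathbb{C}$: since $\Phi_d$ is the minimal polynomial of $\zeta$ over $\mathbb{Q}$, a polynomial in $\mathbb{Z}[q]$ is divisible by $\Phi_d(q)$ if and only if it vanishes at $q = \zeta$, so it suffices to prove the identity $\qbinom{n}{m}_\zeta = \binom{n'}{m'}\qbinom{n_0}{m_0}_\zeta$ in $\mathbb{Z}[\zeta]$. The engine for this will be the finite $q$-binomial theorem
\[\prod_{i=0}^{N-1}(1 - q^i x) = \sum_{k=0}^{N} (-1)^k q^{\binom{k}{2}} \qbinom{N}{k}_q x^k,\]
valid in $\mathbb{Z}[q][x]$ for every $N \in \mathbb{N}$, which follows by a routine induction on $N$ using the $q$-Pascal identity~(\ref{eq:qpascal}).

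The key simplification upon specializing $q := \zeta$ is the factorization $\prod_{i=0}^{d-1}(1 - \zeta^i x) = 1 - x^d$: the left-hand side is a polynomial in $x$ of degree $d$ whose roots are exactly the $d$th roots of unity, each with multiplicity one, hence a scalar multiple of $x^d - 1$, and evaluating at $x = 0$ pins the scalar. Because $\zeta^i$ depends only on $i \bmod d$, writing $n = dn' + n_0$ gives
\[\prod_{i=0}^{n-1}(1 - \zeta^i x) = (1 - x^d)^{n'} \, \prod_{i=0}^{n_0 - 1}(1 - \zeta^i x).\]
I would then expand both sides in powers of $x$ and compare the coefficient of $x^m$, where $m = dm' + m_0$. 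By the $q$-binomial theorem the left-hand side contributes $(-1)^m \zeta^{\binom{m}{2}} \qbinom{n}{m}_\zeta$. On the right-hand side, expanding $(1 - x^d)^{n'}$ by the ordinary binomial theorem and the remaining factor by the $q$-binomial theorem, a product $x^{da} \cdot x^{b}$ hits $x^m$ only when $da + b = dm' + m_0$; since $0 \le b \le n_0 \le d-1$ and $0 \le m_0 \le d-1$ this forces $b = m_0$ and $a = m'$ (and when $m_0 > n_0$ both sides vanish, handling the degenerate case), so the coefficient is $\binom{n'}{m'}(-1)^{m'} \cdot (-1)^{m_0}\zeta^{\binom{m_0}{2}}\qbinom{n_0}{m_0}_\zeta$.

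Equating the two coefficients, the theorem follows once the common factors are cancelled, and this is the one step where I expect friction: one must verify that the leftover scalar $(-1)^{m - m' - m_0}\,\zeta^{\binom{m}{2} - \binom{m_0}{2}}$ equals $1$. Using $m - m_0 = dm'$ one computes $m - m' - m_0 = (d-1)m'$ and $\binom{m}{2} - \binom{m_0}{2} = \tfrac{1}{2}\,dm'(m + m_0 - 1)$; a short parity analysis — splitting on whether $d$ is even or odd, and using $\zeta^d = 1$ together with $\zeta^{d/2} = -1$ when $d$ is even — shows that the sign defect and the root-of-unity defect are both nontrivial precisely when $d$ is even and $m'$ is odd, so they cancel and the product is always $1$. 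This bookkeeping is the main obstacle; the rest of the argument is formal.
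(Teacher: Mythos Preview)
Your argument is correct. The reduction to evaluation at a primitive $d$th root of unity is justified by Gauss's lemma (since $\Phi_d$ is monic), the $q$-binomial theorem and the factorization $\prod_{i=0}^{d-1}(1-\zeta^i x)=1-x^d$ are both standard, and your sign and root-of-unity bookkeeping checks out: when $d$ is odd, $(d-1)m'$ is even and $\binom{m}{2}-\binom{m_0}{2}=\tfrac{d}{2}m'(m+m_0-1)$ is a multiple of $d$ because $m'(m+m_0-1)$ is even; when $d$ is even, $m+m_0-1=dm'+2m_0-1$ is odd, so $\zeta^{\binom{m}{2}-\binom{m_0}{2}}=(-1)^{m'(m+m_0-1)}=(-1)^{m'}$ exactly cancels $(-1)^{(d-1)m'}=(-1)^{m'}$.

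As for comparison: the paper does not prove this theorem at all. It is stated with attribution to Sved and then used as a black box in the construction of the quantum Frobenius map and in the classification of maps to fields. Your proof is the standard generating-function argument for the $q$-Lucas theorem and would serve perfectly well as a self-contained supplement.
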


Comparing this to the usual Lucas' theorem, this suggests that we should look for quantum Frobenius maps not from $\mathcal{R}_q / \Phi_d(q)$ to itself, but between~$\mathcal{R}$ and~$\mathcal{R}_q / \Phi_d(q)$. Specifically, we have the following corollary to the $q$-Lucas' theorem.

\begin{corollary} Let $d$ be a positive integer.
\begin{enumerate}
\item The map $\Psi_d: \mathcal{R} \to \mathcal{R}_q / \Phi_d(q)$ defined by $\binom{x}{k} \mapsto \qbinom{x}{dk}$ and extended $\mathbb{Z}$-linearly is a ring homomorphism.

\item After extending scalars, $\Psi_d$ admits a one sided inverse $\widetilde{\Psi}_d: \mathcal{R}_q / \Phi_d(q) \to \mathcal{R} \otimes_{\mathbb{Z}} \mathbb{Z}[q,q^{-1}]/{\Phi_d(q)}$ defined by

\[ \widetilde{\Psi}_d\left(\qbinom{x}{k}\right) := \begin{cases} 
      \binom{x}{k/d} & \textrm{ if $d\mid k$}; \\
      0 & \textrm{ otherwise}. \\
   \end{cases} \]

\end{enumerate}
\end{corollary}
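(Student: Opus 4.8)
The plan is to carry over the proof of the classical Frobenius corollary essentially verbatim, substituting the $q$-Lucas theorem (Theorem~\ref{thm:qlucas}) for ordinary Lucas' theorem and the quantum structure constants of Theorem~\ref{thm:qbinommult} for the classical structure constants of Theorem~\ref{thm:binommult}. The only genuinely new input is that $\Phi_d(q)$ divides $q^d-1$, so $q^d\equiv 1\mod\Phi_d(q)$; this is what causes the extra powers of $q$ in the quantum structure constants to disappear upon reduction. Throughout I use that, since $\{\qbinom{x}{k}\}_{k\in\mathbb{N}}$ is a $\mathbb{Z}[q,q^{-1}]$-basis of $\mathcal{R}_q$ (Propositions~\ref{prop:qbinom} and~\ref{prop:localization}), an identity in $\mathcal{R}_q/\Phi_d(q)$ can be verified by comparing the coefficient of each $\qbinom{x}{k}$ modulo $\Phi_d(q)$.

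For part~(1), I would expand $\Psi_d\left(\binom{x}{i}\right)\Psi_d\left(\binom{x}{j}\right)=\qbinom{x}{di}\qbinom{x}{dj}$ with Theorem~\ref{thm:qbinommult}, and then factor the structure constant of the $\qbinom{x}{k}$ term as
\[\frac{q^{(k-di)(k-dj)}\,[k]_q!}{[k-di]_q!\,[k-dj]_q!\,[di+dj-k]_q!}\;=\;q^{(k-di)(k-dj)}\,\qbinom{k}{di}_q\,\qbinom{di}{di+dj-k}_q,\]
so that the second $q$-binomial is $\qbinom{di}{m}_q$ with $m:=di+dj-k\in[0,di]$. If $d\nmid k$ then $d\nmid m$ (because $m\equiv -k\mod d$), so, since $d\mid di$, the $q$-Lucas theorem gives $\qbinom{di}{m}_q\equiv\binom{i}{m'}\qbinom{0}{m_0}_q=0\mod\Phi_d(q)$ (writing $m=dm'+m_0$ with $m_0\in[1,d-1]$); hence only the terms with $d\mid k$ survive modulo $\Phi_d(q)$. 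In each surviving term write $k=dk'$; applying $q$-Lucas twice more gives $\qbinom{dk'}{di}_q\equiv\binom{k'}{i}$ and $\qbinom{di}{d(i+j-k')}_q\equiv\binom{i}{i+j-k'}$, while $q^{(dk'-di)(dk'-dj)}=q^{d^2(k'-i)(k'-j)}\equiv 1\mod\Phi_d(q)$. So the $\qbinom{x}{dk'}$-coefficient of $\qbinom{x}{di}\qbinom{x}{dj}$ is congruent to $\binom{k'}{i}\binom{i}{i+j-k'}$ modulo $\Phi_d(q)$, which by Theorem~\ref{thm:binommult} (and the identity $\tfrac{k'!}{(k'-i)!(k'-j)!(i+j-k')!}=\binom{k'}{i}\binom{i}{i+j-k'}$) is precisely the $\qbinom{x}{dk'}$-coefficient of $\Psi_d\left(\binom{x}{i}\binom{x}{j}\right)$. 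Thus $\Psi_d\left(\binom{x}{i}\right)\Psi_d\left(\binom{x}{j}\right)\equiv\Psi_d\left(\binom{x}{i}\binom{x}{j}\right)\mod\Phi_d(q)$, so $\Psi_d$ is a ring homomorphism.

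For part~(2), the computation in part~(1) already shows $\widetilde{\Psi}_d$ is multiplicative on the basis elements $\qbinom{x}{dk}$ it does not send to $0$, so (exactly as in the classical proof) it remains only to check that the $\mathbb{Z}[q,q^{-1}]/\Phi_d(q)$-span of $\{\qbinom{x}{k}\colon d\nmid k\}$ is an ideal of $\mathcal{R}_q/\Phi_d(q)$. For that, fix $a$ with $d\nmid a$ and $b\in\mathbb{N}$ arbitrary, expand $\qbinom{x}{a}\qbinom{x}{b}$ via Theorem~\ref{thm:qbinommult}, and this time factor the structure constant of the $\qbinom{x}{k}$ term so that $\qbinom{k}{a}_q$ is one of the factors. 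Since $a\le k$ in every term of the sum, if $d\mid k$ then $q$-Lucas gives $\qbinom{k}{a}_q\equiv\binom{k/d}{a'}\qbinom{0}{a_0}_q=0\mod\Phi_d(q)$ (writing $a=da'+a_0$ with $a_0\in[1,d-1]$), so the $\qbinom{x}{k}$-coefficient of $\qbinom{x}{a}\qbinom{x}{b}$ vanishes modulo $\Phi_d(q)$ whenever $d\mid k$; this is exactly the ideal claim. Hence $\widetilde{\Psi}_d$ is a ring homomorphism, and $\widetilde{\Psi}_d\circ\Psi_d$ sends $\binom{x}{k}\mapsto\qbinom{x}{dk}\mapsto\binom{x}{k}$, so it is the natural inclusion $\mathcal{R}\hookrightarrow\mathcal{R}\otimes_\mathbb{Z}\mathbb{Z}[q,q^{-1}]/\Phi_d(q)$, which is what ``one-sided inverse after extending scalars'' means.

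I expect the main obstacle to be combinatorial bookkeeping rather than anything deep: one has to pick the factorization of the quantum trinomial coefficient into two $q$-binomials so that exactly one factor has numerator divisible by $d$ and lower entry not divisible by $d$ --- it is this single ``units-digit mismatch modulo $d$'' that forces the vanishing via $q$-Lucas. The more symmetric factorization $\qbinom{k}{di+dj-k}_q\,\qbinom{2k-di-dj}{k-di}_q$ appearing in the proof of Theorem~\ref{thm:qbinommult} does \emph{not} make this vanishing visible, so a slightly different factorization is needed here. The only other point requiring care is that the surviving ($d\mid k$) coefficient really does collapse to the classical structure constant: both $q$-binomial factors must be reduced by $q$-Lucas, and the prefactor $q^{d^2(k'-i)(k'-j)}$ must be recognized as $\equiv 1\mod\Phi_d(q)$, for which one uses $\Phi_d(q)\mid q^d-1$.
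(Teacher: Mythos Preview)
Your proposal is correct and follows essentially the same route as the paper's proof: the same factorization $q^{(k-di)(k-dj)}\qbinom{k}{di}_q\qbinom{di}{di+dj-k}_q$ for part~(1), the same appeal to $q$-Lucas plus $q^d\equiv 1\mod\Phi_d(q)$ to collapse to the classical constants, and the same ideal argument via the factor $\qbinom{k}{a}_q$ for part~(2). Your extra remarks about why the symmetric factorization from Theorem~\ref{thm:qbinommult} is unsuitable here are accurate but not in the paper.
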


\begin{proof}
As before for the first part it suffices to check compatibility with the multiplication formulas from Theorems~\ref{thm:binommult} and~\ref{thm:qbinommult}. So we compute
\[\Psi_d\left(\binom{x}{i}\right)\Psi_d\left(\binom{x}{j}\right) =\qbinom{x}{di}\qbinom{x}{dj} = \hspace{-0.5cm} \sum_{k = \mathrm{max}(di,dj)}^{di+dj} \hspace{-0.5cm} q^{(k-di)(k-dj)} \qbinom{k}{di}_q \qbinom{di}{di+dj-k}_q \qbinom{x}{k} .\]
By the $q$-Lucas' theorem, the term $\qbinom{di}{di+dj-k}_q$ vanishes modulo~$\Phi_d(q)$ unless $d$ divides~$k$. Ignoring the terms that vanish and writing $k = dk'$ we have
\[\Psi_d\left(\binom{x}{i}\right)\Psi_d\left(\binom{x}{j}\right) = \hspace{-0.5cm}  \sum_{k' = \mathrm{max}(i,j)}^{i+j} \hspace{-0.5cm} q^{(dk'-di)(dk'-dj)} \qbinom{dk'}{di}_q \qbinom{di}{di+dj-dk'}_q \qbinom{x}{dk'}  .\]
We can simplify further the above expression using the $q$-Lucas' theorem and the fact that $q^d=1$ modulo $\Phi_d(q)$ to get
\[\Psi_d\left(\binom{x}{i}\right)\Psi_d\left(\binom{x}{j}\right) = \sum_{k' = \mathrm{max}(i,j)}^{i+j}  \binom{k'}{i} \binom{i}{i+j-k'} \qbinom{x}{dk'},\]
which is exactly $\Psi_d(\binom{x}{i}\binom{x}{j})$ according Theorem~\ref{thm:binommult}. Hence $\Psi_d$ is a ring homomorphism, as desired.

For the second part, as before above calculation shows $\widetilde{\Psi}_d$ is multiplicative for those basis vectors it does not send to zero. Thus we just need to check that the span of the $\qbinom{x}{i}$ with $d\nmid i$ forms an ideal. As before take $i \in \mathbb{N}$ such that~$d\nmid i$ and let $j \in \mathbb{N}$ be arbitrary. By Theorem~\ref{thm:qbinommult} we have
\[\qbinom{x}{i}\qbinom{x}{j} = \sum_{k = \mathrm{max}(i,j)}^{i+j}  q^{(k-i)(k-j)} \qbinom{k}{i}_q \qbinom{i}{i+j-k}_q  \qbinom{x}{k}.\]
The $q$-Lucas' theorem tells us that if $d \mid k$ then $\qbinom{k}{i}_q = 0$ modulo~$\Phi_d(q)$ and hence we may rewrite this as a sum over those $k$ with $d \nmid k$, as desired.
\end{proof}

The direct analog of Proposition~\ref{prop:frobcommute} holds in this $q$ a root of unity case.

\begin{proposition} \label{prop:qfrobcommute}
For all $P(x) \in \mathcal{R}$, we have $\Psi_d(S\overline{P(x)}) = S\overline{\Psi_d(P(x))}$.
\end{proposition}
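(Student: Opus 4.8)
The plan is to run exactly the argument used for Proposition~\ref{prop:frobcommute}, now powered by the $q$-Lucas theorem instead of Lucas' theorem. Since $\Psi_d$ is $\mathbb{Z}$-linear, and both the bar involution and the shift operator (on $\mathcal{R}$ and on $\mathcal{R}_q/\Phi_d(q)$) are $\mathbb{Z}$-linear, the two maps $P(x)\mapsto\Psi_d(S\overline{P(x)})$ and $P(x)\mapsto S\overline{\Psi_d(P(x))}$ are both $\mathbb{Z}$-linear. Hence it suffices to check $\Psi_d(S\overline{\binom{x}{k}})=S\overline{\Psi_d(\binom{x}{k})}$ for each $k\in\mathbb{N}$, where the $\binom{x}{k}$ form the $\mathbb{Z}$-basis of $\mathcal{R}$ from Proposition~\ref{prop:binom}.

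First I would compute the left-hand side. The $q:=1$ case of Proposition~\ref{prop:invconsts} (equivalently, the identity $\binom{-x}{k}=(-1)^k\multiset{x}{k}$) gives $\overline{\binom{x}{k}}=(-1)^kS^{k-1}\binom{x}{k}$, so by Equation~(\ref{eq:binommultishift}) we get $S\overline{\binom{x}{k}}=(-1)^kS^k\binom{x}{k}=(-1)^k\sum_{i=0}^{k}\binom{k}{i}\binom{x}{k-i}$. Applying $\Psi_d$ termwise yields $\Psi_d(S\overline{\binom{x}{k}})=(-1)^k\sum_{i=0}^{k}\binom{k}{i}\qbinom{x}{d(k-i)}$ in $\mathcal{R}_q/\Phi_d(q)$.

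Next I would compute the right-hand side. Here $\Psi_d(\binom{x}{k})=\qbinom{x}{dk}$, and Proposition~\ref{prop:invconsts} together with Equation~(\ref{eq:qbinommultishift}) gives $S\overline{\qbinom{x}{dk}}=(-1)^{dk}q^{\binom{dk+1}{2}}S^{dk}\qbinom{x}{dk}=(-1)^{dk}q^{\binom{dk+1}{2}}\sum_{i=0}^{dk}q^{(dk-i)^2}\qbinom{dk}{i}_q\qbinom{x}{dk-i}$. Now reduce modulo $\Phi_d(q)$. The $q$-Lucas theorem (Theorem~\ref{thm:qlucas}) with $n=dk$ (so $n'=k$, $n_0=0$) gives $\qbinom{dk}{i}_q\equiv 0 \mod \Phi_d(q)$ unless $d\mid i$, and $\qbinom{dk}{di'}_q\equiv\binom{k}{i'}\mod\Phi_d(q)$; since $\Phi_d(q)\mid q^d-1$ we have $q^{(dk-di')^2}=(q^d)^{d(k-i')^2}\equiv 1$; and one checks that $(-1)^{dk}q^{\binom{dk+1}{2}}\equiv(-1)^k\mod\Phi_d(q)$, splitting into $d$ odd (where $d\mid\binom{dk+1}{2}$ and $(-1)^{dk}=(-1)^k$) and $d$ even (where $(-1)^{dk}=1$, $\binom{dk+1}{2}\equiv dk/2\mod d$, and $q^{d/2}\equiv-1\mod\Phi_d(q)$ because $\Phi_d(q)\mid q^{d/2}+1$). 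Putting these together, $S\overline{\Psi_d(\binom{x}{k})}\equiv(-1)^k\sum_{i'=0}^{k}\binom{k}{i'}\qbinom{x}{d(k-i')}\mod\Phi_d(q)$, which matches the left-hand side.

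The main obstacle is purely the bookkeeping of powers of $q$ after passing to $\mathcal{R}_q/\Phi_d(q)$: verifying that the scalar prefactor $(-1)^{dk}q^{\binom{dk+1}{2}}$ collapses to $(-1)^k$ and that every exponent $q^{(dk-i)^2}$ becomes $1$. This is the one place where the parity of $d$ must be treated separately (using that $\Phi_d(q)$ divides $q^{d/2}+1$, not $q^{d/2}-1$, when $d$ is even); everything else is the same routine manipulation as in the proof of Proposition~\ref{prop:frobcommute}.
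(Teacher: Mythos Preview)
Your proposal is correct and follows essentially the same route as the paper's own proof: reduce to basis elements by $\mathbb{Z}$-linearity, expand $S\overline{\qbinom{x}{dk}}$ via Proposition~\ref{prop:invconsts} and Equation~(\ref{eq:qbinommultishift}), kill the non-multiples-of-$d$ terms with the $q$-Lucas theorem, and handle the scalar $(-1)^{dk}q^{\binom{dk+1}{2}}$ by the same odd/even split on $d$. The only cosmetic difference is that you compute $\Psi_d(S\overline{\binom{x}{k}})$ explicitly first and then match, whereas the paper computes $S\overline{\Psi_d(\binom{x}{k})}$ and recognizes it as $\Psi_d(S\overline{\binom{x}{k}})$ at the end.
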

\begin{proof}
It suffices to prove that $\Psi_d(S\overline{ \binom{x}{k}}) = S\overline{ \Psi_d(\binom{x}{k})}$ for all $k \in \mathbb{N}$ since both expressions are $\mathbb{Z}$-linear. To that end, using Proposition~\ref{prop:invconsts} and Equation~(\ref{eq:qbinommultishift}), we compute
\begin{align*}
S\overline{ \Psi_d\left(\binom{x}{k}\right)} &= S\overline{ \qbinom{x}{dk}} \\
&= (-1)^{dk} q^{\binom{dk+1}{2}} S^{dk}\qbinom{x}{dk} \\
&= (-1)^{dk} q^{\binom{dk+1}{2}} \sum_{i=0}^{dk} q^{(dk-i)(dk-i)}\qbinom{dk}{i}_q\qbinom{x}{dk-i}.
\end{align*}
We claim that $(-1)^{dk} q^{\binom{dk+1}{2}} = (-1)^k$ modulo~$\Phi_d(q)$. If $d$ is even, then we have~$q^{d/2} = (-1)$ modulo $\Phi_d(q)$ so $(-1)^{dk} q^{\binom{dk+1}{2}} = (-1)^{dk^2+dk+k} = (-1)^k$ modulo~$\Phi_d(q)$, where we use the fact that $dk(k+1)$ is even since $k(k+1)$ is even. If $d$ is odd then $d$ divides $\binom{dk+1}{2}$ so $q^{\binom{dk+1}{2}} = 1$ modulo~$\Phi_d(q)$ and we have~$(-1)^{dk}q^{\binom{dk+1}{2}} = (-1)^{dk} = (-1)^{k}$ modulo~$\Phi_d(q)$. Also note that by the $q$-Lucas' theorem $\qbinom{dk}{i}_q = 0$ modulo $\Phi_d(q)$ unless $d \mid i$. Ignoring the terms that vanish and writing $i = di'$, the above becomes
\[S\overline{ \Psi_d\left(\binom{x}{k}\right)} = (-1)^k \sum_{i'=0}^{k}q^{(dk-di')(dk-di')}\qbinom{dk}{di'}_q\qbinom{x}{d(k-i')}.\]
Now we can use the fact that $q^d=1$ modulo $\Phi_d(q)$ and apply the $q$-Lucas' theorem again to get
\[S\overline{ \Psi_d\left(\binom{x}{k}\right)} = (-1)^k \sum_{i'=0}^{k}\binom{k}{i'}\qbinom{x}{d(k-i')},\]
which is exactly $\Psi_d(S\overline{ \binom{x}{k}})$ according to Proposition~\ref{prop:invconsts} and Equation~(\ref{eq:binommultishift}).
\end{proof}

\end{section}

\begin{section}{Classification of maps into a field} \label{sec:mapstofields}

A basic problem one can pose for any commutative ring is to classify homomorphisms from that ring into fields. This problem is closely related to the problem of classifying the points of the spectrum of the ring, i.e., the prime ideals of the ring. Indeed, the prime ideals of a commuatitive ring are precisely the kernels of maps to fields (although the correspondence is not one-to-one, due to the existence of injective maps between fields).

The maximal ideals (i.e.,~kernels of surjective maps to fields) of $\mathcal{R}$ were classified by Brizolis~\cite{brizolis1976ideals} in 1976: for $p$ a prime and $t \in \mathbb{Z}_p$ a $p$-adic integer, the following set of polynomials (where $\lvert\cdot\rvert_p$ denotes the $p$-adic norm)
\[ M_{p,t} := \{P(x) \in \mathcal{R}\colon \lvert P(t) \rvert_p < 1\}\]
is a maximal ideal of $\mathcal{R}$; moreover, any maximal ideal $M$ of $\mathcal{R}$ is $M=M_{p,t}$ for some $p$ and $t$, and $M_{p,t}=M_{p',t'}$ if and only if $p=p'$ and $t=t'$. An intriguing consequence of this classification is that while $\mathbb{Z}[x]\subsetneq \mathcal{R} \subsetneq \mathbb{Q}[x]$ and~$\mathbb{Z}[x]$ and~$\mathbb{Q}[x]$ both have countably many maximal ideals, $\mathcal{R}$ has uncountably many maximal ideals.

More recently, the first author used a classification the maps~$\mathcal{R}\to \mathbf{k}$ for~$\mathbf{k}$ a field of positive characteristic as part of his investigation of stability properties of the modular representation theory of symmetric groups~\cite{harman2015stability}. We now  extend this classification to the quantum setting. From now on in this section, fix an arbitrary field $\mathbf{k}$. In Theorem~\ref{thm:mapstofields} below we will classify all ring homomorphisms~$\mathcal{R}_q^{+}\to \mathbf{k}$, breaking up the classification into cases of where $q$ is sent. In the process we also classify all homomorphisms~$\mathcal{R}_q\to \mathbf{k}$, which are the same except that we forbid $q:=0$.

As a first source of homomorphisms $\mathcal{R}^{+}_q \to \mathbf{k}$ we have the following ``standard'' evaluation maps: first we specialize $x:=[n]_q$ for some $n \in \mathbb{N}$ to get a homomorphism from $\mathcal{R}^{+}_q$ to~$\mathbb{Z}[q]$ and then we compose with a map from~$\mathbb{Z}[q]$ to~$\mathbf{k}$ defined by sending~$q$ to any~$\kappa \in \mathbf{k}$. Let us call this map $\mathrm{std}_{n,\kappa}\colon \mathcal{R}_q^{+} \to \mathbf{k}$. These standard maps are certainly not all the maps from $\mathcal{R}_q^{+}$ into $\mathbf{k}$, but as we shall see they are ``dense'' in the set of such maps.

To describe all the maps we need some notation. If $\mathrm{char}(\mathbf{k}) = 0$, then we have~$\mathcal{R} \otimes_{\mathbb{Z}} \mathbf{k} \subseteq \mathbf{k}[x]$ in a natural way. Thus we may treat any $P(x) \in \mathcal{R}$ as element of $\mathbf{k}[x]$. In particular, if $t \in \mathbf{k}$ then for every~$m \in \mathbb{N}$ we define~$\binom{t}{m}$ to be the result of evaluating $\binom{x}{m} \in \mathbf{k}[x]$ at $x := t$. On the other hand, suppose for a moment that $\mathrm{char}(\mathbf{k}) = p > 0$. Then $\mathcal{R} \otimes_{\mathbb{Z}} \mathbf{k}$ does not naturally sit inside of~$\mathbf{k}[x]$, so it does not make sense to evaluate $\binom{x}{m}$ at an arbitrary element~$t \in \mathbf{k}$. But if~$t \in \mathbb{Z}_p$ is a $p$-adic integer, then $t$ has a base $p$ expansion $t = t_0 + t_1p + t_2p^2 +\cdots$. Thus, following Lucas' theorem  (Theorem~\ref{thm:lucas}), we can in this case define
\[\binom{t}{m} := \binom{t_0}{m_0}\binom{t_1}{m_1}\cdots\binom{t_k}{m_k} \mod p\]
for every $m \in \mathbb{N}$ with base $p$ expansion $m=m_0 + m_1p + \cdots + m_kp^k$. Note that according to this definition, $\binom{t}{m} \in \mathbb{F}_p$. But $\mathbb{F}_p \subseteq \mathbf{k}$ in a unique way, so we can in fact treat $\binom{t}{m}$ as an element of $\mathbf{k}$.

Now return to considering general $\mathbf{k}$. The last paragraph discussed evaluating binomial coefficient polynomials to obtain elements of $\mathbf{k}$. What about evaluating $q$-binomial coefficient polynomials to obtain elements of $\mathbf{k}$? As long as the denominator~$q^{\binom{m}{2}}[m]_q!$ of $\qbinom{x}{m}$ is not zero when we set~$q:= \kappa \in \mathbf{k}$, the polynomial~$\qbinom{x}{m}_{q:=\kappa}$ is a well-defined element of $\mathbf{k}[x]$. In this case we can of course then define~$\qbinom{t}{m}_{q:=\kappa} \in \mathbf{k}$ to be the result of evaluating $\qbinom{x}{m}_{q:=\kappa} \in \mathbf{k}[x]$ at some element~$x := t \in \mathbf{k}$. So, for a given $m\in\mathbb{N}$, when does $q^{\binom{m}{2}}[m]_q!$ evaluated at~$q:= \kappa \in \mathbf{k}$ equal zero? Exactly under the folloxwing circumstances:
\begin{itemize}
\item $\kappa = 0$ and $m \geq 2$;
\item $\kappa = 1$ and $m \geq p$ where $\mathrm{char}(\mathbf{k}) = p > 0$;
\item $\kappa$ is a primitive $d$th root of unity for some $d > 1$ and $m \geq d$.
\end{itemize}
This claim is easily verified: the $q=0$ and $q=1$ cases are clear; and if $q \neq 1,0$ then~$q^{\binom{m}{2}}[m]_q!$ equals zero if and only if
\[(q-1)^{m-1}q^{-\binom{m}{2}}q^{\binom{m}{2}}[m]_q! = (q^m-1)(q^{m-1}-1)\cdots(q^2-1)\]
also equals zero, which happens precisely when $q$ is a $d$th root of unity for some~$1 < d \leq m$.

Finally, before stating the classification, we observe that thanks to Proposition~\ref{prop:qbinom} a map $\varphi\colon\mathcal{R}_q^{+}\to\mathbf{k}$ is determined by where it sends $q$ and~$\qbinom{x}{m}$ for all~$m \in \mathbb{N}$. And thanks to Proposition~\ref{prop:localization}, a map $\varphi\colon\mathcal{R}_q\to\mathbf{k}$ is determined by this same information as well.

\begin{theorem} \label{thm:mapstofields}
Let $\mathbf{k}$ be a field. Then the ring homomorphisms~$\varphi\colon\mathcal{R}_q^{+}\to\mathbf{k}$ are exactly the following:
\begin{enumerate}
\item $\boldsymbol{q=0}$: For each choice of $k \in \mathbb{N} \cup \{\infty\}$, we have a map $\varphi$ defined by
\begin{align*}
\varphi(q) &:= 0; \\
\varphi\left(\qbinom{x}{m}\right) &:= \begin{cases}1 &\textrm{if $m \leq k$}, \\ 0 &\textrm{otherwise} \end{cases} \qquad \textrm{for each $m \in \mathbb{N}$}.
\end{align*}
\item {\bf $\boldsymbol{q}$ a root of unity}: For each choice of positive integer~$d$, $\omega \in \mathbf{k}$ a primtive $d$th root of unity, $n_0 \in [0,d-1]$, and $t$ either any element of the field $t \in \mathbf{k}$ if~$\mathrm{char}(\mathbf{k}) = 0$ or a $p$-adic integer $t\in\mathbb{Z}_p$ if $\mathrm{char}(\mathbf{k}) = p > 0$, we have a map $\varphi$ defined by
\begin{align*}
\varphi(q) &:= \omega; \\
\varphi\left(\qbinom{x}{m}\right) &:= \binom{\frac{t-n_0}{d}}{m'}\qbinom{n_0}{m_0}_{q:=\omega} \, \parbox{2.5in}{\begin{center}for each $m \in \mathbb{N}$, where $m = dm' + m_0$ with $m_0 \in [0,d-1]$.\end{center}}
\end{align*}
\item {\bf $\boldsymbol{q}$ not zero, not a root of unity}: For each choice of $\kappa \in \mathbf{k}$ not equal to zero and not a root of unity, and $t \in \mathbf{k}$, we have a map $\varphi$ defined by
\begin{align*}
\varphi(q) &:= \kappa; \\
\varphi\left(\qbinom{x}{m}\right) &:= \qbinom{t}{m}_{q:=\kappa} \qquad \textrm{for each $m \in \mathbb{N}$}.
\end{align*}
\end{enumerate}
The maps $\varphi\colon \mathcal{R}_q \to \mathbf{k}$ are the same as the above, except that Case 1 (where $q$ is sent to zero) does not occur.
\end{theorem}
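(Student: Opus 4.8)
The plan is to use the $\mathbb{Z}[q]$-module basis $\{\qbinom{x}{m}\}_{m\in\mathbb{N}}$ of $\mathcal{R}_q^{+}$ (Proposition~\ref{prop:qbinom}) together with the multiplication rule of Theorem~\ref{thm:qbinommult}. A ring homomorphism $\varphi\colon\mathcal{R}_q^{+}\to\mathbf{k}$ is determined by $\kappa:=\varphi(q)$ and the elements $\varphi(\qbinom{x}{m})\in\mathbf{k}$, which must satisfy the relations of Theorem~\ref{thm:qbinommult} with $q$ replaced by $\kappa$, and conversely any such assignment defines a homomorphism; equivalently, $\varphi$ factors through the base change $\bar{\mathcal{R}}_\kappa:=\mathcal{R}_q^{+}\otimes_{\mathbb{Z}[q],\,q\mapsto\kappa}\mathbf{k}$, the free $\mathbf{k}$-module on $\{\qbinom{x}{m}\}$ with structure constants $\alpha_{i,j,k}(\kappa)$. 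So the problem reduces to determining the ring structure of $\bar{\mathcal{R}}_\kappa$ in each of the three regimes for $\kappa$ and then reading off $\mathrm{Hom}_{\mathbf{k}\text{-alg}}(\bar{\mathcal{R}}_\kappa,\mathbf{k})$. Throughout, the $\mathcal{R}_q$ statement follows from the $\mathcal{R}_q^{+}$ one since, by Proposition~\ref{prop:localization}, homomorphisms $\mathcal{R}_q\to\mathbf{k}$ are exactly the homomorphisms $\mathcal{R}_q^{+}\to\mathbf{k}$ with $\varphi(q)\neq 0$ — precisely those outside Case 1.

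The two easy regimes come first. If $\kappa\neq 0$ is not a root of unity, then $\kappa^{\binom{m}{2}}[m]_\kappa!$ is a nonzero element of $\mathbf{k}$ for all $m$, so each $\qbinom{x}{m}$ specializes to a polynomial $\qbinom{x}{m}_{q:=\kappa}\in\mathbf{k}[x]$ of degree $m$ with invertible leading coefficient; I would check that $\qbinom{x}{m}\mapsto\qbinom{x}{m}_{q:=\kappa}$ is a $\mathbf{k}$-algebra isomorphism $\bar{\mathcal{R}}_\kappa\xrightarrow{\sim}\mathbf{k}[x]$ (bijective on bases, and multiplicative because the identity of Theorem~\ref{thm:qbinommult} specializes legitimately at $q=\kappa$), so homomorphisms to $\mathbf{k}$ are evaluations $x\mapsto t$, giving Case 3. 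If $\kappa=0$, evaluating the structure constants of Theorem~\ref{thm:qbinommult} at $q=0$ (using $[n]_0!=1$ and $0^{(k-i)(k-j)}=[k\in\{i,j\}]$) shows $\qbinom{x}{i}\qbinom{x}{j}=\qbinom{x}{\max(i,j)}$ in $\bar{\mathcal{R}}_0$, so the $\qbinom{x}{m}$ form a nested system of idempotents with $\qbinom{x}{0}=1$; a homomorphism sends each to $0$ or $1$, multiplicativity forces $\{m:\varphi(\qbinom{x}{m})=1\}$ to be a downward-closed subset containing $0$, i.e.\ $[0,k]$ or all of $\mathbb{N}$, and every such choice is visibly multiplicative — this is Case 1.

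The root-of-unity regime is the substantive one, and its key identity is the main obstacle. Write $\kappa=\omega$, a primitive $d$th root of unity, so $\varphi$ factors through reduction mod $\Phi_d(q)$. The subcase $d=1$ is exactly the classical classification of homomorphisms $\mathcal{R}\to\mathbf{k}$ (here $\bar{\mathcal{R}}_1=\mathcal{R}\otimes_{\mathbb{Z}}\mathbf{k}$), which I would cite from \cite{harman2015stability}, so assume $d\geq 2$. The key step is the congruence
\[\qbinom{x}{dm'}\,\qbinom{x}{m_0}\ \equiv\ \qbinom{x}{dm'+m_0}\pmod{\Phi_d(q)}\qquad(m'\geq 0,\ 0\leq m_0<d),\]
which I would prove by expanding the left side with Theorem~\ref{thm:qbinommult} and checking, via the $q$-Lucas theorem (Theorem~\ref{thm:qlucas}) and $q^{d}\equiv 1$, that every coefficient except that of $\qbinom{x}{dm'+m_0}$ vanishes modulo $\Phi_d(q)$ while the latter reduces to $1$. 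Granting this, $\bar{\mathcal{R}}_\omega$ is generated by the subalgebra $B$ spanned by $\{\qbinom{x}{dk}\}$ and the subalgebra $D$ spanned by $\{\qbinom{x}{m_0}:0\leq m_0<d\}$, and the congruence shows the multiplication map $B\otimes_{\mathbf{k}}D\to\bar{\mathcal{R}}_\omega$ sends a $\mathbf{k}$-basis to the basis $\{\qbinom{x}{m}\}$, hence is a ring isomorphism. Now $B\cong\mathcal{R}\otimes_{\mathbb{Z}}\mathbf{k}$ via the quantum Frobenius $\Psi_d$, which (after base change) is a split injection with splitting $\widetilde{\Psi}_d$, carrying $\binom{x}{k}$ to $\qbinom{x}{dk}$; and $D=\mathbf{k}[\bar x]$, where $\bar x$ is the image of $x=\qbinom{x}{1}$, which satisfies $\prod_{i=0}^{d-1}(\bar x-[i]_\omega)=\omega^{\binom{d}{2}}[d]_\omega!\,\qbinom{x}{d}=0$ since $[d]_\omega=0$. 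As the $[i]_\omega$ ($0\le i<d$) are pairwise distinct, $D\cong\mathbf{k}[y]/\prod_{i=0}^{d-1}(y-[i]_\omega)\cong\mathbf{k}^{d}$ by the Chinese Remainder Theorem, the $i$th projection sending $\bar x\mapsto[i]_\omega$ and hence $\qbinom{x}{m_0}\mapsto\qbinom{[i]_\omega}{m_0}_{q:=\omega}=\qbinom{i}{m_0}_\omega$ (using $\qbinom{[n]_q}{k}=\qbinom{n}{k}_q$).

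Finally I would assemble these: $\bar{\mathcal{R}}_\omega\cong(\mathcal{R}\otimes_{\mathbb{Z}}\mathbf{k})^{d}$, so $\mathrm{Hom}_{\mathbf{k}\text{-alg}}(\bar{\mathcal{R}}_\omega,\mathbf{k})$ is in bijection with $\mathrm{Hom}_{\mathrm{ring}}(\mathcal{R},\mathbf{k})\times[0,d-1]$, the second coordinate being the index $n_0$ of the chosen factor of $D\cong\mathbf{k}^d$ and the first a classical homomorphism $\chi$. By the cited classification, $\chi$ is evaluation at some $u\in\mathbf{k}$ if $\mathrm{char}\,\mathbf{k}=0$, or at some $u\in\mathbb{Z}_p$ (via the Lucas-expansion definition of $\binom{u}{m'}$) if $\mathrm{char}\,\mathbf{k}=p$. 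Splitting $\qbinom{x}{dm'+m_0}=\qbinom{x}{dm'}\,\qbinom{x}{m_0}$ by the key congruence gives $\varphi(\qbinom{x}{dm'+m_0})=\chi(\binom{x}{m'})\cdot\qbinom{n_0}{m_0}_\omega=\binom{u}{m'}\qbinom{n_0}{m_0}_\omega$; putting $t:=du+n_0$ — legitimate because $d$ is invertible in $\mathbf{k}$, resp.\ in $\mathbb{Z}_p$ (as $p\nmid d$ for $\omega$ a primitive $d$th root) — turns this into exactly the formula of Case 2, with $(u,n_0)\leftrightarrow(t,n_0)$ a bijection and $t$ ranging over $\mathbf{k}$ resp.\ $\mathbb{Z}_p$ as asserted. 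The same structure theorem shows every listed map genuinely is a ring homomorphism (it is the composite of $\mathcal{R}_q^{+}\to\bar{\mathcal{R}}_\omega$ with a projection to a factor and a classical evaluation), so both directions are covered; the only remaining work is the bookkeeping of the $d=1$ subcase and of matching the precise normalization in the statement.
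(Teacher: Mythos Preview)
Your proposal is correct and covers the same ground as the paper's proof, but the Case~2 argument is organized differently, and the difference is worth noting. The paper handles existence of the root-of-unity maps by a density argument: the described $\varphi$ agrees with a standard map $\mathrm{std}_{n,\omega}$ whenever $t$ is an integer congruent to $n_0$ modulo $d$, and since these integers are Zariski-dense (resp.\ $p$-adically dense) the multiplicativity extends to all $t$. For completeness the paper then argues that $x$ together with the image of $\Psi_d$ generate, using exactly your key congruence $\qbinom{x}{k_0}\qbinom{x}{dk'}\equiv\qbinom{x}{k}$. You instead package everything into a single structure theorem $\bar{\mathcal{R}}_\omega\cong B\otimes_{\mathbf{k}}D\cong(\mathcal{R}\otimes_{\mathbb{Z}}\mathbf{k})^d$, with $D\cong\mathbf{k}^d$ coming from the Chinese Remainder Theorem applied to $\prod_{i=0}^{d-1}(x-[i]_\omega)=0$; existence and classification then both fall out of this isomorphism together with the classical $\mathcal{R}\to\mathbf{k}$ result. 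Your route is more conceptual and yields a bonus (an explicit description of $\bar{\mathcal{R}}_\omega$), at the cost of checking that $D$ really is the $d$-dimensional subalgebra $\mathbf{k}[\bar x]$ spanned by $\qbinom{x}{0},\dots,\qbinom{x}{d-1}$ (which does require the $q$-Lucas computation you allude to). The paper's density argument avoids building this isomorphism but is less structural. Cases~1 and~3 are essentially identical in both treatments.
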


\begin{proof}
{\bf Case 1 $\boldsymbol{(q=0)}$}: Not surprisingly, this is the most degenerate case. First let us show that the $\varphi$ described indeed define ring homomorphisms. So let $\varphi$ be as in the statement. If~$k \neq \infty$ then $\varphi = \text{std}_{k,0}$, so it is indeed a homomorphism.  If~$k = \infty$ then for any $P(x)\in \mathcal{R}_q^+$ we have $\varphi(P(x)) = \text{std}_{N,0}(P(x))$ for all sufficiently large~$N$ (how large $N$ needs to be depends on the degree of $P(x)$), which in particular implies $\varphi$ is a homomorphism.

To see these are all ring homomorphisms sending $q$ to zero note that if we specialize the formula in Theorem~\ref{thm:qbinommult} to~$q := 0$ we get that for $j\leq k$ integers:
\[\qbinom{x}{j}\qbinom{x}{k} = \qbinom{x}{k}.\]
Setting $j=k$ we see that $\qbinom{x}{k}$ must get sent to $0$ or $1$ for all $k$.  Then this formula tells us that if we send $\qbinom{x}{j}$ to $0$ then we must send $\qbinom{x}{k}$ to $0$ for all $k >j$.  From there it is easy to see that any such homomorphism must agree with one on our list, as they are completely determined by how many of the $q$-binomial coefficient polynomials get sent to $1$.

{\bf Case 2 ($\boldsymbol{q}$ a root of unity)}: This is really the interesting case. Let us break it into two subcases, based on whether or not $q$ is sent to~$1$.

{\bf Case 2(a) ($\boldsymbol{q}=1$)}: Note that a homomorphism from $\mathcal{R}_q^{+}$ to $\mathbf{k}$ where~$q$ is sent to~$1$ is the same thing as a homomorphism from $\mathcal{R}$ to~$\mathbf{k}$. These are essentially characterized by Brizolis's result, and this formulation of the classification appears in in~\cite{harman2015stability}. For completeness we repeat the argument presented there. 

If $\mathbf{k}$ is in characteristic zero then as we mentioned earlier, the binomial coefficient polynomials naturally sit inside $\mathbf{k}[x]$, so for any point $t \in \mathbf{k}$ we can just evaluate each polynomial at $t$ to get a homomorphism into $\mathbf{k}$. Moreover any ring homomorphism from $\mathcal{R}$ to a field of characteristic zero  is completely defined by the value that $x$ gets sent to, so we get that $\mathrm{Hom}(\mathcal{R},\mathbf{k}) \simeq \mathbf{k}$ and the standard evaluation maps correspond to the copy of $\mathbb{N}$ in $\mathbf{k}$.
 
In characteristic $p$ things are somewhat different as the binomial coefficient polynomials do not naturally sit inside $\mathbf{k}[x]$. Let us look again at Lucas' theorem:
\[\binom{n}{m} \equiv \binom{n_0}{m_0} \binom{n_1}{m_1} \binom{n_2}{m_2} \dots \binom{n_k}{m_k}  \mod p.\]
For fixed $m$ this formula only depends on the first $k$ base $p$ digits of $n$. Hence we can naturally evaluate modulo $p$ the binomial coefficient polynomials at any $p$-adic integer $t$ (since they still have a base $p$ expansion) and obtain evaluation maps $\mathrm{ev}_t: \mathcal{R} \rightarrow \mathbb{F}_p$ for each $t \in \mathbb{Z}_p$. These $\mathrm{ev}_t$ are exactly the~$\varphi$ described in the statement. The following lemma completes the characterization in this case.

\begin{lemma}\label{specr}
Any homomorphism $\varphi: \mathcal{R} \rightarrow \mathbf{k}$ of rings from $\mathcal{R}$ into a field $\mathbf{k}$ of characteristic $p$ factors as $\mathrm{ev}_t$ for some $t \in \mathbb{Z}_p$, followed by the inclusion of $\mathbb{F}_p$ into $\mathbf{k}$.

\end{lemma}

\begin{proof}[Proof of lemma] First consider maps $\varphi$ to $\mathbb{F}_p$. Let $m = m_0 + m_1p + \dots + m_kp^k$ be the base $p$ expansion of a positive integer $m$.  Lucas' theorem tells us that the polynomial
\[ F(x) =  \frac{\binom{x}{m} - \binom{x}{m_0} \binom{\binom{x}{p}}{m_1} \binom{\binom{x}{p^2}}{m_2} \dots \binom{\binom{x}{p^k}}{m_k}}{p}\]
is integer-valued, and hence $pF(x)$ gets sent to zero under $\varphi$.  This implies that the images of $\binom{x}{m}$ for all $m$ are determined by the images of $x, \binom{x}{p}, \binom{x}{p^2}, \dots$  We may then interpret these values as the base $p$ digits of some $t \in \mathbb{Z}_p$ and conclude that $\varphi = \mathrm{ev}_t$ since they agree on a basis for $\mathcal{R}$.

To see any map into an arbitrary field of characteristic $p$ must factor through a map to~$\mathbb{F}_p$ note that for any $P(x) \in \mathcal{R}$ the image under $\varphi$ of $P(x)^p - P(x)$ in~$\mathbf{k}$ is~$p$ times the image of $\frac{P(x)^p-P(x)}{p} \in \mathcal{R}$. Therefore the image of $P(x)$ is fixed by the Frobenius map and hence is in $\mathbb{F}_p$. 
\end{proof}

{\bf Case 2(b) ($\boldsymbol{q}$ a primitive $\boldsymbol{d}$th root of unity, $\boldsymbol{d>1}$)}:  First note that if~$\mathrm{char}(\mathbf{k}) = p >0$ then necessarily $p \nmid d$ and thus $\frac{t-n_0}{d} \in \mathbb{Z}_p$ since $d$ is a unit in~$\mathbb{Z}_p$. So the term $\binom{\frac{t-n_0}{d}}{m'}$ is well-defined according to our earlier definition.

Now note that if $t = n$ is an integer congruent to $n_0$ modulo $d$ then the $q$-Lucas theorem tells us that the map $\varphi$ described is the standard map~$\mathrm{std}_{n,\omega}$ which we know to be a ring homomorphism. It then follows that the (a priori just linear) map $\varphi$ is a ring homomorphism for all appropriate values of $t$, as the definition of the map varies algebraically in $t$ and the set of nonnegative integers congruent to $n_0$ modulo $d$ is dense with respect to the Zariski topology on $\mathbf{k}$ in characteristic zero, as well as with respect to the $p$-adic topology on $\mathbb{Z}_p$.

We need to show that these are all such homomorphisms. Note that any homomorphism from $\mathcal{R}^{+}_q$ to $\mathbf{k}$ sending $q$ to a primitive $d$th root of unity must send~$x \in \mathcal{R}^{+}_q$ to one of $d$ possible values: $[0]_q, [1]_q, \dots, \text{\ or \ } [d-1]_q$. This is since the following identity holds in $\mathcal{R}^{+}_q$:
\[x(x-[1]_q)(x-[2]_q)\dots(x-[d-1]_q) = [d]_q! q^{\binom{d}{2}} \qbinom{x}{d}.\]
We see that indeed if we send $q$ to a $d$th root of unity the right hand side vanishes and therefore we are forced to send $x$ to one of these values in order for the left hand side to vanish.  This value where $x$ gets sent corresponds to the discrete parameter $n_0$ in the statement of the theorem.

Next, we note that if we precompose our map into a field with the quantum Frobenius map $\Psi_d: \mathcal{R} \to \mathcal{R}^{+}_q / {\Phi_d(q)}$ then we obtain a homomorphism from~$\mathcal{R}$ into~$\mathbf{k}$ and may apply the $q=1$ classification. This map from $\mathcal{R}$ into $\mathbf{k}$ corresponds to the choice of $t$ (where the labeling variable $t$ is shifted by the invertible map $t\mapsto \frac{t-n_0}{d}$).

So in order to complete the classification in this case we just need to show that any such homomorphism into a field is completely determined by the data of where $x$ gets sent, and its restriction to the image of $\mathcal{R}$ under $\Psi_d$. So it is enough to show that we can express $\qbinom{x}{k}$ as a polynomial of $x$ and elements of the image of $\Psi_d$, with coefficients in $\mathbb{Z}[q]$ localized at $\Phi_d(q)=0$. Write $k = dk'+k_0$ with $k_0 \in [0,d-1]$.

If $k'=0$ then the usual formula for $\qbinom{x}{k}$ has denominator relatively prime to $\Phi_d(q)$, and hence it can be expressed just in terms of $x$ after localizing. Otherwise, consider the product:
\[\qbinom{x}{k_0}\qbinom{x}{dk'} = \sum_{i = dk'}^{k}\qbinom{x}{i} \qbinom{i}{dk'}_q \qbinom{dk'}{k-i}_q q^{(i-k_0)(i-dk')}.\]
The $q$-Lucas' theorem tells us that the $\qbinom{dk'}{k-i}_q$ term is zero unless $i = k$. Simplifying this remaining term using the $q$-Lucas' theorem we obtain:
\[\qbinom{x}{k_0}\qbinom{x}{dk'}= \qbinom{x}{k}.\]
So we see that indeed we can express $\qbinom{x}{k}$ in terms of $x$ and elements of the image of $\Psi_d$, finishing this case of the classification.

{\bf Case 3 ($\boldsymbol{q}$ not zero, not a root of unity)} Since the denominators of the $q$-binomial coefficient polynomials are products of $q$ and cyclotomic polynomials in $q$ it is clear that $R_q^+$ localized away from $q$ being zero or a root of unity is just a polynomial ring in $x$ over $\mathbb{Z}[q]$ localized away from $q$ being zero or a root of unity. Hence such maps to fields are just given by a (nonzero, non-root of unity) choice $\kappa$ of where to send $q$, and a choice $t$ of where to send $x$.
\end{proof}

\end{section}

\begin{section}{Open questions and future directions} \label{sec:future}

In this section we discuss some open questions and possible future directions in the investigation of the ring~$\mathcal{R}_q$.

\subsection{A dilation operator}
For any integer $m \geq 1$, consider the \emph{dilation operator} $D_m \colon \mathbb{Q}(q)[x]\to \mathbb{Q}(q)[x]$ given by
\[D_m(x) := \frac{ ((q-1)x+1)^m-1}{q-1} = \sum_{i=1}^{m}\binom{m}{i}(q-1)^mx^m\]
and extended $\mathbb{Q}(q)$-linearly. This operator may be easier to understand in the ``$z$-variable'' formulation discussed in Section~\ref{sec:relrings}: with respect to this generator it is defined by $D_m(z) := z^m$. This operator is defined so that for $P(x) \in \mathbb{Q}(q)[x]$ and $n \in \mathbb{Z}$ we have $D_mP([n]_q) = P([mn]_q)$. Thus $D_m$ restricts to a $\mathbb{Z}[q,q^{-1}]$-linear homomorphism $D_m\colon \mathcal{R}_q\to \mathcal{R}_q$. We can of course ask how~$D_m$ acts on the basis of $q$-binomial coefficient polynomials. That is, let us try to understand the coefficients $\delta_{m,i,k}(q) \in \mathbb{Z}[q,q^{-1}]$ when we write
\[D_m\qbinom{x}{i} = \sum_{k=0}^{\infty} \delta_{m,i,k}(q)\qbinom{x}{k}.\]
These coefficients $\delta_{m,i,k}(q)$ are the first time that ``positivity'' seems to fail for~$\mathcal{R}_q$. Note that~$D_m$ is a map of degree~$m$ in~$x$. Thus~$\delta_{m,i,mi}(q) \neq 0$. However, when we specialize~$q:=1$, this map $D_m$ becomes a map of degree one: namely,~$D_m(x) := mx$. And so we have~$(q-1) \mid \delta_{m,i,k}(q)$ for all $k > i$. More generally by the same reasoning we have~$(q-1)^{\lfloor (k-1)/i \rfloor} \mid \delta_{m,i,k}(q)$. At any rate we certainly do not have that~$\delta_{m,i,k}(q) \in \mathbb{N}[q]$. But even accounting for this predictable power of $(q-1)$, positivity for these $\delta_{m,i,k}(q)$ can apparently fail for other reasons. For example, computation with Sage mathematical software~\cite{sage} tells us that
\begin{gather*}
D_2\qbinom{x}{3} = (q + 1)(q^2 + 1)\qbinom{x}{2} + q(q + 1)(q^2 + 1)(q^5 + q^3 + q^2 - 1)\qbinom{x}{3} \\
+q^5(q - 1)(q + 1)(q^2 + 1)(q^2 + q + 1)(q^4 + q^2 + q + 1)\qbinom{x}{4} \\
+q^7(q - 1)^2(q + 1)^2(q^2 - q + 1)(q^2 + 1)(q^2 + q + 1)(q^4 + q^3 + q^2 + q + 1)\qbinom{x}{5}\\
+q^{12}(q - 1)^3(q + 1)^2(q^2 - q + 1) (q^2 + 1) (q^2 + q + 1) (q^4 + q^3 + q^2 + q + 1)\qbinom{x}{6}.
\end{gather*}
The fact that $\delta_{2,3,3}(q) \notin \mathbb{N}[q]$ is especially troubling.

It is worth contrasting the above discussion with the fact that we do have positivity for the coefficients $\delta_{m,i,k}(1)$ when we specialize $q:=1$. To this end, we observe the (undoubtedly folklore result) that
\begin{equation} \label{eq:binomsplit}
\binom{xy}{i} = \sum_{j,k=0}^{\infty} \partial_{j,k}^{i} \binom{x}{j}\binom{y}{k}
\end{equation}
where $\partial_{j,k}^{i}$ is the number of $j \times k$ $(0,1)$-matrices with exactly $i$ ones and no row or column of all zeros. Thanks to Lemma~\ref{lem:interpolation}, equation~(\ref{eq:binomsplit}) can be proved by taking $x:=n,y:=m$ with $n,m \in \mathbb{N}$ and interpreting both sides as the number of subsets of size $i$ of the set~$\{(a,b)\colon a \in \{1,\ldots,n\}, b \in\{1,\ldots,m\}\}$: it is obvious why the left-hand side counts these subsets; the right-hand side counts these subsets by grouping them according to their projections to the first and second components. We can specialize $y:=m$ in~(\ref{eq:binomsplit}) and conclude that
\[\delta_{m,i,k}(1) = \sum_{j=0}^{\infty} \partial_{j,k}^{i} \binom{m}{j}.\]
So in particular $\delta_{m,i,k}(1) \in \mathbb{N}$. Note that unlike other coefficients studied in this paper, there does not appear to be any simple product formula for the~$\partial_{j,k}^{i}$. Therefore, even in this $q=1$ case where we have positivity for these dilation coefficients, computing~$\delta_{m,i,k}(1)$ seems hard.

\subsection{Intersection of \texorpdfstring{$\mathcal{R}_q^{+}$}{Rq+} and \texorpdfstring{$\mathcal{R}_q^{-}$}{Rq-}}

What can we say about the ring $\mathcal{R}_{q}^{+} \cap \mathcal{R}_{q}^{-}$? Note that $\mathcal{R}_{q}^{+} \cap \mathcal{R}_{q}^{-}$ is naturally only a $\mathbb{Z}$-algebra, that is, a ring. Of course $\mathbb{Z} \subseteq \mathcal{R}_{q}^{+} \cap \mathcal{R}_{q}^{-}$. But we can say more: by Proposition~\ref{prop:invconsts} we at least have $\qbinom{x}{k}, \overline{\qbinom{x}{k}} \in \mathcal{R}_{q}^{+} \cap \mathcal{R}_{q}^{-}$ for all~$k \in \mathbb{N}$. It seems unlikely that $\mathcal{R}_{q}^{+} \cap \mathcal{R}_{q}^{-} = \mathrm{Span}_{\mathbb{Z}}\{\qbinom{x}{k},\overline{\qbinom{x}{k}}\colon k \in \mathbb{N}\}$ as a~$\mathbb{Z}$-module . Indeed, it is not even clear that the set~$\{\qbinom{x}{k},\overline{\qbinom{x}{k}}\colon k \in \mathbb{N}\}$ generates~$\mathcal{R}_{q}^{+} \cap \mathcal{R}_{q}^{-}$. Can it be shown that $\mathcal{R}_{q}^{+} \cap \mathcal{R}_{q}^{-}$ is not a free $\mathbb{Z}$-module? In general this ring~$\mathcal{R}_{q}^{+} \cap \mathcal{R}_{q}^{-}$ seems quite mysterious to us. More generally, for any~$m,m' \in \mathbb{Z}$ we can ask what the intersection~$\mathcal{R}_{q}^{+,m} \cap \mathcal{R}_{q}^{-,m'}$ looks like. Recall that the ring~$\mathcal{R}_q^{+,m}$ is defined in Section~\ref{sec:shift}.

\subsection{A Hopf algebra?}

The polynomial ring $\mathbb{Q}[x]$ can be given the structure of a Hopf algebra over~$\mathbb{Q}$ by defining the comultiplication as $\Delta(x^n) = \sum_{k=0}^{n}\binom{n}{k}x^{k}\otimes x^{n-k}$, the counit as~$\varepsilon(x^n) = \begin{cases} 1 &\textrm{if $n=0$},\\ 0 &\textrm{otherwise}.\end{cases}$, and the antipode as~$x^n \mapsto (-1)^n x^n$. With this coalgebra structure on $\mathbb{Q}[x]$, we have for all $k \in \mathbb{N}$ that
\begin{align*}
\Delta\left( \binom{x}{k} \right) &= \sum_{i=0}^{k}\binom{x}{i}\otimes\binom{x}{k-i}; \\
\varepsilon\left(\binom{x}{k} \right) &= \begin{cases} 1 &\textrm{if $k=0$},\\ 0 &\textrm{otherwise}.\end{cases}
\end{align*}
The above formulae define a coalgebra structure on~$\mathcal{R}$ which is called the ``divided power coalgebra'' (see~\cite[Example 1.1.4(2)]{dascalescu2001hopf}). In particular the Hopf algebra structure on~$\mathbb{Q}[x]$ restricts to a Hopf algebra structure on~$\mathcal{R}$ (which is, however, now a Hopf algebra over $\mathbb{Z}$, not $\mathbb{Q}$). Note that the antipode for~$\mathcal{R}$ viewed as a Hopf algebra in this way is the ``bar involution'' (specialized to~$q:=1$, of course). It would be interesting to define a Hopf algebra structure on~$\mathcal{R}_q$ for which the bar involution is the antipode. One immediate issue is that~$\mathcal{R}_q$ is naturally a~$\mathbb{Z}[q,q^{-1}]$-algebra, but the bar involution is not a  $\mathbb{Z}[q,q^{-1}]$-linear map: rather, it ``twists'' the coefficient ring. Perhaps there is some way to relax the conditions of a Hopf algebra to only require the antipode be a semi-linear map.

\begin{subsection}{Maximal ideals of \texorpdfstring{$\mathcal{R}_q$}{Rq}}
As mentioned at the beginning of Section~\ref{sec:mapstofields}, Brizolis~\cite{brizolis1976ideals} offered a very nice classification of the maximal ideals of~$\mathcal{R}$. Considering the classification of maps from $\mathcal{R}_q$ into a field we provide (Theorem~\ref{thm:mapstofields}), one might be optimistic that we could find a similar classification of maximal ideals of $\mathcal{R}_q$. As we explained earlier, this would amount to determining when a map from $\mathcal{R}_q$ to a field is surjective. Note that a consequence of Brizolis's classification is that if $\mathbf{k}$ is a field and $\varphi\colon \mathcal{R} \to \mathbf{k}$ is surjective, then $\mathbf{k} = \mathbb{F}_p$ for some prime~$p$. In particular there is no surjective map from $\mathcal{R}$ to a field of charactersitic zero. In contrast,~$\mathcal{R}_q$ actually does have surjective maps to fields of characteristic zero. For example, consider the map~$\varphi\colon \mathcal{R}_q\to\mathbb{Q}$ defined by $\varphi(q) := \frac{1}{2}$ and $\varphi(x) := 2$. It turns out that~$\varphi$ is surjective. Indeed, observe that for any $k \in \mathbb{N}$,
\begin{align*}
\varphi\left(q^{\binom{k}{2}+1}\qbinom{x}{k}\right) &= \left(\frac{1}{2}\right)^{\binom{k}{2}+1} \, \frac{2(2-1)(2-\frac{3}{2})\cdots(2-\frac{2^k-1}{2^{k-1}})}{\left(\frac{1}{2}\right)^{\binom{k}{2}}\cdot 1 \cdot \frac{3}{2}\cdot\frac{7}{4}\cdots\frac{2^k-1}{2^{k-1}}} \\
&= \prod_{i=1}^{k} \frac{1}{2^i-1}.
\end{align*}
Then note that for any prime $p$, there is some $k$ such $p\mid 2^k-1$, just because~$2$ has to have some mutliplicative order in $\mathbb{F}_p$. Thus we see that $\frac{1}{p}$ belongs to the image~$\varphi(\mathcal{R}_q)$ for every prime $p$. But if $\frac{1}{p} \in \varphi(\mathcal{R}_q)$ for all primes $p$, then clearly~$\varphi(\mathcal{R}_q)=\mathbb{Q}$ as claimed. Considering the fact that some maps~$\varphi\colon \mathcal{R} \to \mathbb{Q}$ are surjective, while others are certainly not (such as any with~$\varphi(q)=1$), it seems that the general problem of determining when a map from~$\mathcal{R}_q$ to a field is surjective could involve some delicate number theory. Thus while it would certainly be interesting to classify all maximal ideals of~$\mathcal{R}_q$, we doubt that there is as nice a classification as Brizolis's classification of maximal ideals of~$\mathcal{R}$.
\end{subsection}

\end{section}

\printbibliography

\end{document}